\newcommand{\NN}{\mathbb{N}}
\newcommand{\RR}{\mathbb{R}}
\newcommand{\Sbb}{\mathbb{S}}
\newcommand{\BRT}{B_r\cap T_pS}
\newcommand{\Om}{\Omega}
\newcommand{\om}{\omega}
\newcommand{\pa}{\partial}
\newcommand{\al}{\alpha}
\newcommand{\si}{\sigma}
\newcommand{\ga}{\gamma}
\newcommand{\zi}{\zeta}
\newcommand{\de}{\delta}
\newcommand{\ep}{\varepsilon}
\newcommand{\lam}{\lambda}
\newcommand{\ints}{\int\limits}
\newtheorem{theorem}{Theorem}[section]
\newtheorem{corollary}[theorem]{Corollary}
\newtheorem{lemma}[theorem]{Lemma}
\newtheorem{proposition}[theorem]{Proposition}
\newtheorem{remark}[theorem]{Remark}
\newtheorem{thmx}{Theorem}
\theoremstyle{remark}
\DeclareMathOperator{\diver}{\mathrm{div}}
\DeclareMathOperator{\dist}{\mathrm{dist}}
\DeclareMathOperator{\osc}{\mathrm{osc}}
\DeclareMathOperator{\diam}{\mathrm{diam}}
\newcommand{\oscH}{\osc(H)}
\renewenvironment{proof}{
  \noindent{\it Proof.}\ }{\hspace*{\fill}
  \begin{math}\Box\end{math}\medskip}
\title[A quantitative Alexandrov's theorem]{A sharp quantitative version of Alexandrov's theorem via the method of moving planes}
\author{Giulio Ciraolo, Luigi Vezzoni}
\date{\today}
\address{Dipartimento di Matematica e Informatica, Universit\`a di Palermo, Via Archirafi 34, 90123, Italy} \email{giulio.ciraolo@unipa.it}
\address{Dipartimento di Matematica G. Peano, Universit\`a di Torino, Via Carlo Alberto 10, 10123 Torino, Italy.} \email{luigi.vezzoni@unito.it}
\keywords{Alexandrov Soap Bubble Theorem, method of moving planes, stability, mean curvature, pinching.}
    \subjclass{Primary 35B50, 35B51; Secondary 53C20, 53C21}
\begin{document}

\maketitle

\begin{abstract}
We prove the following quantitative version of the celebrated \emph{Soap Bubble Theorem} of Alexandrov. 
Let $S$ be a $C^2$ closed embedded hypersurface of $\RR^{n+1}$, $n\geq1$, and denote by $\oscH$ the oscillation of its mean curvature. We prove that there exists a positive $\ep$, depending on $n$ and upper bounds on the area and the $C^2$-regularity of $S$, such that if $\oscH \leq \ep$ then there exist two concentric balls $B_{r_i}$ and $B_{r_e}$ such that $S \subset \overline{B}_{r_e} \setminus B_{r_i}$ and $r_e -r_i \leq C \oscH$, with $C$ depending only on $n$ and upper bounds on the surface area of $S$ and the $C^2$ regularity of $S$. Our approach is based on a quantitative study of the method of moving planes and the quantitative estimate on $r_e-r_i$ we obtain is optimal. 

As a consequence of this theorem, we also prove that if $\oscH$ is small then $S$ is diffeomorphic to a sphere and give a quantitative bound which implies that $S$ is $C^1$-close to a sphere. 


\end{abstract}

\tableofcontents

\section{Introduction} \label{section introd}
The {\it Soap Bubble Theorem} proved by Alexandrov in \cite{Al1} has been the object of many investigations. In its simplest form it states that

\begin{center}

\emph{The $n$-dimensional sphere is the only compact connected embedded hypersurface of $\RR^{n+1}$}

\emph{with constant mean curvature.}

\end{center}

\noindent As it is well-known, the embeddedness condition is necessary, as implied by the celebrated counterexamples by Hsiang-Teng-Yu \cite{HTY} and Wente \cite{We}. There have been several extensions of the rigidity result of Alexandrov to more general settings. Alexandrov proved this Theorem in a more general setting; in particular, the Euclidean space can be replaced by any space of constant curvature (see also \cite{Al2} where he discussed several possible generalizations).  Montiel and Ros \cite{MR} and Korevaar \cite{Ko} studied the case of hypersurfaces with constant higher order mean curvatures embedded in space forms. Alexandrov Theorem has been studied also for warped product manifolds by Montiel \cite{Mo},  Brendle \cite{Br} and Brendle and Eichmair \cite{BE}. There are many other related results; the interested reader can refer to \cite{CFSW,CFMN,CY,DCL,HY,Re,Ro1,Ro2,Ya} and references therein.

To prove the Soap Bubble Theorem, Alexandrov introduced the \emph{method of moving planes}, a very powerful technique which has been the source of many insights in analysis and differential geometry. Serrin understood that the method can be applied to Partial Differential Equations. Indeed, in his seminal paper \cite{Se} he obtained a symmetry result for the torsion problem which gave rise to a huge amount of results for overdetermined problems (the interest reader can refer to the references in \cite{CMS}). In \cite{GNN} Gidas, Ni and Nirenberg refined Serrin's argument to obtain several symmetry results for positive solutions of second order elliptic equations in bounded and unbounded domains (see also \cite{Li1} and \cite{Li2}). The method was further employed by Caffarelli, Gidas and Spruck \cite{CGS} to prove asymptotic radial symmetry of positive solutions for the conformal scalar curvature equation and others semilinear elliptic equations (see also \cite{KMPS}). The moving planes were also used to obtain several celebrated results in differential geometry: Schoen \cite{Sc} characterized the catenoid, Meeks \cite{Me} and Korevaar, Kusner and Solomon \cite{KKS} showed that a complete connected properly embedded constant mean curvature surface in the Euclidean space with two annuli ends is rotationally symmetric. There is a large amount of other interesting papers on these topics which are not mentioned here.

Alexandrov's proof in the Euclidean space works as follows: (i) show that for any direction $\om$ there exists a critical hyperplane orthogonal to $\om$ which is of symmetry for the surface $S$; (ii) since the center of mass $\mathcal{O}$ of $S$ lies on each hyperplane of symmetry, then every hyperplane passing through $\mathcal{O}$ is of reflection symmetry for $S$; (iii) since any rotation about $\mathcal{O}$ can be written as a composition of $n+1$ reflections, then $S$ is rotationally invariant, which implies that $S$ is the $n$-dimensional sphere.
The crucial step in this proof is (i), which is obtained by applying the method of moving planes and using maximum principle (see Theorem A in Subsection \ref{subsection Alexandrov}).

In this paper we study a quantitative version of the Soap Bubble Theorem, that is we assume that the oscillation of the mean curvature $\osc(H)$ is \emph{small} and we prove that $S$ is \emph{close to a sphere}. 
More precisely, let $S$ be an $n$-dimensional, $C^2$-regular, connected, closed hypersurface embedded in $\RR^{n+1}$, and denote by $|S|$ the area of $S$. Since $S$ is $C^2$ regular, then it satisfies a uniform touching sphere condition of (optimal) radius $\rho$.
We orientate $S$ according to the inner normal. Given $p \in S$, we denote by $H(p)$ the mean curvature of $S$ at $p$, and we let 
$$\oscH = \max_{p\in S} H(p) - \min_{p\in S} H(p).$$

\medskip
Our main result is the following theorem.

\begin{theorem}\label{main}
Let $S$ be an $n$-dimensional, $C^2$-regular, connected, closed hypersurface embedded in $\RR^{n+1}$. There exist constants $\ep,\, C>0$ such that if
\begin{equation}\label{H quasi const}
\oscH \leq \ep,
\end{equation}
then there are two concentric balls $B_{r_i}$ and $B_{r_e}$ such that
\begin{equation}\label{Bri Om Bre}
S \subset \overline{B}_{r_e} \setminus B_{r_i},
\end{equation}
and
\begin{equation}\label{stability radii}
r_e-r_i \leq C \oscH.
\end{equation}
The constants $\ep$ and $C$ depend only on $n$ and upper bounds on $\rho^{-1}$ and $|S|$. 
\end{theorem}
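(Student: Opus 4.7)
The plan is to develop a quantitative counterpart of Alexandrov's method of moving planes. For every unit direction $\om\in\Sbb^n$, introduce the family of hyperplanes $\pi_\lam^\om=\{x\in\RR^{n+1}:x\cdot\om=\lam\}$ and slide them from $+\infty$ until the first critical position $\lam^*(\om)$ at which either the reflected cap becomes internally tangent to $S$, or $S$ becomes orthogonal to $\pi_{\lam^*}^\om$ at some boundary point. In the classical proof one then gets equality between $S$ and its reflection on the cap; here I expect to prove that they are at distance $\leq C\,\oscH$, uniformly. The constant will depend only on $n$, $\rho^{-1}$, and $|S|$, via the touching-sphere radius $\rho$ provided by $C^2$-regularity.

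First, I would locally write the cap $S_{\lam^*}^+$ and its reflection $S_{\lam^*}^-$ as graphs $u$, $u^*$ over pieces of $\pi_{\lam^*}^\om$, so that the mean-curvature equation $H=H(p)$ becomes a quasilinear uniformly elliptic equation whose uniform ellipticity is controlled by the touching-sphere estimate. Subtracting the equations for $u$ and $u^*$, the difference $w=u-u^*\geq 0$ satisfies a linear second-order elliptic equation of the form $L w = H(p)-H(\bar p)$, and therefore
\begin{equation*}
|L w|\leq \oscH.
\end{equation*}
The classical strong maximum principle is to be replaced by a quantitative Harnack inequality in the interior, and by a quantitative Hopf boundary-point lemma on the set where the cap meets $\pi_{\lam^*}^\om$ transversally. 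Using the uniform touching sphere of radius $\rho$ as a family of barriers, I would chain these estimates along $S$ (using that $|S|$ is bounded, so only a uniformly bounded number of overlapping coordinate patches are needed) to obtain the key local estimate
\begin{equation*}
\mathrm{dist}\bigl(p,R_{\lam^*(\om)}^\om(S)\bigr)\leq C\,\oscH
\qquad\text{for every }p\in S,
\end{equation*}
where $R_{\lam^*(\om)}^\om$ is the reflection across $\pi_{\lam^*(\om)}^\om$.

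Next, I would use this approximate symmetry in $n+1$ linearly independent directions $\om_1,\dots,\om_{n+1}$: the critical hyperplanes intersect in a single point $\OO\in\RR^{n+1}$, which plays the role of the center of mass in step (ii) of Alexandrov's argument. Combining the $n+1$ approximate reflections, writing any rotation about $\OO$ as a product of at most $n+1$ such reflections, and tracking the errors linearly, I would conclude that $S$ is invariant up to an error $C\,\oscH$ under the full rotation group about $\OO$. Therefore $r_e-r_i\leq C\,\oscH$ where $r_e,r_i$ are respectively $\max_{p\in S}|p-\OO|$ and $\min_{p\in S}|p-\OO|$, which yields \eqref{Bri Om Bre} and \eqref{stability radii}.

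The main obstacle is the quantitative step. The standard stability estimates for overdetermined problems often give only Hölder rates $\oscH^\al$, whereas the theorem claims the optimal linear rate. Obtaining the sharp exponent requires a careful Hopf-type boundary estimate near transversal tangency points, uniform in the position of $\pi_{\lam^*}^\om$, and a control on how many covering patches are needed to chain the Harnack inequality along the whole surface; both controls must be phrased purely in terms of $\rho^{-1}$ and $|S|$. A further subtle point is guaranteeing that the moving planes procedure can actually be started and reaches a non-degenerate critical position under the smallness assumption \eqref{H quasi const}, which I expect to follow from the touching-sphere condition together with an a priori diameter bound depending on $|S|$ and $\rho^{-1}$.
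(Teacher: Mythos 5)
Your overall strategy is exactly the one the paper follows: quantitative moving planes, with Harnack, Carleson and a quantitative Hopf lemma replacing the strong and boundary maximum principles, a chaining argument whose length is controlled by $|S|$ and $\rho$, and then $n+1$ directions producing an approximate center $\OO$, from which $r_e-r_i\leq C\,\oscH$ follows as in Aftalion--Busca--Reichel. As written, however, the plan has genuine gaps beyond technical elaboration. First, the local set-up is not the right one: near the tangency point $p_0$ the common normal need not be close to $\pm\om$, so the reflected cap and $S$ are in general not graphs over the critical hyperplane $\pi_m$, and even where they are, the gradient of such graphs (hence the ellipticity constants of the linearized operator $L$) is not controlled by the touching-sphere radius. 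The correct base plane is the common tangent plane $T_{p_0}S$, over which one has $|\nabla u|\leq M$ on a ball of radius comparable to $\rho$, and this is what makes $L$ uniformly elliptic with constants depending only on $\rho$.

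Second, the chaining is not just a covering argument. To pass from one patch to the next you must re-write both $S$ and the reflected cap as graphs over a new tangent plane, and for this you need closeness of the normal vectors, i.e. a $C^1$ bound on the difference, not merely the sup bound that Harnack gives; this forces you to combine Harnack with interior gradient (Schauder-type) estimates and a comparison of normals, to shrink the radius slightly at each step, and to bound the number of steps through a piecewise-geodesic argument in terms of $|S|$ and $\rho$ --- this is precisely where the smallness assumption $\oscH\leq\ep$ enters and where the linear (rather than H\"older) rate is produced. Third, and most seriously, the region near $\pi_m$ is left entirely open in your plan: to apply a Carleson/boundary-Harnack or quantitative Hopf estimate there, the graph domains must have a boundary portion with controlled $C^2$ geometry, i.e. quantitative transversality of $S\cap\pi_m$ and curvature bounds for its projection onto the tangent plane; in the paper this is obtained only after the interior estimate is established (which forces $\nu\cdot\om$ to be small on the boundary of the cap), together with propositions on curvatures of plane sections and of projections and a connectivity statement for the inner part of the cap, and the proof then splits into four cases according to the positions of $p_0$ and $p$. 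Finally, the concluding step is not literally a composition of rotations: what one actually proves is that the critical hyperplane in every direction passes within $C\,\oscH$ of $\OO$, and then $r_e-r_i\leq 2\,\dist(\OO,\pi)$ for the critical hyperplane $\pi$ in the direction of $p_e-p_i$.
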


Under the assumption that $S$ bounds a convex domain, there exist some results in the spirit of Theorem \ref{main} which are available in literature. In particular, when the domain is an ovaloid, the problem was studied by Koutroufiotis \cite{Kou},  Lang \cite{La} and Moore \cite{Moo}. Other stability results can be found in Schneider \cite{Sc} and Arnold \cite{Ar}. These results were improved by Kohlmann in \cite{Kol} where he proved an explicit H\"{o}lder type stability in \eqref{stability radii}. In Theorem \ref{main}, we do not consider any convexity assumption and we obtain the optimal rate of stability in \eqref{stability radii}, as can be proven by a simple calculation for ellipsoids. 

Theorem \ref{main} has a quite interesting consequence which we explain in the following. It is well-known (see for instance \cite{Gr}) that if every principal curvature $\kappa_i$ of $S$ is pinched between two positive numbers, i.e.
$$ 
\frac{1}{r} \leq \kappa_i \leq (1+\delta) \frac{1}{r}, \quad i=1,\ldots,n,
$$
then $S$ is close to a sphere of radius $r$. Following Gromov \cite[Remark (c), p.67--68]{Gr}, one can ask what happens when only the mean curvature is pinched. We have the following result.

\begin{corollary}\label{main2}
Let $\rho_0,A_0>0$ and $n \in \NN$ be fixed. There exists a positive constant $\ep$, depending on $n$, $\rho_0$ and $A_0$, such that if $S$ is a connected closed $C^2$ hypersurface embedded in $\RR^{n+1}$ with $|S|\leq A_0$, $\rho \geq \rho_0$, whose mean curvature $H$ satisfies 
\begin{equation*}
\oscH <\ep\,,
\end{equation*}
then $S$ is diffeomorphic to  a sphere.   
 
Moreover $S$ is $C^1$-close to a sphere, i.e. there exists a $C^1$-map $F=Id + \Psi \nu: \partial B_{r_i} \to S$ such that 
\begin{equation} \label{Lipschitz_bound}
\|\Psi\|_{C^1(\partial B_{r_i})} \leq C (\oscH)^{\frac{1}{2}} \,,
\end{equation}
where $C$ depends only on $n$ and upper bounds on $\rho^{-1}$ and $|S|$.
\end{corollary}

%
%
%
%

Before explaining the argument of the proof of Theorem \ref{main}, we give a couple of remarks on the bounds on $\rho$ and $|S|$ in Theorem \ref{main} and its Corollary \ref{main2}. The upper bound on $\rho^{-1}$ controls the $C^2$ regularity of the hypersurface, which is a crucial condition for obtaining an estimate like \eqref{stability radii}. 
Indeed, if we assume that $\rho$ is not bounded from below, it is possible to construct a family of closed surfaces embedded in $\RR^3$, not diffeomorphic to a sphere, with $\osc(H)$ arbitrarly small and such that \eqref{stability radii} fails (see Remark \ref{remark ros} and \cite{CirMag}).
The upper bound on $|S|$ is a control on the constants $\ep$ and $C$, which clearly change under dilatations.

We remark that Corollary \ref{main2} can be obtained by a compactness argument by using the theory of varifolds by Allard \cite{All} and Almgren \cite{Alm}.  Indeed, by Allard's compactness theorem every sequence of closed hypersurfaces satisfying (uniformly) the assumptions of Corollary \ref{main2} admits a
subsequence which, up to translations, converges to a hypersurface which satisfies a touching ball condition and hence is $C^{1,1}$ regular. By standard regularity theory, the hypersurface
is smooth and is a sphere by the classical Alexandrov theorem. We think that also the stability estimates in Theorem \ref{main} can be obtained by using Allard's regularity theorem.

There are other possible strategies to obtain quantitative estimates for almost constant mean curvature hypersurfaces and give results as in the spirit of Theorem \ref{main}. Indeed, as we already mentioned, there are several proofs of the rigidity result of Alexandrov (i.e. when $H$ is constant). Beside the method of moving planes (which will be our approach), one could try to quantitavely study the proofs in \cite{MR}, \cite{Re} and \cite{Ro2}, which are based on integral identities. For instance, the approach in \cite{CirMag} starts form \cite{Ro2} and finds quantitative estimates on the closedness of the hypersurface to a compound of tangent balls. As explained in \cite[Appendix A]{CirMag}, another possible approach would be to start from the proof in \cite{MR} and then study almost umbilical hypersurfaces, in the same spirit as \cite{DeLMul1} and \cite{DeLMul2}. However, these approaches based on integral identities do not seem to lead to optimal estimates as in our Theorem \ref{main} (see \cite{CirMag} for a detailed discussion).

Our approach, instead, is based on a quantitative analysis of the method of moving planes and uses arguments from elliptic PDEs theory. Since the proof of symmetry is based on maximum principle, our proof of the stability result will make use of Harnack's and Carleson's (or boundary Harnack's) inequalities and the Hopf Lemma, which can be considered as the quantitative counterpart of the strong and boundary maximum principles. We emphasize that the stability estimate \eqref{stability radii} is optimal and that our proof permits to compute the constants explicitly.


A quantitative study of the method of moving planes was first performed in \cite{ABR}, where the authors obtained a stability result for Serrin's overdetermined problem \cite{Se}, and it has been used in a series of paper by the first author \cite{CMS2,CMV,CMV2} for studying the stability of radial symmetry for Serrin's and other overdetermined problems (see also \cite{BNST} for an approach based on integral identities). In this paper, we follow the same approach of \cite{ABR}, but the setting here is complicated by the fact that we have to deal with manifolds. As we will show, the main goal is to prove an approximate symmetry result for one (arbitrary) direction. After that, the approximate radial symmetry is well-established and follows by an argument in \cite{ABR}.
To prove the approximate symmetry in one direction, we apply the method of moving planes and show that the union of the maximal cap and of its reflection provides a set that \emph{fits well} $S$. This is the main point of our paper and is achieved by developing the following argument. Assume that the surface and the reflected cap are tangent at some point $p_0$ which is an interior point of the reflected cap, and write the two surfaces as graphs of function in a neighborhood of $p_0$. The difference $w$ of these two functions satisfies an elliptic equation $Lw=f$, where $\|f\|_\infty$ is bounded by $\oscH$. By applying Harnack's inequality and interior regularity estimates, we have a bound on the $C^1$ norm of $w$, which says that the two graphs are close in $C^1$ norm no more than some constant times $\oscH$. It is important to observe that this estimate implies that the two surfaces are close to each other and also that the two corresponding Gauss maps are close (in some sense) in that neighborhood of $p_0$.  Then we connect any point $p$ of the reflected cap to $p_0$ and we show that such closeness propagates at $p$. Since we are dealing with a manifold, we have to change local parameterization while we are moving from $p_0$ to $p$ and we have to prove that the closeness information is preserved. By using careful estimates and making use of interior and boundary Harnack's inequalities, we show that this is possible if we assume that $\oscH$ is smaller than some fixed constant. 

The paper is organized as follows. In Section \ref{section preliminary} we prove some preliminary results about hypersurfaces in $\RR^{n+1}$, we recall some results on classical solutions to mean curvature type equations, and we give a sketch of the proof of the symmetry result of Alexandrov. In Section \ref{section technical lemmas} we prove some technical lemmas which will be used for proving the stability result. In Sections \ref{section proof main} and \ref{section proof main2} we prove Theorem \ref{main} and Corollary \ref{main2}, respectively. 

\bigskip
\noindent {\em Acknowledgements.}
The authors wish to thank Manuel Del Pino, Francesco Maggi, Antonio Ros and Susanna Terracini for their enlightening remarks and useful discussions we had together. The problem faced in the paper was suggested to the first author by Rolando Magnanini, who is also acknowledged for his interest in the work. Morerover, the authors would like to thank also Bang-Yen Chen, Barbara Nelli, Paolo Piccione, Fabio Podest\`a and Paolo Salani for useful conversations. 
We also thank the anonymous referees for their remarks and for suggesting an improvement of Corollary \ref{main2}.

The paper was completed while the first author was visiting \lq\lq The Institute for Computational Engineering and Sciences\rq\rq (ICES) of The University of Texas at Austin, and he wishes to thank the institute for hospitality and support. The first author has been supported by the NSF-DMS Grant 1361122, the \lq\lq Gruppo Nazionale per l'Analisi Matematica, la Probabilit\`a e le loro
Applicazioni\rq\rq (GNAMPA) of the Istituto Nazionale di Alta Matematica (INdAM) and the project FIRB 2013 ``Geometrical and Qualitative aspects of PDE''.

The second author was supported by the project FIRB ``Geometria differenziale e teoria geometrica delle funzioni'',
 the project PRIN
\lq\lq  Variet\`a reali e complesse: geometria, topologia e analisi armonica" and by GNSAGA of INdAM.

\section{Notation and preliminary results} \label{section preliminary}

In this section we collect some preliminary results which will be useful in the following. Although some of them are already known, we sketch their proofs for sake of completeness and in order to explain the notation which it will be adopted in the sequel.

\medskip
Let $S$ be a $C^2$ regular, connected, closed hypersurface embedded in $\RR^{n+1}$, $n\geq 1$,  and let  $\Omega$ be the relatively compact domain of $\mathbb R^{n+1}$ bounded by $S$. We denote by $T_{p}S$ the tangent hyperplane to $S$ at $p$ and by $\nu_p$ the inward normal vector.
Given a point $\xi \in \RR^{n+1}$ and an $r>0$,  we denote by $B_r(\xi)$ the ball  in $\RR^{n+1}$ of radius $r$ centered at $\xi$. When a ball is centered at the origin $O$, we simply write $B_r$ instead of $B_r(O)$.
%

\medskip
Let ${\rm dist}_S\colon \RR^{n+1} \to \RR$ be the distance function from $S$, i.e.
\begin{equation*}
{\rm dist}_S(\xi)=
\begin{cases}
\dist(\xi,S), & \textmd{if } \xi \in \Om,\\
-\dist(\xi,S), & \textmd{if } \xi \in \RR^{n+1} \setminus \Om;
\end{cases}
\end{equation*}
it is clear that $S = \{\xi \in \RR^{n+1}:\ {\rm dist}_S(\xi)=0\}$. Moreover, it is well-known (see e.g. \cite{GT}) that $\dist_S$ is Lipschitz continuous with Lipschitz constant 1 and that it is of class $C^2$ in an open neighborhood of $S$.
Therefore the {\em implicit function theorem} implies that, given a point $p\in S$,  $S$ can be locally represented
as a graph over the tangent hyperplane $T_pS $: there exist an open neighbourhood $\mathcal{U}_r(p)$ of $p$ in $S$ and a $C^2$-map $u\colon B_r\cap T_pS \to \mathbb R$ such that 
\begin{equation}\label{urp}
\mathcal U_r(p) = \{p+x+u(x)\nu_p\,:\,\ x \in B_r\cap T_pS\}.
\end{equation}
Moreover, if $q=p+x  + u(x) \nu_p$, with $x \in B_r(p)\cap T_pS$, we have
\begin{equation}\label{nu nabla u}
\nu_q= \frac{\nu_p-\nabla u(x)}{\sqrt{1+ |\nabla u(x)|^2}},
\end{equation} 
where
$$
\nabla u(x)=\sum_{i=1}^N \partial_{e_i}u(x)\,e_i\,,
$$
and $\{e_1,\dots,e_n\}$ is an arbitrary {\em orthonormal } basis of $T_pS$. We notice that, according with the definition above, $\nabla u(x)$ is a vector in $\RR^{n+1}$ for every $x$ in the domain of $u$. Moreover
$\nu_q\cdot \nu_p>0$
for every $q\in \BRT$ and, if $|\nabla u|$ is uniformly bounded in $\BRT$, then $u$ can be extended to
$B_{r'}\cap T_pS$ with  $r'>r$.

Since $S$ is $C^2$-regular, then the domain $\Om$ satisfies a uniform touching ball condition and we denote by $\rho$ the optimal radius, that is: for any $p \in S$ there exist two balls of radius $\rho$ centered at $c^- \in \Om$ and $c^+ \in \RR^{n+1} \setminus \overline{\Om}$ such that $B_\rho(c^-) \subset \Om$, $B_\rho(c^+) \subset \RR^{n+1} \setminus \overline{\Om}$, and $p \in \partial B_\rho(c^\pm)$. $B_\rho(c^-)$ and $B_\rho(c^+)$ are called, respectively, the \emph{interior} and \emph{exterior} touching balls at $p$.
%
%


In the following Lemma we show that we may assume $r=\rho$ in the definition of \eqref{urp}, and we give some bounds in terms of $\rho$ which will be useful in the sequel.

\begin{lemma} \label{lemma bounds rho}
Let $p\in S$. There exists a $C^2$ map $u: B_{\rho}\cap T_pS \to \RR$
such that
\begin{equation*}
\mathcal U_\rho(p)= \{p+x+u(x)\nu_p\,:\,\ x \in B_{\rho}\cap T_pS\}
\end{equation*}
is a relative open set of $S$ and
\begin{eqnarray}
&& \label{bound on u} |u(x)| \leq \rho - \sqrt{\rho^2-|x|^2}\,,\\
&& \label{bound on nabla u} |\nabla u(x)| \leq \frac{|x|}{\sqrt{\rho^2-|x|^2}}\,,
\end{eqnarray}
for every $x \in B_{\rho}\cap T_pS$. Moreover 
\begin{equation} \label{bound on nu N+1}
\nu_p\cdot \nu_q\geq \frac{1}{\rho} \sqrt{\rho^2 - |x|^2}, \ \ \textmd{ and } \ \ |\nu_p - \nu_q| \leq \sqrt{2}\frac{|x|}{\rho},
\end{equation}
for every $q=p+x+u(x)\nu_p$ in $\mathcal U_\rho(p)$.
\end{lemma}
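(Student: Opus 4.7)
The plan is to derive the four estimates from the touching ball condition in two steps: using the touching balls at the base point $p$ gives \eqref{bound on u}, while using \emph{simultaneously} the touching balls at $p$ and at a generic graph point $q=p+x+u(x)\nu_p$ gives \eqref{bound on nu N+1}, from which \eqref{bound on nabla u} is immediate via \eqref{nu nabla u}. The fact that $u$ is defined on the entire disk $B_\rho\cap T_pS$ will then follow from these bounds by a continuation argument based on the implicit function theorem.

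For \eqref{bound on u}, since $q\in S$ cannot enter either of the open touching balls $B_\rho(p\pm\rho\nu_p)$, which lie in $\Om$ and in $\RR^{n+1}\setminus\overline{\Om}$ respectively, the conditions $|q-p\mp\rho\nu_p|^2\ge\rho^2$ collapse, using $(q-p)\cdot\nu_p=u(x)$ and $|q-p|^2=|x|^2+u(x)^2$, into the single inequality
$$
u(x)^2-2\rho|u(x)|+|x|^2\ge 0,
$$
whose two branches are $|u(x)|\le\rho-\sqrt{\rho^2-|x|^2}$ and $|u(x)|\ge\rho+\sqrt{\rho^2-|x|^2}$; by continuity and $u(0)=0$ only the first is admissible.

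For \eqref{bound on nu N+1}, the key observation is that the interior touching ball at $p$, $B_\rho(p+\rho\nu_p)\subset\Om$, is disjoint from the exterior touching ball at $q$, $B_\rho(q-\rho\nu_q)\subset\RR^{n+1}\setminus\overline{\Om}$, so the distance of their centers is at least $2\rho$; the same reasoning with \emph{interior} and \emph{exterior} swapped produces a second inequality. Adding the two expansions (the cross terms in $(p-q)\cdot(\nu_p+\nu_q)$ cancel) and using $|\nu_p+\nu_q|^2=2+2\,\nu_p\cdot\nu_q$ yields
$$
\nu_p\cdot\nu_q\ \ge\ 1-\frac{|p-q|^2}{2\rho^2}.
$$
Inserting the bound of the previous step, which gives $|x|^2+u(x)^2\le 2\rho^2-2\rho\sqrt{\rho^2-|x|^2}$, I get the first inequality in \eqref{bound on nu N+1}. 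The bound on $|\nu_p-\nu_q|$ then follows from $|\nu_p-\nu_q|^2=2(1-\nu_p\cdot\nu_q)$ and the elementary inequality $\rho-\sqrt{\rho^2-|x|^2}\le|x|^2/\rho$, valid for $|x|\le\rho$. Finally, \eqref{bound on nabla u} is just the reformulation of this estimate through the identity $\nu_p\cdot\nu_q=1/\sqrt{1+|\nabla u(x)|^2}$, which is immediate from \eqref{nu nabla u}.

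It remains to show that the domain of $u$ is all of $B_\rho\cap T_pS$. Starting from the local graph around $0$ provided by the implicit function theorem, let $D\subset B_\rho\cap T_pS$ be the maximal connected open neighbourhood of the origin on which the graph map $x\mapsto p+x+u(x)\nu_p$ embeds $D$ as a relatively open subset of $S$. The estimates above hold on $D$, give uniform bounds for $u$ and $\nabla u$ on compact subsets of $B_\rho\cap T_pS$, and ensure $\nu_p\cdot\nu_q>0$ throughout $D$; this last fact allows the implicit function theorem to be reapplied at any hypothetical boundary point $x^*\in\partial D\cap B_\rho\cap T_pS$, extending $u$ past $x^*$ and contradicting the maximality of $D$ unless $D=B_\rho\cap T_pS$. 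The only step that demands a little ingenuity is the disjointness argument, which exploits the touching balls at $p$ and at $q$ at the same time; everything else is elementary once this estimate is available.
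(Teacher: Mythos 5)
Your proof is correct and follows essentially the same route as the paper: the touching balls at $p$ give \eqref{bound on u}, the disjointness of the interior ball at $p$ and the exterior ball at $q$ (and vice versa), added together, yields $|p-q|^2+2\rho^2\,\nu_p\cdot\nu_q\geq 2\rho^2$ and hence \eqref{bound on nu N+1}, \eqref{bound on nabla u} follows from \eqref{nu nabla u}, and the gradient bound allows the continuation of $u$ to all of $B_\rho\cap T_pS$. Your write-up merely spells out a few steps (the two-branch argument for \eqref{bound on u} and the maximal-domain continuation) that the paper leaves implicit.
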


\begin{proof}
By the implicit function theorem, there exists $r>0$, $u:B_r\cap T_pS  \to \RR$ and $\mathcal{U}_r(p)$ as in \eqref{urp}. We may assume that $r \leq \rho$.
The bound \eqref{bound on u} in $B_r\cap T_pS$ easily follows from the definition of interior and exterior touching balls at $p$.
We prove that estimate \eqref{bound on nabla u} in $\BRT$, which allows us to enlarge the domain of
$u$ up to $B_{\rho}\cap T_pS $.
Let
$$
q=p+x+u(x)\nu_p\,,
$$
with $|x|<r$ be an arbitrary point of $\mathcal U_r(p)$  (notice that $\nu_p\cdot \nu_q>0$). Since
$$
B_\rho(p + \rho \nu_p) \cap B_\rho(q- \rho \nu_q) = \emptyset\,, 
$$
we have that
$$
|p + \rho \nu_p - q + \rho \nu_q |\geq 2\rho.
$$
Analogously, $B_\rho(p-\rho \nu_p) \cap B_\rho(q+ \rho \nu_q) = \emptyset$ gives that
$$
|q+\rho \nu_q - p + \rho \nu_p|\geq 2\rho.
$$
By adding the squares of the last two inequalities we obtain that
$$
|p-q|^2 + 2 \rho^2 (\nu_p\cdot \nu_q) \geq 2 \rho^2,
$$
and from \eqref{bound on u} we get \eqref{bound on nu N+1}.
From \eqref{nu nabla u} and \eqref{bound on nu N+1} we obtain \eqref{bound on nabla u} in $B_r\cap T_pS$. Since $|\nabla u|$ is bounded in $\overline{B}_r\cap T_pS$, then we can extend $u$ in a larger ball where \eqref{bound on nabla u} is still satisfied.  It is clear that we can choose $r=\rho$ and  \eqref{bound on u}--\eqref{bound on nu N+1} hold.
\end{proof}

Given $p,q \in S$ we denote by $d_S(p,q)$ their intrinsic distance inside $S$ and, if $A$ is an arbitrary subset of $S$, we define
\begin{equation*}
d_S (p,A)= \inf_{q \in A} d_S(p,q).
\end{equation*}
We have the following Lemma.

\begin{lemma} \label{lemma bound on d Ga}
Let $p \in S$, $q\in \mathcal{U}_\rho(p)$ and let $x$ be the orthogonal projection of $q$ onto the hyperplane $T_pS$. Then,
\begin{equation}
 \label{bound on d Ga above}  |x| \leq d_S(p,q) \leq \rho \arcsin \frac{|x|}{\rho}.
\end{equation}
\end{lemma}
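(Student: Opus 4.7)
The plan is to prove the two inequalities separately, each from a natural construction relating curves on $S$ to curves in $T_pS$.

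For the lower bound $|x|\le d_S(p,q)$, I would argue as follows. Let $\gamma:[0,1]\to S$ be any $C^1$ curve joining $p$ to $q$. Let $\pi:\RR^{n+1}\to T_pS$ denote orthogonal projection onto $T_pS$ (viewed as an affine hyperplane through $p$); this map is $1$-Lipschitz, so the projected curve $\pi\circ\gamma$ has length at most the length of $\gamma$. Since $\pi(\gamma(0))=p$ and $\pi(\gamma(1))=p+x$, the projected curve joins two points at distance $|x|$, whence its length is at least $|x|$. Taking the infimum over $\gamma$ yields the lower bound.

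For the upper bound, I would exhibit an explicit curve on $S$ whose length realizes the stated estimate. Use the parameterization of Lemma~1.1: lift the straight segment $t\mapsto tx$, $t\in[0,1]$, inside $B_\rho\cap T_pS$ to the curve
\begin{equation*}
\tilde\gamma(t)=p+tx+u(tx)\,\nu_p,\qquad t\in[0,1],
\end{equation*}
which lies in $\mathcal U_\rho(p)\subset S$ and joins $p$ to $q$. Since $x\in T_pS$ is orthogonal to $\nu_p$, differentiating gives $\tilde\gamma'(t)=x+(\nabla u(tx)\cdot x)\nu_p$ and hence
\begin{equation*}
|\tilde\gamma'(t)|^2=|x|^2+(\nabla u(tx)\cdot x)^2\le |x|^2\bigl(1+|\nabla u(tx)|^2\bigr),
\end{equation*}
by Cauchy--Schwarz. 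Now apply the gradient bound \eqref{bound on nabla u} evaluated at $tx$: one obtains $1+|\nabla u(tx)|^2\le \rho^2/(\rho^2-t^2|x|^2)$, and therefore
\begin{equation*}
d_S(p,q)\le \int_0^1|\tilde\gamma'(t)|\,dt\le \int_0^1\frac{\rho\,|x|}{\sqrt{\rho^2-t^2|x|^2}}\,dt.
\end{equation*}
The substitution $s=t|x|$ reduces this to $\int_0^{|x|}\rho/\sqrt{\rho^2-s^2}\,ds=\rho\arcsin(|x|/\rho)$, giving the upper bound.

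No serious obstacle is anticipated: both inequalities reduce to one-line consequences of Lemma~1.1, the non-trivial input being the a priori gradient bound \eqref{bound on nabla u} that exploits the uniform touching ball condition. The only point to be slightly careful about is that the lifted curve $\tilde\gamma$ stays in the domain of $u$, which is guaranteed because $q\in\mathcal U_\rho(p)$ forces $|x|<\rho$ and the segment $\{tx:t\in[0,1]\}$ lies in $B_\rho\cap T_pS$; this also keeps the integrand in the displayed bound finite.
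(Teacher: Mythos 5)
Your proposal is correct and follows essentially the same route as the paper: the upper bound is obtained exactly as in the paper's proof (lifting the segment $t\mapsto tx$, applying Cauchy--Schwarz and the gradient bound \eqref{bound on nabla u}, and integrating to get $\rho\arcsin(|x|/\rho)$), while the lower bound, which the paper dismisses as trivial, you justify cleanly via the $1$-Lipschitz orthogonal projection onto $T_pS$.
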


\begin{proof}
The first inequality is trivial. In order to prove the second inequality we consider the curve  $\gamma\colon [0,1]\to S$ joining $p$ with $q$ defined by   $\ga(t)=p+tx+u(tx)\,\nu_p$, $t\in[0,1]$. Then
$$
\dot\gamma(t)=x+ ( \nabla u(tx)\cdot x)\,\nu_p;
$$
since $x \in T_p S$ and by Cauchy-Schwartz inequality we obtain that
$$
|\dot\gamma(t)|\leq |x| \sqrt{1+|\nabla u(tx)|^2}\,.
$$
Therefore inequality  \eqref{bound on nabla u}  in Lemma \ref{lemma bounds rho} implies
$$
 |\dot\gamma(t)|\leq \,\frac{\rho |x|}{\sqrt{\rho^2-t^2|x|^2}}.
$$
Since
\begin{equation*}
d_S(p,q) \leq \int_{0}^1 |\dot\gamma(t)| dt \, ,
\end{equation*}
then
\begin{equation*}
d_S(p,q) \leq |x| \rho \int_{0}^1  \frac{1}{\sqrt{\rho^2-t^2|x|^2}} dt 
\end{equation*}
which gives \eqref{bound on d Ga above}.
\end{proof}

Let $p\in S$ and let $u \colon B_\rho \cap T_p S \to S $ as in Lemma \ref{lemma bounds rho}. It is well-known (see \cite{GT}) that $u$ is a classical solution to
\begin{equation}\label{MCeq}
\diver \left(\frac{\nabla u}{\sqrt{1+|\nabla u|^2}} \right) = n H, \quad \textmd{in } B_\rho \cap T_pS,
\end{equation}
where $H$ is the mean curvature of $S$ regarded as a map on $ B_\rho \cap T_p S $. We notice that $\nabla u \in T_pS$ and the divergence is meant in local coordinates on $T_pS$: if $\{e_1,\ldots,e_n\}$ is an orthonormal basis of $T_pS$ and $F=\sum_{i=1}^n F_i e_i$, then
\begin{equation*}
\diver F = \sum_{i=1}^n \frac{\pa F_i}{\pa e_i}.
\end{equation*}
Moreover, \eqref{MCeq} is {\em uniformly elliptic} once $u$ is regarded as a regular map in an open set of $\RR^n$ and has bounded gradient, since
\begin{equation}\label{ellipticity MCeq}
|\xi|^2 \leq \frac{\pa}{\pa \zi_j} \left(\frac{\zi_i}{\sqrt{1+|\zi|^2}}\right) \xi_i \xi_j \leq (1+|\zi|^2) |\xi|^2.
\end{equation}
for every $\xi=(\xi_1\dots,\xi_n)$, $\zi=(\zi_1,\dots, \zi_n)$ in $\RR^n$.

\subsection{Classical solutions to mean curvature equation} In this subsection we collect some results about classical solutions to \eqref{MCeq} which will be used in the next sections. 

Let $B_r$ be the ball of $\RR^k$ centered at the origin and having radius $r$. Given a differentiable map $u:B_r \to \RR$, we denote by $Du$ the gradient of $u$ in $\RR^k$: 
$$
Du=\left( \frac{\pa u}{\pa x_1}, \ldots, \frac{\pa u}{\pa x_k} \right) \,.
$$   
We remark that this notation differs from the one in the rest of the paper, where we use the $\nabla$ symbol to denote a vector in $\RR^{n+1}$. 

Let $H_0,\,H_1 \in C^0(B_r)$ and $u_0$ and $u_1$ be two classical solutions of
\begin{equation}\label{MCeq uj}
\diver \left(\frac{D u_j}{\sqrt{1+|D u_j|^2}} \right) = k H_j, \quad \textmd{in } B_{r}\, ,
\end{equation}
$j=0,1$. It is well-known that (see \cite{GT})
$$
w=u_1-u_0
$$
satisfies a linear elliptic equation of the form
\begin{equation}\label{Lw = H1-H0}
Lw=k(H_1-H_0),
\end{equation}
where
\begin{equation}\label{Lw}
Lw = \sum_{i,j=1}^k \frac{\pa}{\pa x_j} \left( a^{ij}(x) \frac{\pa w}{\pa x_i} \right),
\end{equation}
with
\begin{equation*}
a^{ij}(x) = \ints_0^1 \frac{\pa}{\pa \zi_j} \left( \frac{\zi_i}{\sqrt{1+|\zi|^2}} \right)\bigg{|}_{\zi=D u_t(x)} dt,
\end{equation*}
and
\begin{equation*}
u_t(x)=tu_1(x) + (1-t)u_0(x), \quad x \in B_r.
\end{equation*}
From \eqref{ellipticity MCeq}, we find that
\begin{equation}\label{ellipticity Lw}
|\xi|^2 \leq a^{ij}(x) \xi_i \xi_j \leq |\xi|^2 \int_0^1(1+|D u_t(x)|^2) dt,
\end{equation}
where we used Einstein summation convention.
The following Harnack's type inequality will be one of the crucial tools for proving the stability result.

\begin{lemma} \label{lemma Harnack interior}
Let $u_j$, $j=0,1$, be two classical solutions of \eqref{MCeq uj}, with $u_1-u_0 \geq 0$ in $B_r$, and assume that
\begin{equation}\label{nabla uj leq M}
\| D u_j \|_{C^1(B_r)} \leq M, \quad j=0,1,
\end{equation}
for some positive constant $M$.
Then there exists a constant $K_1$, depending only on the dimension $k$ and $M$, such that
\begin{equation}\label{eq Harnack C1}
\|u_1-u_0\|_{C^1(B_{r/4})} \leq K_1 ( \inf_{B_{r/2}}(u_1-u_0) + \|H_1 - H_0\|_{C^0(B_r)}).
\end{equation}
\end{lemma}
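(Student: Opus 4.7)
The plan is to combine the classical Moser Harnack inequality with interior $C^1$ regularity for the linear divergence-form equation satisfied by $w = u_1 - u_0$. From (\ref{Lw = H1-H0})--(\ref{ellipticity Lw}) we already know that $w \geq 0$ is a classical solution of
$$Lw = \pa_j(a^{ij} \pa_i w) = k(H_1 - H_0) \quad \text{in } B_r,$$
and that by (\ref{ellipticity Lw}) together with (\ref{nabla uj leq M}) the operator $L$ is uniformly elliptic with constants depending only on $k$ and $M$. Moreover, since $\|Du_j\|_{C^1(B_r)} \leq M$ implies that $u_j \in C^2$ with bounded Hessian, the segment $Du_t(x) = t\,Du_1(x) + (1-t)\,Du_0(x)$ is Lipschitz in $x$ uniformly in $t\in [0,1]$, so the coefficients $a^{ij}$ are themselves Lipschitz on $B_r$ with Lipschitz norm depending only on $k$ and $M$.

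First, I would apply Moser's Harnack inequality for divergence-form operators to the nonnegative solution $w$ of $Lw = f$, with $f := k(H_1 - H_0)$. Because the ellipticity constants and $L^\infty$ norms of the coefficients depend only on $k$ and $M$, the standard weak Harnack inequality on the pair $(B_{r/2}, B_r)$ yields
$$\sup_{B_{r/2}} w \leq C_1 \left( \inf_{B_{r/2}} w + \|H_1-H_0\|_{C^0(B_r)} \right),$$
with $C_1 = C_1(k, M)$.

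Next, I would upgrade this $L^\infty$ bound to a $C^1$ bound on the smaller ball $B_{r/4}$. Since the coefficients $a^{ij}$ are Lipschitz (hence Hölder continuous) with uniform norms, writing the equation in non-divergence form and applying interior Schauder (or $W^{2,p}$) estimates gives
$$\|w\|_{C^1(B_{r/4})} \leq C_2 \left( \|w\|_{L^\infty(B_{r/2})} + \|H_1-H_0\|_{C^0(B_r)} \right),$$
with $C_2 = C_2(k, M)$. Combining these two inequalities and absorbing the factor $k$ into the final constant yields (\ref{eq Harnack C1}) with $K_1 = K_1(k,M)$.

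The statement is really a bookkeeping combination of two classical tools rather than a step that presents a substantial obstacle; the only point requiring care is verifying that both the ellipticity bounds and the Hölder/Lipschitz norms of the coefficients $a^{ij}$ depend \emph{only} on the dimension $k$ and the gradient bound $M$, which is immediate from the explicit formula for $a^{ij}$ together with the assumption $\|Du_j\|_{C^1(B_r)}\leq M$.
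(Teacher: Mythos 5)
Your proposal is correct and follows essentially the same route as the paper: observe that $w=u_1-u_0$ solves the uniformly elliptic divergence-form equation \eqref{Lw = H1-H0} with constants controlled by $k$ and $M$, apply the Harnack inequality (the paper cites Theorems 8.17--8.18 of \cite{GT}) to bound $\sup_{B_{r/2}}w$, and then upgrade to a $C^1$ bound on $B_{r/4}$ by interior regularity (the paper uses the $C^{1,\alpha}$ estimate of Theorem 8.32 in \cite{GT}, while you pass through non-divergence form with Schauder or $W^{2,p}$ estimates, which is an immaterial variation given the Lipschitz bound on the coefficients).
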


\begin{proof}
We have already observed that $w=u_1-u_0$ satisfies \eqref{Lw = H1-H0} in $B_r$. From \eqref{ellipticity Lw} and \eqref{nabla uj leq M}, we find that $Lw$ is uniformly elliptic with continuous bounded coefficients, that is
\begin{equation*}
|\xi|^2 \leq a^{ij}(x) \xi_i \xi_j \leq |\xi|^2 (1+M^2),
\end{equation*}
and
\begin{equation*}
\Big{|} \frac{\pa }{\pa x_j} a^{ij}(x) \Big{|} \leq M',
\end{equation*}
for some positive $M'$ depending only on $M$. 

From Theorems 8.17 and 8.18 in \cite{GT}, we obtain the following Harnack's inequality
\begin{equation*}
\sup_{B_{r/2}} w \leq C_1 ( \inf_{B_{r/2}} w + \|H_1 - H_0\|_{C^0(B_r)}).
\end{equation*}
Then we use Theorem 8.32 in \cite{GT} and obtain that
\begin{equation*}
|w|_{C^{1,\alpha}(B_{r/4})} \leq C_2 \left(\|w\|_{C^0(B_{r/2})} + \|H_1 - H_0\|_{C^0(B_{r/2})} \right),
\end{equation*}
where $|\cdot|_{C^{1,\alpha}(B_{r/4})}$ is the $C^{1,\alpha}$ seminorm in $B_{r/4}$, with $\al \in (0,1)$.
By combining the last two inequalities, we obtain \eqref{eq Harnack C1} at once.
\end{proof}

Another crucial tool for our result is the following boundary Harnack's type inequality (or \emph{Carleson estimate}, see \cite{CS}).


\begin{lemma} \label{lemma Carleson}
Let $E$ be a domain in $\RR^k$ and let $T$ be an open set of $\pa E$ which is of class $C^2$. Let $u_j \in C^2(\overline{E})$, $j=0,1,$ be two solutions of
\begin{equation}\label{eq MC half a ball}
\diver \left( \frac{D u_j}{\sqrt{1+|D u_j|^2}} \right) = k H_j, \quad \textmd{in } E,
\end{equation}
with $j=0,1$, satisfying $\|D u_j\|_{C^1(E)} \leq M$ for some positive $M$.
Let $x_0 \in T$ and $r>0$ be such that $B_r(x_0) \cap \pa E \subset T$, and assume that
$$
u_1-u_0 \geq 0 \ \ \textmd{in } B_r(x_0) \cap E, \quad u_1 - u_0 \equiv 0 \ \ \textmd{on } B_r\cap \pa E .
$$
Assume further that $e_1$ is the interior normal to $E$ at $x_0$. Then, there exists a constant $K_2>0$ such that
\begin{equation}\label{boundary Harnack}
\sup_{B_{r/4}(x_0) \cap E} (u_1-u_0) \leq K_2 \left((u_1-u_0)\left(x_0 + \frac{r}{2}e_1\right)+ \|H_1-H_0\|_{C^0(B_r)}\right),
\end{equation}
where the constant $K_2$ depends only on the dimension $k$, $M$ and the $C^2$ regularity of $T$.
\end{lemma}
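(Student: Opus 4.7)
The plan is to reduce the statement to the classical Carleson boundary Harnack estimate for a linear, uniformly elliptic, divergence-form equation, applied to the difference $w := u_1 - u_0$. As in the proof of Lemma \ref{lemma Harnack interior}, subtracting the two equations in \eqref{eq MC half a ball} shows that $w$ satisfies a linear equation $Lw = k(H_1 - H_0)$ in $E$, with $L$ of the form \eqref{Lw}. The uniform bound $\|Du_j\|_{C^1(E)} \leq M$ together with \eqref{ellipticity Lw} gives both uniform ellipticity and a uniform Lipschitz bound on the coefficients $a^{ij}$, all with constants depending only on $k$ and $M$. Moreover, $w \geq 0$ in $B_r(x_0) \cap E$ and $w \equiv 0$ on $B_r(x_0) \cap \pa E$.

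Next I would flatten $T$ near $x_0$. Since $T$ is of class $C^2$ and $B_r(x_0) \cap \pa E \subset T$, there is a $C^2$ diffeomorphism $\Phi$ sending $x_0$ to the origin, $T$ locally into $\{y_1 = 0\}$, $E$ locally into $\{y_1 > 0\}$, and with $D\Phi(x_0)e_1 = e_1$; the $C^2$ norms of $\Phi$ and $\Phi^{-1}$ are controlled by the $C^2$ regularity of $T$. The pullback $\tilde w := w \circ \Phi^{-1}$ then satisfies a new uniformly elliptic divergence-form equation $\tilde L \tilde w = \tilde f$ on a half-ball $B_{r'}^{+} := B_{r'} \cap \{y_1 > 0\}$, with $r'$ comparable to $r$, $\|\tilde f\|_\infty \leq C \|H_1 - H_0\|_{C^0(B_r)}$, $\tilde w \geq 0$ in $B_{r'}^{+}$, and $\tilde w \equiv 0$ on $\{y_1 = 0\} \cap B_{r'}$; ellipticity and Lipschitz bounds on the coefficients of $\tilde L$ depend only on $k$, $M$, and the $C^2$ regularity of $T$.

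At this point I would invoke the classical Carleson estimate (see e.g.\ \cite{CS}) for nonnegative solutions of uniformly elliptic divergence-form equations vanishing on a flat portion of the boundary of a half-ball: it yields $\sup_{B_{r'/4}^{+}} v \leq K\, v(P)$ at the corkscrew point $P := (r'/2)e_1$. To absorb the inhomogeneous term I would split $\tilde w = \tilde w_1 + \tilde w_2$, where $\tilde w_2$ solves $\tilde L \tilde w_2 = \tilde f$ in $B_{r'}^{+}$ with zero boundary data; a quadratic barrier together with the maximum principle gives $\|\tilde w_2\|_{L^\infty(B_{r'}^{+})} \leq C_0 \|\tilde f\|_\infty$. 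Since $\tilde L \tilde w_1 = 0$ with nonnegative boundary data, the maximum principle yields $\tilde w_1 \geq 0$, and applying the homogeneous Carleson estimate to $\tilde w_1$ leads to
\begin{equation*}
\sup_{B_{r'/4}^{+}} \tilde w \;\leq\; K \bigl(\tilde w(P) + \|\tilde f\|_\infty\bigr),
\end{equation*}
after reabsorbing the $L^\infty$ bound on $\tilde w_2$ into the constant.

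Finally I would pull this inequality back through $\Phi$. The image $\Phi^{-1}(P)$ lies inside $E$ at distance $\sim r$ from $x_0$ along the interior normal $e_1$, and it can be joined to the prescribed point $x_0 + (r/2)e_1$ by a short chain of balls of comparable radii staying at controlled distance from $\pa E$; iterating the interior Harnack estimate of Lemma \ref{lemma Harnack interior} along this chain converts $\tilde w(P)$ into a controlled multiple of $(u_1 - u_0)\bigl(x_0 + (r/2)e_1\bigr)$, modulo an extra additive $\|H_1 - H_0\|_{C^0(B_r)}$ term. Collecting the constants yields \eqref{boundary Harnack} with $K_2$ depending only on $k$, $M$, and the $C^2$ regularity of $T$. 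The main technical obstacle I expect is bookkeeping: propagating the quantitative control from the nonlinear mean curvature equation through linearization, through the boundary flattening, and through the corkscrew-to-prescribed-point bridge, so that the final constant has precisely the stated dependencies and no hidden dependence on $r$ (which is harmless because in the applications $r$ will be bounded by the touching ball radius $\rho$).
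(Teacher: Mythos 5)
Your argument is correct, but it follows a different route from the paper. The paper's proof is a one-line reduction: after the same linearization step (so that $w=u_1-u_0$ solves $Lw=k(H_1-H_0)$ with $L$ as in \eqref{Lw}, uniformly elliptic with bounds depending on $k$ and $M$ by \eqref{ellipticity Lw}), it simply invokes Theorem 1.3 of \cite{BCN}, which is a Carleson-type estimate already formulated for inhomogeneous equations near a $C^2$ boundary portion, together with Corollary 8.36 of \cite{GT}, in place of the interior results used in Lemma \ref{lemma Harnack interior}. You instead reprove that black box: you flatten $T$ with a $C^2$ diffeomorphism, split off the right-hand side by a barrier and the maximum principle, apply the classical homogeneous Carleson estimate of \cite{CS} on the half-ball, and then bridge from the corkscrew point back to the prescribed point $x_0+\frac r2 e_1$ by a finite Harnack chain using Lemma \ref{lemma Harnack interior}. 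All of these steps are sound (in particular $\tilde w_1$ is indeed nonnegative and vanishes on the flat portion, so the homogeneous Carleson estimate applies), and your construction is more self-contained, at the price of extra bookkeeping; the citation route gets the same conclusion with no flattening or splitting. One shared caveat: both arguments implicitly use that $x_0+\frac r2 e_1$ lies in $E$ at distance comparable to $r$ from $\pa E$, and the constants (your barrier and Harnack-chain constants, and likewise the inhomogeneous terms in the cited theorems) carry a dependence on $r$ relative to the $C^2$ modulus of $T$; as you note, this is harmless here since in the application $r$ is fixed in terms of $\rho$, which is exactly the level of precision the paper itself adopts.
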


\begin{proof}
The proof is analogous to the one of Lemma \ref{lemma Harnack interior}, where we use Theorem 1.3 in \cite{BCN} and Corollary 8.36 in \cite{GT} in place of Theorems 8.17, 8.18 and 8.32 in \cite{GT}.
\end{proof}

We conclude this subsection with a quantitative version of the Hopf Lemma. We start with a statement which is valid for a general second order elliptic operator of the form
\begin{equation} \label{L cal}
\mathcal{L} w = \sum_{i,j=1}^k a^{ij} w_{x_i x_j} + \sum_{i=1}^k b^i w_{x_i},
\end{equation}
satisfying the ellipticity conditions
\begin{equation} \label{ellipt L cal}
 a^{ij} \zi_i \zi_j  \geq \lam |\zi|^2 , \quad \textmd{ and } \quad |a^{ij}|,|b^i| \leq \Lambda, \ \ i,j=1,\ldots,k,
\end{equation}
for $\lam, \Lambda >0$.

\begin{lemma} \label{lemma hopf migliorato}
Let $r>0$ and $\ga \geq 0$ be given. Assume that $w \in C^2 (B_r) \cap C^0(\overline{B}_r)$ fulfills the following conditions
\begin{equation*}
\mathcal{L} w \leq \ga \ \  \textmd{ and }  \ \ w\geq 0 \ \ \ \textmd{in } B_{r},
\end{equation*}
with $\mathcal{L}$ given by \eqref{L cal}.

Then, there exists a positive constant $C$ depending on $k, \lam, \Lambda,$ and upper bound on $\gamma$ such that for any $x_0 \in \pa B_r$ we have that 
\begin{equation}\label{hopf migliorato 1}
\sup_{B_{r/2}} w \leq C \left( \frac{w((1-t/r)x_0)}{t} + \ga \right), \quad \textmd{for any }\ 0< t \leq r/2.
\end{equation}
Moreover, if $w(x_0)=0$ then we have that 
\begin{equation}\label{hopf migliorato 2}
\sup_{B_{r/2}} w \leq C \left( \frac{ \pa w(x_0)}{\pa \nu} + \ga \right), 
\end{equation} 
where $\nu$ denotes the inward normal to $\pa B_r$.
\end{lemma}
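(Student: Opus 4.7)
The plan is to prove \eqref{hopf migliorato 1} first, then deduce \eqref{hopf migliorato 2} by letting $t\to 0^+$: the hypothesis $w(x_0)=0$ together with $w\geq 0$ forces the one-sided difference quotient $w((1-t/r)x_0)/t$ to converge to $\partial w(x_0)/\partial \nu$ along the inward normal $\nu$ at $x_0$, so passing to the limit in \eqref{hopf migliorato 1} yields \eqref{hopf migliorato 2}.

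For \eqref{hopf migliorato 1}, the strategy is a quantitative version of the classical Hopf-lemma barrier argument. Let $x_t=(1-t/r)x_0$ and $x_*=(3/4)x_0$, so that the ball $B'=B_{r/4}(x_*)$ is internally tangent to $\partial B_r$ at $x_0$. On the annulus $A=B'\setminus \overline{B_{r/8}(x_*)}$ I would use the standard exponential barrier
\begin{equation*}
v(x)=e^{-\alpha|x-x_*|^2}-e^{-\alpha r^2/16},
\end{equation*}
with $\alpha$ chosen large in terms of $k,\lambda,\Lambda$ so that $\mathcal{L}v\geq c_0>0$ in $A$; then $v\equiv 0$ on $\partial B'$ (with $|\nabla v(x_0)|>0$) and $v\geq c_1>0$ on $\partial B_{r/8}(x_*)$.

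A Harnack-chain argument, using the standard Harnack inequality for $\mathcal{L}$ (cf.\ \cite{GT}), provides two key bounds: a global one, $\sup_{B_{r/2}} w \leq K_1(w(x_*)+\gamma)$, and a local one, $\inf_{\partial B_{r/8}(x_*)} w\geq c_2 w(x_*)-C_2\gamma$. The maximum principle applied to $w-\epsilon v$ on $A$, with $\epsilon=(c_2 w(x_*)-C_2\gamma)/c_1$ (discarding the easy case $c_2 w(x_*)\leq (C_2+c_1/c_0)\gamma$, in which \eqref{hopf migliorato 1} follows at once from the first Harnack bound), yields $w\geq \epsilon v$ on $A$. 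Evaluating at $x_t$ for $t\in(0,r/4]$, where $v(x_t)\geq c_3 t$ by the linear vanishing of $v$ at $x_0$, gives $w(x_t)\geq c_3\epsilon\, t$; solving this for $w(x_*)$ and substituting into the first Harnack bound produces \eqref{hopf migliorato 1}. For $t\in(r/4,r/2]$ the inequality is already contained in the interior Harnack estimate alone.

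The main obstacle is the coherent handling of the inhomogeneity $\gamma$ across the Harnack step, the barrier comparison, and the resulting inequality. In particular, the condition $\mathcal{L}(w-\epsilon v)\leq 0$ on $A$ requires $\epsilon c_0 \geq \gamma$, forcing a case distinction on whether $w(x_*)$ exceeds or is below a fixed multiple of $\gamma$. In the first case the barrier comparison proceeds and yields the sharp bound; in the second case the conclusion follows directly from Harnack. This dichotomy accounts for the stated dependence of $C$ on an upper bound of $\gamma$.
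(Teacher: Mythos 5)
Your argument is correct in substance and follows the same broad template as the paper --- a quantitative Hopf-type barrier comparison combined with the Harnack inequality --- but the implementation differs in two ways worth noting. First, the geometry: the paper works directly on the concentric annulus $B_r\setminus\overline{B}_{r/2}$, so the points $(1-t/r)x_0$ lie in the comparison region for all $0<t<r/2$ and no Harnack chain to a tangent ball is needed; you instead use the classical small ball $B_{r/4}(3x_0/4)$ tangent at $x_0$ and transport the information back to $B_{r/2}$ by Harnack. This forces a minor bookkeeping correction: $x_t=(1-t/r)x_0$ lies in your annulus $B_{r/4}(x_*)\setminus\overline{B_{r/8}(x_*)}$ only for $t<r/8$, not for all $t\le r/4$; for $t\in[r/8,r/2]$ the point sits inside the inner ball and is handled, exactly as you already do for $t\in(r/4,r/2]$, by the interior Harnack estimate, so nothing breaks. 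Second, the treatment of the inhomogeneity: you keep the homogeneous exponential barrier and absorb $\gamma$ through the dichotomy on whether $w(x_*)$ is below or above a fixed multiple of $\gamma$ (needed so that $\epsilon\,\mathcal{L}v\ge\gamma$), whereas the paper builds the $\gamma$-term into the barrier itself, taking $v(x)=\bigl(\min_{\overline{B}_{r/2}}w\bigr)\frac{e^{-\alpha|x|^2}-e^{-\alpha r^2}}{e^{-\alpha(r/2)^2}-e^{-\alpha r^2}}+e^{\beta|x|^2}-e^{\beta r^2}$ with $\beta$ chosen in terms of $\gamma$ so that $\mathcal{L}v\ge\gamma$, which avoids any case distinction. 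In the paper it is precisely this $\beta$ that produces the dependence of $C$ on an upper bound for $\gamma$; your route actually yields constants independent of $\gamma$, so attributing the stated $\gamma$-dependence to your dichotomy is not quite the right explanation --- it is a harmless (indeed slightly stronger) outcome. Finally, your limiting passage $t\to0^+$ to deduce \eqref{hopf migliorato 2} from \eqref{hopf migliorato 1} is the same step the paper performs implicitly, with the same tacit assumption that the one-sided normal derivative of $w$ at $x_0$ exists.
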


\begin{proof}
In the annulus $A=B_r \setminus \overline{B}_{r/2}$,  we consider the auxiliary function
\begin{equation*}
v(x) =  \Big( \min_{\overline{B}_{r/2}} w \Big) \frac{e^{-\al|x|^2}-e^{-\al r^2}}{e^{-\al (r/2)^2} - e^{-\al r^2}}  + e^{\beta |x|^2} - e^{\beta r^2}, \end{equation*}
where 
$$
\al = \frac{(k + r \sqrt{k}) \Lambda}{2 \lam^2},  \quad \beta = \gamma \Big[k\lam - \sqrt{k}\Lambda r +  \sqrt{(k\lam - \sqrt{k}\Lambda r)^2 + \gamma \lam r^2}  \Big]^{-1}.
$$
Here, the constants $\al $ and $\beta$ are chosen in such a way that $v$ satisfies
$$
\mathcal{L} v \geq \gamma.
$$
We notice that
\begin{equation}\label{eq 1 hopf migl}
\frac{v((1-t/r)x_0)}{t} \geq \frac{\al r e^{-\al r^2}}{e^{-\al (r/2)^2} - e^{-\al r^2}}  \Big( \min_{\overline{B}_{r/2}} w \Big) - 2\beta r e^{\beta r^2}.
\end{equation}
Since $v=0$ on $\pa B_r$ and $v \leq \min_{\pa B_{r/2}} w$ on $\pa B_{r/2}$, we have that the function $w-v$ satisfies the following conditions
\begin{equation*}
\begin{cases}
\mathcal{L} (w-v) \leq 0, & \textmd{in } A,\\
w-v \geq 0, & \textmd{on } \pa A.
\end{cases}
\end{equation*}
Hence, by maximum principle we have that $w-v \geq 0$ in $\overline{A}$, and from \eqref{eq 1 hopf migl} we obtain that
\begin{equation} \label{eq 2 hopf migl}
\min_{\overline{B}_{r/2}} w \leq \frac{e^{3\al r^2/4} -1}{\al r} \left(\frac{w((1-t/r)x_0)}{t} + 2\beta r e^{\beta r^2} \right),
\end{equation}
for $0<t<r/2$. As in the proof of Lemma \ref{lemma Harnack interior}, we use Theorems 8.17 and 8.18 in \cite{GT} to get
\begin{equation*}
\max_{\overline{B}_{r/2}} w \leq C_1 ( \min_{\overline{B}_{r/2}} w +\gamma),
\end{equation*}
and from \eqref{eq 2 hopf migl} we obtain \eqref{hopf migliorato 1} and \eqref{hopf migliorato 2}.
\end{proof}

We will use Lemma \ref{lemma hopf migliorato} in the following form.

\begin{lemma} \label{lemma hopf}
Let $E,\ T,\ u_0,\ u_1,\ M,$ and $x_0$ be as in Lemma $\ref{lemma Carleson}$, with 
\begin{equation*}
u_1-u_0 \geq 0 \quad \textmd{ in } E.
\end{equation*} 
Assume that there exists $B_r(c) \subset E$ with $x_0 \in \pa B_r(c) \cap T$.
Let
\begin{equation*}
\ell = \frac{c-x_0}{r}.
\end{equation*}
Then, there exists a constant $K_3$ such that
\begin{equation} \label{Hopf A}
\|u_1-u_0\|_{C^1(B_{r/4}(c))} \leq K_3 \left( \frac{(u_1-u_0)(x_0+t \ell)}{t} + \|H_1-H_0\|_{C^0(B_r(c))} \right),
\end{equation}
for every $t \in (0,r/2)$, and
\begin{equation} \label{Hopf B}
\|u_1-u_0\|_{C^1(B_{r/4}(c))} \leq K_3 \left( \frac{\pa (u_1-u_0)}{\pa \ell}(x_0) + \|H_1-H_0\|_{C^0(B_r(c))} \right),
\end{equation}
for $t=0$. The constant $K_3$ depends only on the dimension $k$, on $\ M,$ and $\rho$, and upper bound on $\|H_1-H_0\|_{C^0(B_r(c))}$.
\end{lemma}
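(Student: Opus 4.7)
I would deduce this lemma by specializing the general quantitative Hopf bound of Lemma \ref{lemma hopf migliorato} to the linear equation satisfied by $w := u_1 - u_0$, and then upgrading the resulting $C^0$ bound to a $C^1$ bound via the interior Schauder argument used for Lemma \ref{lemma Harnack interior}.

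The first step is to recast the equation. As recalled just before \eqref{Lw = H1-H0}, the difference $w$ satisfies $Lw = k(H_1-H_0)$ in $E$, with $L$ the divergence-form operator \eqref{Lw}. Expanding the divergence yields the non-divergence form
\begin{equation*}
Lw = a^{ij}(x) w_{x_ix_j} + b^i(x) w_{x_i}, \qquad b^i := \sum_{j=1}^k \partial_{x_j} a^{ij},
\end{equation*}
where $b^i$ is a smooth expression in $Du_t$ and $D^2 u_t$. The hypothesis $\|Du_j\|_{C^1(E)} \leq M$ then gives $|a^{ij}|, |b^i| \leq \Lambda$ for some $\Lambda = \Lambda(M)$, while \eqref{ellipticity Lw} gives ellipticity with $\lambda = 1$. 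Hence $L$ is an operator of the form \eqref{L cal}--\eqref{ellipt L cal} with constants depending only on $M$, and $|Lw| \leq \gamma$ with $\gamma := k\|H_1 - H_0\|_{C^0(B_r(c))}$.

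Next I apply Lemma \ref{lemma hopf migliorato} on the ball $B_r(c)$ after translating $c$ to the origin. By assumption $B_r(c) \subset E$ and $w \geq 0$ in $E$, so $w \geq 0$ in $B_r(c)$. In the translated frame the boundary point becomes $\tilde x_0 = -r\ell \in \partial B_r$; the inward normal to $\partial B_r$ at $\tilde x_0$ is exactly $\ell$, and the line segment $(1 - t/r)\tilde x_0 = -(r-t)\ell$ from $\tilde x_0$ to the origin corresponds to the segment $x_0 + t\ell$ in the original coordinates. Inequalities \eqref{hopf migliorato 1} and \eqref{hopf migliorato 2} therefore give
\begin{equation*}
\sup_{B_{r/2}(c)} w \leq C_1 \left( \frac{w(x_0 + t\ell)}{t} + \|H_1 - H_0\|_{C^0(B_r(c))} \right), \quad 0 < t \leq r/2,
\end{equation*}
together with the analogous bound in which $\partial_\ell w(x_0)$ replaces $w(x_0+t\ell)/t$ (corresponding to the limit $t \to 0^+$ when $w(x_0)=0$).

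Finally, I promote the sup bound on $B_{r/2}(c)$ to a $C^1$ bound on $B_{r/4}(c)$ by invoking the very same interior $C^{1,\alpha}$ estimate (Theorem 8.32 in \cite{GT}) already used in Lemma \ref{lemma Harnack interior}:
\begin{equation*}
\|w\|_{C^1(B_{r/4}(c))} \leq C_2 \left( \|w\|_{C^0(B_{r/2}(c))} + \|H_1 - H_0\|_{C^0(B_{r/2}(c))} \right),
\end{equation*}
with $C_2 = C_2(k,M)$. Chaining the two inequalities yields both \eqref{Hopf A} and \eqref{Hopf B}. I do not anticipate a serious obstacle: the only bookkeeping point is to verify that the ellipticity constants $\lambda,\Lambda$ and the bound on $\gamma$ fed into Lemma \ref{lemma hopf migliorato} are controlled purely by $k$, $M$, and the a priori bound on $\|H_1-H_0\|_{C^0}$, which follows from the $C^1$ control of $Du_j$ and the pointwise identity $|Lw| \leq \gamma$.
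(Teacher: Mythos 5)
Your argument is correct and follows essentially the same route as the paper: reduce to the uniformly elliptic equation for $w=u_1-u_0$, apply the quantitative Hopf bound of Lemma \ref{lemma hopf migliorato} on $B_r(c)$ (with $\ell$ the inward normal at $x_0$), and then upgrade the sup bound to a $C^1$ bound exactly as in Lemma \ref{lemma Harnack interior}. The only difference is that you spell out the divergence-to-nondivergence conversion and the interior $C^{1,\alpha}$ step, which the paper leaves implicit.
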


\begin{proof} 
As we have shown in the proof of Lemma \ref{lemma Harnack interior}, $w=u_1-u_0$ satisfies \eqref{Lw = H1-H0}, which is uniformly elliptic. Moreover, we notice that, by letting
$$\ga = \|H_1-H_0\|_{C^0(B_r(c))},$$
we have that
\begin{equation*}
Lw \leq \ga.
\end{equation*}
Hence, we can apply Lemma \ref{lemma hopf migliorato} and, by using Lemma \ref{lemma Harnack interior}, we conclude.
\end{proof}

\subsection{The symmetry result of Alexandrov}\label{subsection Alexandrov}
In order to make the paper self-contained, we give a sketch of the proof of the Soap Bubble Theorem by Alexandrov. This will be the occasion to set up some necessary notation.

Let $S$ be a $C^2$ regular, connected, closed hypersurface embedded in $\RR^{n+1}$, $n\geq 1$,  and let  $\Omega$ be the relatively compact domain of $\mathbb R^{n+1}$ bounded by $S$. 
Let $\om\in\RR^{n+1}$ be a unit vector and $\lam\in\RR$ be a parameter. For an arbitrary set $A$, we define the following objects:
\begin{equation}
\label{definitions}
\begin{array}{lll}
&\pi_{\lam}=\{ \xi \in \RR^{n+1}: \xi \cdot\om=\lam\}\ &\mbox{a hyperplane orthogonal to $\om,$}\\
&A^{\lam}=\{p\in A: p\cdot\om>\lam\}\ &\mbox{the right-hand cap of $A$},\\
&\xi^{\lam}=\xi-2(\xi \cdot\om-\lam)\,\om\ &\mbox{the reflection of $\xi$ about $\pi_{\lam},$}\\
&A_{\lam}=\{p\in\RR^{n+1}\,:\,p^{\lam}\in A^{\lam}\}\ &\mbox{the reflected cap about $\pi_{\lam}$},\\
&{\hat A}_\lam=\{p\in A :  p \cdot \om < \lam\} &\mbox{the portion of $A$ in the left-hand half plane.}
\end{array}
\end{equation}
Set $\mathcal{M}=\max\{p\cdot\om: p\in S \}$, the extent of $S$ in the direction $\om$; if $\lam<\mathcal{M}$ is close to $\mathcal{M}$, the reflected cap $\Om_\lam$ is contained in $\Om$. Set
\begin{equation}\label{m def}
m=\inf\{\mu: \Om_{\lam}\subset \Om \mbox{ for all } \lam\in(\mu,\mathcal{M})\}.
\end{equation}
Then for $\lambda = m$ at least one of the following two cases occurs:
 \begin{enumerate}
\item[(i)]
$S_m$ becomes internally tangent to $S$ at some
point $p\in S\setminus\pi_m;$
\item[(ii)]
$\pi_m$ is orthogonal to $S$ at some point $p\in S\cap\pi_m.$
\end{enumerate}

\begin{thmx}[Alexandrov Soup Bubble Theorem]
Let $S$ be a $C^2$-regular, closed, connected hypersurface embedded in  $\RR^{n+1}$. If the mean curvature $H$ of $S$ is constant, then $S$ is a sphere.
\end{thmx}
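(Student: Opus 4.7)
The plan is to prove the theorem by the method of moving planes, exactly as outlined in the introduction. I would fix an arbitrary unit direction $\om\in\RR^{n+1}$, use the notation of \eqref{definitions}, and consider the critical value $m$ defined in \eqref{m def}. The goal is to show that $\pi_m$ is a hyperplane of symmetry for $S$; doing so for every direction $\om$ then yields, via the center-of-mass / three-reflections argument sketched in the introduction, that $S$ is a round sphere.

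To show the symmetry with respect to $\pi_m$, I would argue by contradiction and analyze separately the two possible critical configurations (i) and (ii) listed just before the theorem. In case (i), pick the interior tangency point $p\in S\setminus\pi_m$ and let $p^m$ be its reflection, which lies on $S_m\cap S$. Choose the inward normal $\nu_p$ to $S$ at $p$: by the tangency, the reflected piece of $S$ near $p^m$ and the piece of $S$ near $p$ can both be written, in a ball $B_r\cap T_pS$ of suitable radius (using Lemma \ref{lemma bounds rho}), as graphs $u_0$ and $u_1$ over $T_pS$ with $u_0(0)=u_1(0)=0$, $Du_0(0)=Du_1(0)=0$, and, by construction of $m$, $u_1-u_0\geq 0$. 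Both $u_0,u_1$ solve the mean curvature equation \eqref{MCeq} with the same constant $H$, so $w:=u_1-u_0$ satisfies the linear uniformly elliptic equation \eqref{Lw = H1-H0} with zero right-hand side. The strong maximum principle (the qualitative version underlying Lemma \ref{lemma Harnack interior}) forces $w\equiv 0$ near the origin, i.e.\ $S$ and $S_m$ coincide in a neighborhood of $p$.

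In case (ii), the critical point $p$ lies on $\pi_m\cap S$ and $\pi_m\perp T_pS$. Here I choose $T_pS$ oriented so that $\om$ is tangent to $S$ at $p$, and write both $S$ and its reflection $S_m$ as graphs $u_0,u_1$ over a half-ball $E=(B_r\cap T_pS)\cap\{x\cdot\om>0\}$; on the flat part $T:=(B_r\cap T_pS)\cap\pi_m$ one has $u_1=u_0$ by definition of the reflection. Again $w=u_1-u_0\ge 0$ satisfies an elliptic equation with zero right-hand side, and by a second-order version of Hopf's boundary point lemma applied at the origin (which is a $C^2$ boundary point of $E$ with interior touching ball) together with the equality $\pa w/\pa\om(0)=0$ (forced by the orthogonality of $\pi_m$ to $S$ and hence by $Du_0(0)=Du_1(0)=0$ on the common tangent plane), one concludes $w\equiv 0$ in a neighborhood.

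In either case, the set $\{p\in S\cap\Om_m^c : p\in S_m\}$ is relatively open and nonempty, and it is also closed by continuity; since the connected component of $S$ intersecting $S_m$ is all of $S$ (by connectedness and a standard propagation argument, re-running the above dichotomy at any boundary point of the coincidence set), we conclude that $S$ is symmetric with respect to $\pi_m$. Applying this to every direction $\om$ and invoking the three-step argument recalled in the introduction (center of mass lies on every hyperplane of symmetry, hence every hyperplane through $\mathcal{O}$ is a symmetry hyperplane, hence $S$ is invariant under $O(n+1)$), we conclude that $S$ is a sphere. The main obstacle is case (ii): one must ensure that the graph representations $u_0,u_1$ exist up to and across the flat boundary piece $T$, verify the compatibility conditions $u_0=u_1$ and $Du_0=Du_1$ at the origin coming from the orthogonality of $\pi_m$, and apply Hopf's lemma in a setting where the boundary condition is not a Dirichlet datum but a geometric reflection condition.
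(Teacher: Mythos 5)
Your proposal follows essentially the same moving-planes argument as the paper: writing $S$ and the reflected cap as graphs over the common tangent plane, the strong maximum principle for $Lw=0$ in case (i), Hopf's lemma on the half-ball with flat boundary on $\pi_m$ in case (ii), an open--closed propagation argument to get symmetry about $\pi_m$, and the center-of-mass/composition-of-reflections step to conclude. The only cosmetic difference is that no ``second-order'' (Serrin corner) version of Hopf's lemma is needed in case (ii): the flat boundary piece is smooth at the origin and admits an interior touching ball, so since $\nabla w(0)=0$ the classical first-order Hopf lemma already forces $w\equiv 0$, exactly as in the paper.
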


\begin{proof}
Let $\om$ be a fixed direction. We apply the method of moving planes in the direction $\om$ and we find a critical position for $\lam = m$.

If Case (i) occurs, then we locally write $S_m$ and $S$ as graphs of function $u_1$ and $u_0$, respectively, over $B_r \cap T_p S$ (which coincides with $T_p S_m$), where $p$ is the tangency point. It is clear that $w = u_1-u_0$ is non-negative and, since $H$ is constant, we have that $w$ satisfies
\begin{equation*}
Lw=0, \quad  \textmd{in } \ B_r \cap T_p S,
\end{equation*}
for some $r>0$, and where $L$ is given by \eqref{Lw}. Since $w(0)=0$, by the strong maximum principle we obtain that $w \equiv 0$ in $ B_r \cap T_p S$, that is $S$ and $S_m$ coincides in an open neighborhood of $p$.

If Case (ii) occurs, then we locally write $S_m$ and $S$ as graphs of function $u_1$ and $u_0$, respectively, over $T_p S \cap \{x \cdot \om \leq m\}$. As for case (i), we find that there exists $r>0$ such that
\begin{equation*}
\begin{cases}
Lw = 0, & \textmd{in } \ B_r \cap T_p S \cap \{x\cdot \om < m\}, \\
w=0, & \textmd{on } \ B_r \cap T_pS \cap \{x\cdot \om = m\}.
\end{cases}
\end{equation*}
Since $ \nabla w (0) = 0$ 
and from the Hopf Lemma (see for instance \cite{GT}) we obtain that $w\equiv 0$ in $B_r \cap T_pS \cap \{x\cdot \om \leq m\}$.

Hence, in both cases (i) and (ii) we have that the set of tangency points (that is those points for which case (i) or (ii) occur) is open. Since it is also closed and non-empty we must have that $S_m=\hat{S}_m$, that is $S$ is symmetric about the hyperplane $\pi_m$. Since $\om$ is arbitrary, we find that $S$ is symmetric in every direction.

Up to a translation, we can assume that the origin $O$ is the center of mass of $S$. Since $O$ belongs to every axis of symmetry and every rotation can be written as a composition of reflections, we have that $S$ is invariant under rotations, which implies that $S$ is a sphere.
\end{proof}

\subsection{Curvatures of projected surfaces} \label{subsect Luigi}
Before giving the results of this subsection, we need to recall some basic facts about hypersurfaces in $\mathbb R^{n+1}$, in particular about the interplay between the normal and the principal curvatures.
 Let $U$ be an orientable hypersurface of class $C^2$ embedded in $\mathbb R^{n+1}$ (which in the proof of Theorem \ref{main} will be an open set of the surface $S$). The choice of an orientation on $U$ is equivalent to the choice of a Gauss map $\nu\colon U\to \mathbb S^n$ (in this general context there is no a canonical orientation). Fixed a point $q\in U$, we denote by $W_q\colon T_qU\to T_qU$ the shape operator $W_q=-d\nu_q$. $W_q$ is a symmetric operator and its eigenvalues $\kappa_i(q)$ are the principal curvatures of $U$ at $q$. We assume that $\kappa_1(q)\leq \kappa_2(q)\leq\dots \leq \kappa_n(q)$. The first and the last principal curvature can be obtained as minimum and maximum of the normal curvature. Here we recall that, given a non-zero vector $v\in T_qU$, its normal curvature $\kappa(q,v)$ is defined as
$$
\kappa(q,v)=\frac{1}{|v|^2}\, W_q(v)\cdot v\,.
$$
$\kappa(q,v)$ can be alternatively written in term of curves as
$$
\kappa(q,v)=\frac{1}{|\dot\alpha(0)|^2}\,  \nu_{\alpha(0)}\cdot \ddot\alpha(0)
$$
where $\alpha\colon I\to U$ is an arbitrary curve satisfying $\alpha(0)=0$, $\dot\alpha(0)=v$.

In order to perform a quantitive study of the moving planes, we need to handle the following situation: given a hypersurface $U$  of class $C^2$ in $\RR^{n+1}$, we consider its intersection $U'$ with an affine hyperplane $\pi_1$ (in the proof of Theorem \ref{thm approx symmetry 1 direction} $\pi_1$ will be the critical hyperplane in the direction $\om$). If $\pi_1$ intersects $U$ transversally, $U'=U\cap \pi_1$ is a hypersurface of class $C^2$ of $\pi_1$ and we consider its projection $U''$ onto another hyperplane $\pi_2$ of $\RR^{n+1}$ (which will be the tangent hyperplane to the reflected cap at some point which is close to the critical hyperplane). An example in $\RR^3$ is shown in Figure \ref{figura1}.
The next  two propositions allow us to control the principal curvature of $U''$ in terms of the principal curvature of $U$ and the normal vectors to $\omega_1$ and $\omega_2$. 

\begin{figure}[h]
\centering
 \includegraphics[width=0.4\textwidth]{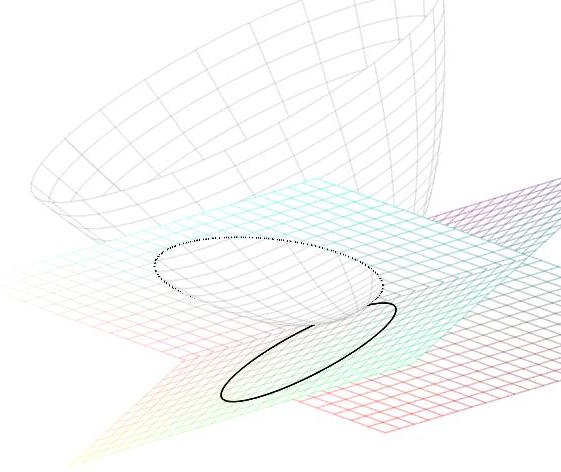}
  \caption{In the figure $U$ is the parabololid $z=x^2+y^2$, $\pi_1$ is the affine plane $z=2+8y$ and $\pi_2$ is the plane $z=0$. In this case $U'$ is the dashed ellipse in $\pi_1$, while $U''$ is the circle projected in $\pi_2$.} \label{figura1}
\end{figure}

\begin{proposition}\label{prop Luigi I}
 Let $U$ be an orientable hypersurface of class $C^2$ embedded in $\mathbb R^{n+1}$ with principal curvatures $\kappa_j$, $j=1,\ldots,n$, and Gauss map $\nu$. 
Let $\pi$ be an hyperplane of $\RR^{n+1}$ intersecting $U$ transversally and let $U'=U\cap \pi$. Then $U'$ is  an orientable hypersurface of class $C^2$ embedded in $\pi$ and, once a Gauss map $\nu'\colon U'\to \mathbb S^{n-1}$ is fixed, its principal curvatures $\kappa_i'$ satisfy 
\begin{equation} \label{bound curv I}
\frac{1}{ \nu_q\cdot \nu'_q}\kappa_1(q)\leq \kappa'_i(q)\leq \frac{1}{ \nu_q\cdot \nu'_q }\kappa_{n}(q) \,.
\end{equation}
for every $q\in U'$ and $i=1,\dots,n-1$.
\end{proposition}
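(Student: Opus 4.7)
My plan is to work directly with the normal curvature characterization of the extreme principal curvatures and to relate the normal curvatures of $U$ and $U'$ at a common tangent direction via a simple orthogonal decomposition of the Gauss map.

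First I will check the regularity statement. Since $\pi$ intersects $U$ transversally, $U'=U\cap\pi$ is a $C^2$ embedded hypersurface of $\pi$ whose tangent space at $q$ is $T_qU'=T_qU\cap\pi_0$, where $\pi_0$ denotes the linear hyperplane parallel to $\pi$. This is standard from the implicit function theorem and needs only a brief justification.

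The core of the proof is a pointwise identity. Fix $q\in U'$ and a unit vector $v\in T_qU'$, and choose a curve $\alpha\colon I\to U'$ with $\alpha(0)=q$, $\dot\alpha(0)=v$. Since $\alpha$ lies in the affine hyperplane $\pi$, one has $\ddot\alpha(0)\in\pi_0$. Now let $N$ be a unit normal to $\pi$. The two-dimensional normal space $(T_qU')^{\perp}\subset\mathbb R^{n+1}$ contains $N$ and $\nu_q$ (the latter because $T_qU'\subset T_qU$) and also $\nu'_q$; since $\nu'_q\in\pi_0$ is perpendicular to $N$, the pair $\{\nu'_q,N\}$ is an orthonormal basis of $(T_qU')^{\perp}$. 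Hence
\begin{equation*}
\nu_q=(\nu_q\cdot\nu'_q)\,\nu'_q+(\nu_q\cdot N)\,N.
\end{equation*}
Dotting with $\ddot\alpha(0)$ and using $\ddot\alpha(0)\cdot N=0$, I obtain
\begin{equation*}
\nu_q\cdot\ddot\alpha(0)=(\nu_q\cdot\nu'_q)\,\nu'_q\cdot\ddot\alpha(0),
\end{equation*}
which by the curve-based formula for normal curvature quoted in the preceding paragraph means $\kappa(q,v)=(\nu_q\cdot\nu'_q)\,\kappa'(q,v)$ for every unit $v\in T_qU'$.

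To finish, I invoke the min--max characterization of principal curvatures. Each $\kappa'_i(q)$ equals $\kappa'(q,v)$ for some unit eigenvector $v\in T_qU'\subset T_qU$, and for that $v$ one has $\kappa_1(q)\le\kappa(q,v)\le\kappa_n(q)$. Dividing by $\nu_q\cdot\nu'_q$ (which is positive under the implicit orientation convention, as $\nu_q$ and $\nu'_q$ can be chosen to lie in the same half of $(T_qU')^{\perp}$) yields the claimed double inequality. The only real subtlety is keeping track of orientations so that $\nu_q\cdot\nu'_q>0$; otherwise the identity still holds but the roles of $\kappa_1$ and $\kappa_n$ would need to be swapped. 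No delicate estimates are needed, and the argument is entirely linear-algebraic once the curve $\alpha$ is in place.
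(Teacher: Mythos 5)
Your proposal is correct and follows essentially the same route as the paper: both decompose $\nu_q$ in the orthonormal basis $\{\nu'_q, N\}$ (with $N=\omega$ the hyperplane normal) of $(T_qU')^{\perp}$, dot with $\ddot\alpha(0)\in\pi_0$ to obtain $\kappa(q,v)=(\nu_q\cdot\nu'_q)\,\kappa'(q,v)$, and then squeeze the normal curvature of $U$ between $\kappa_1$ and $\kappa_n$. Your remark on the sign of $\nu_q\cdot\nu'_q$ is consistent with the paper's choice of Gauss map in \eqref{nu primo}, which makes this product positive.
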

\begin{proof}
First of all we observe that  $U'$ is of class $C^2$ by the implicit function theorem and it is orientable since the map $\nu'\colon U'\to \mathbb S^{n-1}$
defined by 
\begin{equation} \label{nu primo}
\nu'_q=-*(*(\nu_q\wedge \omega)\wedge \omega),
\end{equation}
is a Gauss map on $U'$, where by $*$ we denote the Hodge \lq\lq star\rq\rq operator in $\RR^{n+1}$ computed with respect to the standard metric and the standard orientation.

In order to prove \eqref{bound curv I}: fix $q\in U'$ and consider an  arbitrary unitary vector 
$v\in T_qU'$. Let $\kappa(q,v)$ be normal curvature of $(q,v)$ in $U$. Then
$$
\kappa(q,v)=\nu_q\cdot \ddot \alpha(0)
$$
where $\alpha$ is an arbitrary smooth curve in $U'$ parametrized by arc length and such that  $\alpha(0)=q$ and $\dot \alpha(0)=v$. Since $\nu_q$ is orthogonal to $T_qU'$,  it belongs to the plane generated by $\omega$ and $\nu'_q$ and we can write
$$
\nu_q=(\nu'_q\cdot \omega)\, \omega+( \nu_q\cdot \nu'_q) \,\nu'_q\,.
$$
Therefore
$$
\kappa(q,v)=(\nu_q\cdot \ddot \alpha(0))=(\nu_q\cdot \nu'_q)( \nu'_q\cdot \ddot \alpha(0)) =(\nu_q\cdot \nu'_q) \kappa'(q,v),
$$
where $\kappa'(q,v)$ is the normal curvature of $U'$ in $(q,v)$, and the claim follows.
\end{proof}

Note that in Proposition \ref{prop Luigi I}, if we chose as Gauss map the one defined as in \eqref{nu primo} we have  
$$
(\nu_q\cdot \nu'_q)=-*(*(\nu_q\wedge \omega)\wedge \omega)\cdot\nu_q=- *(\nu_q\wedge \omega)\wedge \omega\cdot*\nu_q\,.
$$
Now we choose the positive oriented orthonormal basis of $\mathbb R^{n+1}$
$\{e_1,\dots,e_{n},\nu_q\}$ where the first $n$-vectors are an othonormal basis of $T_{q}U$. Then we have
$$
*(\nu_q\wedge \omega)=-(\omega\cdot e_{n}) \,e_1\wedge \dots \wedge e_{n-1}\,,\quad  *\nu_q= e_1\wedge \dots \wedge e_{n},
$$
and
$$
 \nu_q\cdot \nu'_q=(\omega\cdot e_{n})  ( e_1\wedge \dots \wedge e_{n-1}\wedge \omega)\cdot ( e_1\wedge \dots \wedge e_{n})=( \omega\cdot e_{n})^2=1-(\nu_q\cdot\omega)^2 \,,
$$
i.e. 
\begin{equation}\label{prodottodeinormali}
\nu_q\cdot \nu'_q=1-(\nu_q\cdot\omega)^2.
\end{equation}
Therefore, when $\nu_q'$ is given by \eqref{nu primo}, \eqref{bound curv I} reads as
\begin{equation} \label{bound curv Iuigi}
\frac{1}{1-(\nu_q\cdot\omega)^2  }\kappa_1(q)\leq \kappa'_i(q)\leq \frac{1}{ 1-(\nu_q\cdot\omega)^2  }\kappa_{n}(q) \,.
\end{equation}
for $i=1,\dots,n-1$.

\begin{proposition} \label{prop Luigi II}
Let $\omega_1$ and $\omega_2$ be unit vectors in  $\mathbb R^{n+1}$, denote by $\pi_1$ an hyperplane orthogonal to $\omega_1$, and let 
$\pi_2$ be the hyperplane  orthogonal to $\omega_2$ passing through the origin of $\RR^{n+1}$. Let $U'$ be a $C^2$ regular oriented hypersurface of $\pi_1$ such that $\omega_2$ is not tangent to $U'$ at any point. Denote by $\kappa'_i$, for $i=1,\dots, n-1$,  the principal curvatures of $U'$ and denote by $\nu'$ the normal vector to $U'$.
Then the orthogonal projection $U''$ of  $U'$ onto $\pi_2$ is a $C^2$-regular hypersurface of $\pi_2$ with a canonical orientation. 
Moreover, for any $q\in U'$ we have 
\begin{equation}\label{bound curvatures II}
|\kappa_i''({\rm pr} (q))| \leq \frac{ |\om_1\cdot  \om_2|  }{\left[(\omega_1\cdot \omega_2)^2+(\omega_2\cdot \nu_q')^2\right]^{\frac{3}{2}}} \max\{|\kappa_1'(q)|,|\kappa_{n-1}'(q)|\},
\end{equation}
for every $i=1,\dots,n-1$, 
where ${\rm pr} (q)$ is the projection of $q$ onto $\pi_2$ and $\{\kappa''_i\}$ are the principal curvature of $U''$. 
\end{proposition}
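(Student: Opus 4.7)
The plan is to (i) identify the unit normal $\nu''_{q''}$ to $U''$ at $q''=\mathrm{pr}(q)$ in terms of $\nu'_q$, $\om_1$ and $\om_2$, and then (ii) express any normal curvature of $U''$ in terms of a normal curvature of $U'$ by lifting a curve in $U''$ to a curve in $U'$. Taking absolute values and optimizing over unit tangent vectors then yields \eqref{bound curvatures II} at once.

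\textbf{Regularity and unit normal.} The orthogonal projection $\mathrm{pr}\colon\RR^{n+1}\to\pi_2$, $\mathrm{pr}(\xi)=\xi-(\xi\cdot\om_2)\om_2$, is linear with kernel $\RR\om_2$. Restricted to $U'$ it is a local immersion at $q$ precisely when $T_qU'\cap\RR\om_2=\{0\}$, which is exactly the hypothesis that $\om_2$ is not tangent to $U'$; hence $U''$ is locally a $C^2$ hypersurface of $\pi_2$. A key observation is that for every $w\in\RR^{n+1}$ and every $\eta\in\om_2^\perp$ one has $\eta\cdot\mathrm{pr}(w)=\eta\cdot w$, so a unit vector $\eta\in\pi_2=\om_2^\perp$ is orthogonal to $T_{q''}U''=\mathrm{pr}(T_qU')$ iff it is orthogonal to $T_qU'$ itself. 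Since the orthogonal complement of $T_qU'$ in $\RR^{n+1}$ is $\mathrm{span}\{\om_1,\nu'_q\}$, we can write $\nu''_{q''}=b\,\nu'_q+a\,\om_1$; imposing $\nu''_{q''}\cdot\om_2=0$ and $|\nu''_{q''}|=1$ gives, up to an overall sign (which fixes a canonical orientation on $U''$),
$$
b=\frac{|\om_1\cdot\om_2|}{\sqrt{(\om_1\cdot\om_2)^2+(\nu'_q\cdot\om_2)^2}},\qquad a=-\frac{(\nu'_q\cdot\om_2)\,\mathrm{sgn}(\om_1\cdot\om_2)}{\sqrt{(\om_1\cdot\om_2)^2+(\nu'_q\cdot\om_2)^2}},
$$
the denominator being nonzero precisely because $\om_2\notin T_qU'$.

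\textbf{Transfer of the normal curvature.} Fix a unit $v\in T_qU'$ and a curve $\al\colon I\to U'$ with $\al(0)=q$ and $\dot\al(0)=v$. Then $\al''=\mathrm{pr}\circ\al$ lies in $U''$, with $\dot\al''(0)=\mathrm{pr}(v)$ of squared length $|\dot\al''(0)|^2=1-(v\cdot\om_2)^2$. Two simple facts link the normal curvatures: since $\al(t)$ remains in the affine hyperplane $\pi_1$, $\ddot\al(0)\cdot\om_1=0$, so only the $\nu'_q$-component of $\nu''_{q''}$ contributes to $\nu''_{q''}\cdot\ddot\al(0)$; and since $\nu''_{q''}\perp\om_2$, the projection does not affect that dot product, i.e.\ $\nu''_{q''}\cdot\ddot\al''(0)=\nu''_{q''}\cdot\ddot\al(0)=b\,(\nu'_q\cdot\ddot\al(0))=b\,\kappa'(q,v)$. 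Dividing by $|\dot\al''(0)|^2$ yields the key identity
$$
\kappa''\!\Big(q'',\tfrac{\dot\al''(0)}{|\dot\al''(0)|}\Big)=\frac{b\,\kappa'(q,v)}{1-(v\cdot\om_2)^2},
$$
which parametrizes all normal curvatures of $U''$ at $q''$ by unit tangent vectors to $U'$ at $q$.

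\textbf{Conclusion and main point.} Completing $\{\om_1,\nu'_q\}$ to an orthonormal basis $\{\om_1,\nu'_q,e_1,\dots,e_{n-1}\}$ of $\RR^{n+1}$ with $\{e_i\}$ an orthonormal basis of $T_qU'$, the identity $|\om_2|=1$ forces the squared length of the tangential component of $\om_2$ to equal $1-(\om_1\cdot\om_2)^2-(\nu'_q\cdot\om_2)^2$. For any unit $v\in T_qU'$, Cauchy--Schwarz gives $(v\cdot\om_2)^2\leq 1-(\om_1\cdot\om_2)^2-(\nu'_q\cdot\om_2)^2$, hence $1-(v\cdot\om_2)^2\geq(\om_1\cdot\om_2)^2+(\nu'_q\cdot\om_2)^2$. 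Inserting this lower bound and the explicit value of $|b|$ into the identity of the previous step, together with $|\kappa'(q,v)|\leq\max\{|\kappa'_1(q)|,|\kappa'_{n-1}(q)|\}$, produces \eqref{bound curvatures II} for every normal curvature of $U''$ at $q''$, and thus for each principal curvature $\kappa''_i$. The argument is essentially a geometric computation rather than a deep statement; the only delicate point is keeping track of which inner products survive the projection $\mathrm{pr}$, and the exponent $3/2$ in \eqref{bound curvatures II} arises from combining the factor $|b|\sim|\om_1\cdot\om_2|/[(\om_1\cdot\om_2)^2+(\nu'_q\cdot\om_2)^2]^{1/2}$ with the lower bound on $1-(v\cdot\om_2)^2$.
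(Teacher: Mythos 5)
Your proof is correct and follows essentially the same route as the paper: you lift curves of $U''$ to curves of $U'$, use $\nu''_{{\rm pr}(q)}\cdot\om_2=0$ and $\ddot\alpha\cdot\om_1=0$ to transfer the normal curvature, and then bound $1-(v\cdot\om_2)^2\geq(\om_1\cdot\om_2)^2+(\om_2\cdot\nu'_q)^2$ by Cauchy--Schwarz, which is exactly the paper's argument. The only difference is in execution: you obtain $\nu''_{{\rm pr}(q)}=b\,\nu'_q+a\,\om_1$ with $|b|=|\om_1\cdot\om_2|\left[(\om_1\cdot\om_2)^2+(\om_2\cdot\nu'_q)^2\right]^{-1/2}$ by elementary orthogonal decomposition of $(T_qU')^{\perp}={\rm span}\{\om_1,\nu'_q\}$, where the paper derives the same factor through Hodge-star and wedge-product computations for a local parametrization, and your write-up states the inequality for $1-(v\cdot\om_2)^2$ with the correct orientation of the estimate (the displayed ``$\leq$'' at that step of the paper is a slip).
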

\begin{proof}
If $X$ is a local positive oriented parametrisation of $U'$, then $Y=X-(X\cdot \omega_2)\omega_2$ is a local parametrisation of $U''$, and 
$$
\nu''\circ Y:={\rm vers}(*(Y_1\wedge Y_2\wedge \dots \wedge Y_{n-1}\wedge \omega_2))
$$
defines a Gauss map for $U''$, where $Y_k$ is the $k^{th}$ derivative of $Y$ with respect to the coordinates of its domain. Therefore $U''$ is a $C^2$-regular hypersurface of $\pi_2$ oriented by the map $\nu''$. 

Now we prove inequalities \eqref{bound curvatures II}. Fix a point $q\in U'$ and let ${\rm pr} (q)=q-( q\cdot \omega_2) 	\,\omega_2$ be its projection onto $U''$.
Let $X$ be a local positive oriented parametrization of $U'$ around $q$ and $Y=X-(X\cdot\omega_2)\omega_2$ be the induced parametrization of $U''$ around ${\rm pr}(q)$.  
Let $\beta \colon (-\delta,\delta)\to U''$ be an arbitrary regular curve contained in $U''$ such that  $\beta(0)={\rm pr} (q)$ and let
$$
v=\frac{\dot \beta(0)}{|\dot \beta(0)|}\,,\quad g=\frac{1}{|\dot \beta |^2}\nu''_\beta\cdot \ddot\beta\,.
$$
Then 
$$
g(0)=\kappa''({\rm pr}(q),v)\,,
$$
where $\kappa''({\rm pr}(q),v)$ is the normal curvature of $U''$ at $(q,v)$. 
The curve $\beta$ can be seen as the projection of a regular curve $\alpha$ in $U'$ passing through $p$. Since
$\nu''_\beta$ is orthogonal to $\omega_2$ we have
 $$
g=\frac{1}{|\dot \beta|^2}\nu''_\beta\cdot \ddot\alpha\,.
$$
Note that, since
$$
Y_k=X_k-( X_k\cdot\omega_2) \omega_2\,,
$$
then we have
$$
\nu''\circ Y={\rm vers}(*(X_1\wedge X_2\wedge \dots \wedge X_{n-1}\wedge \omega_2))
$$
and
$$
g=\frac{(*(X_1(\tilde \alpha)\wedge \dots \wedge X_{n-1}(\tilde \alpha)\wedge \omega_2)\cdot \ddot\alpha}{|\dot \beta|^2 | X_1(\tilde \alpha)\wedge \dots \wedge X_{n-1}(\tilde \alpha)\wedge \omega_2)|}\,.
$$
Now, it is simply to prove that
$$
(*(X_1(\tilde \alpha)\wedge \dots \wedge X_{n-1}(\tilde \alpha)\wedge \omega_2)\cdot \ddot\alpha=
(\omega_1\cdot\omega_2)*(X_1(\tilde \alpha)\wedge \dots \wedge X_{n-1}(\tilde \alpha)\wedge \omega_1)\cdot \ddot\alpha\,,
$$
and therefore
$$
g=\frac{\omega_1\cdot\omega_2}{|\dot \beta|^2 }\,\,\frac{(*(X_1(\tilde \alpha)\wedge \dots \wedge X_{n-1}(\tilde \alpha)\wedge \omega_1))\cdot \ddot\alpha}{ |X_1(\tilde \alpha)\wedge \dots \wedge X_{n-1}(\tilde \alpha)\wedge \omega_2)|}\,,
$$
%
which implies
$$
g= (\nu'_\alpha\cdot\ddot\alpha)\frac{\omega_1\cdot\omega_2}{|\dot \beta|^2 }\,\,\frac{|X_1(\tilde \alpha)\wedge \dots \wedge X_{n-1}(\tilde \alpha)\wedge \omega_1|}{ |X_1(\tilde \alpha)\wedge \dots \wedge X_{n-1}(\tilde \alpha)\wedge \omega_2|}\,.
$$
%
We may assume that $\alpha$ is parametrised by arc length and so
$$
|\dot \beta|^2=1-(\dot \alpha\cdot \omega_2)^2\,,
$$
which implies
$$
g=(\nu'_\alpha\cdot\ddot\alpha)\frac{ \omega_1\cdot \omega_2}{1-( \dot \alpha\cdot \omega_2)^2}
\frac{|X_1(\tilde \alpha)\wedge \dots \wedge X_{n-1}(\tilde \alpha)\wedge \omega_1|}{ |X_1(\tilde \alpha)\wedge \dots \wedge X_{n-1}(\tilde \alpha)\wedge \omega_2|}\,.
$$
Moreover a standard computation yields that 
$$
\frac{|X_1(\tilde \alpha)\wedge \dots \wedge X_{n-1}(\tilde \alpha)\wedge \omega_1|}{ |X_1(\tilde \alpha)\wedge \dots \wedge X_{n-1}(\tilde \alpha)\wedge \omega_2|}=\frac{1}{(1-| \omega_2-(\omega_2\cdot \nu'_\alpha)\nu'_\alpha|^2)^{1/2}}\,,
$$
and hence
$$
g(0)=\kappa'(q,\dot\alpha(0))
\frac{ \omega_1\cdot \omega_2}{(1-| \omega_2-(\omega_2\cdot \nu'_q)\nu'_q|^2)^{1/2}}\,\frac{1}{1-( \dot \alpha(0)\cdot \omega_2)^2}\,,
$$
where $\kappa'(q,\dot\alpha(0))$ is the normal curvature of $U'$ at $(q,\dot\alpha(0))$. Therefore 
$$
\kappa''({\rm pr}(q),v)=\kappa'(q,\dot\alpha(0))
\frac{ \omega_1\cdot \omega_2}{(1-| \omega_2-(\omega_2\cdot \nu'_q)\nu'_q|^2)^{1/2}}\,\frac{1}{1-( \dot \alpha(0)\cdot \omega_2)^2}\,.
$$
In particular 
\begin{equation}\label{k1}
\kappa''_1({\rm pr} (q))=\kappa'_1(q)
\frac{ \omega_1\cdot\omega_2}{(1-| \omega_2-(\omega_2\cdot \nu'_q)\nu'_q|^2)^{1/2}}\inf_{v\in \mathbb S^{n-1}_q} \frac{1}{1-( v\cdot \omega_2)^2}\,,
\end{equation}
and
\begin{equation}\label{k2}
 \kappa''_{n-1}({\rm pr} (q))= \kappa'_{n-1}(p)\frac{\omega_1\cdot\omega_2}{(1-| \omega_2-(\omega_2\cdot \nu'_q)\nu'_q|^2)^{1/2}}\sup_{v\in  \mathbb S^{n-1}_q} \frac{1}{1-( v\cdot \omega_2)^2}\,,
\end{equation}
where $ \mathbb S^{n-1}_q=\{v\in T_qU'\,\,:\,\,|v|=1\}$. 
Now if $v\in  \mathbb S^{n-1}_q$, we have 
$$
1-( v\cdot \omega_2)^2\leq 1-| \omega_2-(\omega_2\cdot \nu'_q)\nu'_q|^2\,.
$$
Therefore  
$$
|\kappa_i''({\rm pr} (q))| \leq \frac{ |\om_1\cdot  \om_2|  }{(1-| \omega_2-(\omega_2\cdot \nu'_q)\nu'_q|^2)^{3/2}} \max\{|\kappa_1'(q)|,|\kappa_{n-1}'(q)|\},
$$
for every $i=1,\dots,n-1$. Finally, since $\RR^{n+1}=T_qU'\oplus \langle \omega_1\rangle \oplus \langle \nu'_q\rangle$, we have
$$
1-| \omega_2-(\omega_2\cdot \nu'_q)\nu'_q|^2=(\omega_1\cdot\omega_2)^2+(\omega_2\cdot \nu_q')^2
$$
and the claim follows. 
%
\end{proof}

\section{Technical Lemmas} \label{section technical lemmas}
Let $S$ be a connected closed $C^2$ regular hypersurface embedded in $\RR^{n+1}$ and let $\rho$ be the radius of the uniform touching sphere.
%
%

%

Let $S_m$ and $\pi_m$ be as in \eqref{definitions} and let $\pa S_m=S \cap \pi_m$. It will be useful to define the following set
\begin{equation}\label{Gamma m t}
S_m^\delta = \{p\in S_m:\ d_S(p,\pa S_m) > \delta \},
\end{equation}
for $\de>0$.

\begin{lemma} \label{lemma Ga t}
Let $0<\de<\rho$ and set $ \si = \rho \sin(\de /\rho)$. Then the following facts hold:
\begin{itemize}
\item[(i)] For any $p \in S_m^\de$ we have 
$\mathcal{U}_{\sigma}(p) \subset S_m.$

\vspace{0.1cm}
\item[(ii)] For any $q \in S_m \setminus S_m^\de$ there exists $p \in \pa S_m$ and $x\in B_{\de}\cap T_pS$ such that
\begin{equation*}
q=p+x+ u(x) \nu_p.
\end{equation*}
\end{itemize}
Here $u$ and $\mathcal U$ are as in \eqref{urp}.
\end{lemma}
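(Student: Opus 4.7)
The plan is to reduce both parts of the lemma to the local graph parametrization of Lemma \ref{lemma bounds rho} combined with the arc-length estimate of Lemma \ref{lemma bound on d Ga}, applied to the $C^2$ hypersurface $S_m$. Being (locally) a reflection of the right cap $S^m\subset S$ about $\pi_m$, $S_m$ inherits the same uniform touching-ball radius $\rho$, so these two lemmas apply to it verbatim. The choice $\sigma=\rho\sin(\delta/\rho)$ is calibrated precisely so that $\rho\arcsin(\sigma/\rho)=\delta$.

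For part (i), I would fix $p\in S_m^\delta$ and take an arbitrary $q\in\mathcal{U}_\sigma(p)$, written as $q=p+x+u(x)\nu_p$ with $|x|\leq\sigma$. Mimicking the computation in the proof of Lemma \ref{lemma bound on d Ga}, the canonical curve $\gamma(t)=p+tx+u(tx)\nu_p$, $t\in[0,1]$, has length at most $\rho\arcsin(|x|/\rho)\leq\rho\arcsin(\sigma/\rho)=\delta$. If $q$ were not in $S_m$, then $\gamma$ would cross $\partial S_m$ at some first time $t_0\in(0,1]$, giving $d_S(p,\gamma(t_0))\leq L(\gamma|_{[0,t_0]})\leq\delta$, which contradicts $p\in S_m^\delta$. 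Hence $q\in S_m$.

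For part (ii), given $q\in S_m\setminus S_m^\delta$ one has $d_S(q,\partial S_m)\leq\delta$; since $\partial S_m=S\cap\pi_m$ is compact, there exists $p\in\partial S_m$ with $d_S(p,q)\leq\delta<\rho$. Once I verify that $q\in\mathcal{U}_\rho(p)$, Lemma \ref{lemma bounds rho} provides the representation $q=p+x+u(x)\nu_p$, and the first inequality in Lemma \ref{lemma bound on d Ga} gives $|x|\leq d_S(p,q)\leq\delta$, i.e.\ $x\in B_\delta\cap T_pS$. To place $q$ in the chart, fix $\ep>0$ with $\delta+\ep<\rho$ and choose an arc-length parametrized curve $\alpha:[0,L]\to S_m$ from $p$ to $q$ with $L\leq \delta+\ep$. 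Let $\tilde\alpha(s)$ denote the orthogonal projection of $\alpha(s)$ onto $T_pS$. Then $|\dot{\tilde\alpha}(s)|\leq|\dot\alpha(s)|=1$, so $|\tilde\alpha(s)|\leq s<\rho$ for every $s\in[0,L]$. A standard continuation argument then shows that $\alpha$ never exits $\mathcal{U}_\rho(p)$, since exiting the chart would force $|\tilde\alpha(s)|=\rho$.

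The main obstacle I anticipate is the second part: one must be careful that the (nearly) distance-realizing curves from $p$ to $q$ stay inside the single graph neighborhood $\mathcal{U}_\rho(p)$, for otherwise the graph representation of Lemma \ref{lemma bounds rho} cannot be invoked to extract the required $x$. The projection bound $|\tilde\alpha(s)|\leq s$, together with the strict inequality $\delta<\rho$, keeps the curve away from the chart boundary $\{|x|=\rho\}$ and makes the continuation argument go through without additional assumptions.
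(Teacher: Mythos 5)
Your proof is correct and takes essentially the same route as the paper: part (i) rests on the arc-length bound of Lemma \ref{lemma bound on d Ga} together with the definition of $S_m^\delta$ (the paper phrases this as a triangle inequality for $d_S$ rather than a first-crossing argument), and part (ii) on the nearest point of $\pa S_m$, the projection bound $|x|\le d_S(p,q)<\rho$, and the graph representation of Lemma \ref{lemma bounds rho}. Your continuation argument keeping the connecting curve inside the chart $\mathcal U_\rho(p)$ simply makes explicit a step the paper treats as immediate.
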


\begin{proof}
\begin{enumerate}
\item[(i)] Let $x \in B_\sigma\cap T_pS$ and let $q=p+x+ u(x) \nu_p$. Since
\begin{equation*}
d_S(q,\pa S_m) \geq d_S(p,\pa S_m) - d_S(p,q),
\end{equation*}
\eqref{bound on d Ga above} implies
\begin{equation*}
d_S(q,\pa S_m) \geq \de - \rho \arcsin \frac{|x|}{\rho}.
\end{equation*}
The assumption $|x|< \si$ implies the thesis.

\vspace{0.3cm}
\item[(ii)] Let $p \in \pa S_m$ be such that $d_S(q,\pa S_m) = d_S(p,q)$, and let $x$ be the orthogonal projection of $q$ onto $T_pS$. Since $|x| \leq d_S(p,q)<\de$ and $\de<\rho$, then $|x|<\rho$ and Lemma \ref{lemma bounds rho} implies the statement.
\end{enumerate}
\end{proof}

In the next lemma we show that any two points in $S_m^\de$ can be joined by a piecewise geodesic curve and we give a bound on its length. An analogous lemma was proved in \cite{ABR} in the special case when $S_m^\de$ is contained in a hyperplane.

\begin{lemma} \label{lemma piecewise geodesic}
Let $0< \de <\rho$, and set
\begin{equation}\label{L number def}
L= \frac{|S| 2^n}{\omega_n  \de^{n}}
\end{equation}
where $\omega_n$ is volume of  the unit ball in $\RR^n$.  
Let $p,q$ be in a connected component of $S_m^\de$. Then there exists a piecewise geodesic path $\ga:[0,1] \to S_m^{\de/2}$ satisfying $\ga(0)=p$ and $\ga(1)=q$ and with length bounded by $L$. Moreover, $\ga$ can be built by joining $\mathcal{N}$ minimal geodesics of length $\de$, with
\begin{equation}\label{q leq L}
\mathcal{N} \leq L,
\end{equation}
and one minimal geodesic of length less or equal than $\de$.
\end{lemma}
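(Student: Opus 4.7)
The strategy is (i) to produce a continuous connector of $p$ and $q$ inside $S_m^\delta$ from connectedness, (ii) to discretize it into a chain of points $p_0,p_1,\ldots$ spaced at geodesic distance $\delta$ and replace each subarc by a minimizing geodesic, and (iii) to bound the number of geodesic pieces by a volume--packing argument on $S$, in the spirit of the analogous planar estimate in \cite{ABR}.

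Concretely, since $d_S(\,\cdot\,,\partial S_m)$ is continuous, $S_m^\delta$ is open in $S_m$; open subsets of a manifold are locally path-connected, so the connected component of $p$ containing $q$ is path-connected and yields a continuous curve $\sigma:[0,1]\to S_m^\delta$ with $\sigma(0)=p$ and $\sigma(1)=q$. Inductively I set $t_0=0$, $p_0=p$, and let $t_{i+1}$ be the smallest $t>t_i$ with $d_S(\sigma(t),p_i)=\delta$; if no such $t\in[t_i,1]$ exists I stop and set $p_{\mathcal{N}+1}=q$. Then $d_S(p_i,p_{i+1})=\delta$ for $i<\mathcal{N}$ and $d_S(p_\mathcal{N},q)\le\delta$. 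Because $\delta<\rho$, Lemma \ref{lemma bounds rho} gives a unique minimizing geodesic $\gamma_i$ joining each consecutive pair $p_i,p_{i+1}$; I define $\gamma$ as their concatenation, so that the total length is $\mathcal{N}\delta+d_S(p_\mathcal{N},q)\le(\mathcal{N}+1)\delta$.

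For the inclusion $\gamma([0,1])\subset S_m^{\delta/2}$, any $x\in\gamma_i$ satisfies $\min(d_S(x,p_i),d_S(x,p_{i+1}))\le\delta/2$, and since both endpoints lie in $S_m^\delta$ the triangle inequality gives $d_S(x,\partial S_m)>\delta/2$. The main obstacle is the bound $\mathcal{N}\le L$: I expect to arrange, by refining the greedy rule (at each stage discarding parameters $t$ for which $\sigma(t)$ re-enters a previously chosen ball $B_S(p_k,\delta)$, and in the final step joining $p_\mathcal{N}$ to $q$ directly), that the chosen points $p_i$ are pairwise at geodesic distance $\ge\delta$. Once $\delta$-separation is secured, the graph neighborhoods $\mathcal{U}_{\delta/2}(p_i)$ of Lemma \ref{lemma bounds rho} are pairwise disjoint open subsets of $S$, and each has area no less than that of its planar projection $B_{\delta/2}\cap T_{p_i}S$, namely $\omega_n(\delta/2)^n$; summing yields $(\mathcal{N}+1)\omega_n(\delta/2)^n\le|S|$, hence $\mathcal{N}\le L$. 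The delicate point is checking that the modified greedy procedure still terminates within $\delta$ of $q$ and that $\delta$-separation of the centers is enough to make the graph neighborhoods disjoint, both of which I expect to reduce to the touching-ball geometry encoded by Lemma \ref{lemma bound on d Ga}.
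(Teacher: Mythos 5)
Your outline follows the same overall strategy as the paper (discretize a connecting path, join selected points by minimal geodesics, count the pieces by volume packing), but the crux is exactly the step you leave as an expectation, and the rule you sketch does not deliver it. You need the selected points to satisfy two competing requirements simultaneously: pairwise intrinsic separation of order $\delta$ (for the packing count) and \emph{consecutive} distance $\le\delta$ (so that each geodesic piece has length $\le\delta$, stays in $S_m^{\delta/2}$ by your triangle-inequality argument, and the total length is $(\mathcal N+1)\delta$). Discarding the parameters at which $\sigma(t)$ re-enters an earlier ball $B_S(p_k,\delta)$ breaks the second requirement: when the curve finally exits the union of old balls, the resumption point is at distance $\delta$ from \emph{some} earlier $p_k$, but its distance to the last retained point $p_i$ is uncontrolled, so either the piece from $p_i$ to $p_{i+1}$ may be much longer than $\delta$ (ruining both the containment in $S_m^{\delta/2}$ and the length count), or you must route the path back through $p_k$, and then the number of geodesic pieces is no longer the number of selected points. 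The paper resolves this tension differently: it chooses stopping times $t_i$ along $\tilde\gamma$ (formula \eqref{t i+1 def}) so that the \emph{intrinsic} open disks $D_{\delta/2}(\tilde\gamma(t_i))$ are pairwise disjoint, which gives the packing bound at once; it then observes that each new closed disk touches the union of the previously chosen closed disks, and extracts a subsequence (the jump map $\sigma$) along which consecutive closed disks intersect, so consecutive centers are at distance $\le\delta$, each minimal geodesic between them lies in the union of two closed $\delta/2$-disks, hence in $S_m^{\delta/2}$, and the number of pieces is at most the number of disjoint disks. You would need to supply an argument of this kind, or prove that your modified greedy rule really produces a sequence that is both $\delta$-separated and consecutively $\delta$-close.

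There is a second, smaller error: even granting pairwise separation $d_S(p_i,p_j)\ge\delta$, the graph neighborhoods $\mathcal U_{\delta/2}(p_i)$ need not be pairwise disjoint. By Lemma \ref{lemma bound on d Ga}, a point of $\mathcal U_{\delta/2}(p_i)$ can have intrinsic distance from $p_i$ as large as $\rho\arcsin\frac{\delta}{2\rho}>\frac{\delta}{2}$, and $2\rho\arcsin\frac{\delta}{2\rho}>\delta$, so two such neighborhoods around centers at intrinsic distance exactly $\delta$ may overlap; nor does intrinsic separation give any lower bound on the Euclidean distance $|p_i-p_j|$ that could save the argument. The fix is the one the paper uses: work with the intrinsic disks $D_{\delta/2}(p_i)$, which are pairwise disjoint by the triangle inequality once the centers are $\delta$-separated, and bound their area from below via Lemma \ref{lemma bound on d Ga}, since $D_{\delta/2}(p_i)$ contains the graph over $B_{\rho\sin(\delta/(2\rho))}\cap T_{p_i}S$ and the graph area dominates the area of its base. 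The remaining ingredients of your plan (path-connectedness of the component of $S_m^\delta$, the length bound $(\mathcal N+1)\delta$, and the argument that pieces of length $\le\delta$ with endpoints in $S_m^\delta$ stay in $S_m^{\delta/2}$) are correct and coincide with the paper's.
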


\begin{proof}
Let $p,q$ be in a connected component of $S_m^\de$.
We can join $p$ and $q$ by a path $\tilde{\ga}:[0,1] \to S_m^\de$ such that $\tilde{\ga}(0)=p$ and $\tilde{\ga}(1)=q$. Given a point $z_0 \in S$, we denote by $D_r(z_0)$ the set of points on $S$ with intrinsic distance from $z_0$ less than $r$, i.e.
\begin{equation*}
D_r(z_0)= \{z \in S:\ d_{S}(z,z_0) < r \}\,.
\end{equation*}
When $r<\rho$, \eqref{bound on d Ga above} implies
\begin{equation}\label{D r  geq}
|D_r(p)| \geq \om_n r^n.
\end{equation}

Then we consider the increasing sequence $\{t_0, t_1, \ldots, t_I\}$ in $[0,1]$ recursively defined as follows:
 $t_0=0$, and 
\begin{equation} \label{t i+1 def}
t_{i+1} =\inf\,\left\{t\in[0,1]\,\,:\,\, D_{\de/2}(\tilde{\ga}(s)) \cap \bigcup_{j=0}^i D_{\de/2}(\tilde{\ga}(t_j)) = \emptyset \, , \  \forall s \in [t,1]\right\}
\end{equation}
if
$$
\left\{t\in[0,1]\,\,:\,\, D_{\de/2}(\tilde{\ga}(s)) \cap \bigcup_{j=0}^i D_{\de/2}(\tilde{\ga}(t_j)) = \emptyset \ \forall s \in [t,1]\right\}\mbox{ is non-empty},
$$
and $t_{i+1}=t_I$, otherwise.
Therefore $\{t_0, t_1, \ldots, t_I\}$ is an increasing sequence in $[0,1]$ satisfying
\begin{equation} \label{D de i j disjoint}
D_{\de/2}(\tilde{\ga}(t_i)) \cap D_{\de/2}(\tilde{\ga}(t_j)) = \emptyset, \quad \textmd{for } i\neq j,\ i,j=0,\ldots,I,
\end{equation}
and
\begin{equation*}
D_{\de/2}(\tilde{\ga}(t_i)) \subset S_m^{\de/2}, \ \ i=0,\ldots,I.
\end{equation*}
We complete the sequence by adding $t_{I+1}=1$ as last term.
Since
\begin{equation*}
\Big{|} \bigcup_{i=0}^I D_{\de/2}(\tilde{\ga}(t_i)) \Big{|} \leq |S|,
\end{equation*}
from \eqref{D r  geq} and \eqref{D de i j disjoint} we obtain that
\begin{equation}\label{I leq}
I+1 \leq \frac{2^n}{\om_n \de^n} |S|.
\end{equation}
From \eqref{t i+1 def}, it is clear that
\begin{equation*}
\overline{D}_{\de/2}(\tilde{\ga}(t_i)) \cap \left( \bigcup_{j=0}^{i-1} \overline{D}_{\de/2}(\tilde{\ga}(t_j)) \right) \neq \emptyset,
\end{equation*}
for every $i=1,\ldots,I$. Let
\begin{equation*}
\sigma (i) = \max \{ j>i: \overline{D}_{\de/2}(\tilde{\ga}(t_i)) \cap  \overline{D}_{\de/2}(\tilde{\ga}(t_j)) \neq \emptyset \}\,.
\end{equation*}
Then we set $\sigma^2(i)=\sigma(\sigma(i))$, $\sigma^3(i)=\sigma(\sigma(\sigma(i)))$ and so on, and fix
 $\tau\in \NN$ such that
$\sigma^{\tau}(0)=I$.
We define $\ga_1$ as a minimal geodesic joining $p$ and $\tilde{\ga}(t_{\sigma(0)})$ such that 
\begin{equation*}
\ga_1 \subset \overline{D}_{\de/2}(p) \cup \overline{D}_{\de/2}(\tilde{\ga}(t_{\sigma(0)}));
\end{equation*}
for $i=2,\ldots,\tau$, we let $\ga_i$ be a minimal geodesic joining $\tilde{\ga}(t_{\sigma^i(0)})$ and $\tilde{\ga}(t_{\sigma^{i+1}(0)})$ and such that
\begin{equation*}
\ga_i \subset \overline{D}_{\de/2}(\tilde{\ga}(t_{\sigma^i(0)})) \cup \overline{D}_{\de/2}(\tilde{\ga}(t_{\sigma^{i+1}(0)})).
\end{equation*}
Moreover, we let $\ga_{\tau+1}$ be a minimal geodesic joining $\tilde{\ga}(t_{I})$ and $q$ and such that
\begin{equation*}
\ga_i \subset \overline{D}_{\de/2}(\tilde{\ga}(t_{\sigma^i(0)})) \cup \overline{D}_{\de/2}(q).
\end{equation*}
Let $\ga$ be the piecewise geodesic obtained by the union of $\ga_1,\ldots,\ga_{\tau+1}$. It is clear that each $\ga_i$ has length equal to $\de$ for $i=1,\ldots,\tau$, and less or equal than $\de$ for $i=\tau+1$. Since $\tau\leq I$, from \eqref{I leq} we obtain that
\begin{equation*}
length(\ga) \leq (\tau+1) \de \leq \frac{2^n}{\om_n \de^n} |S|,
\end{equation*}
which implies \eqref{L number def} and \eqref{q leq L}, and the proof is complete.
\end{proof}

%
It will be useful to define the following two numbers:
\begin{equation}\label{epsilon 0}
\ep_0=\min \left(\frac 1 2 , \frac{\rho}{16L} \sin \frac{\de}{2\rho} \right),
\end{equation}
and
\begin{equation}\label{n 0}
N_0= 1 + \left[ \log_{(1-\ep_0)} \frac{1}{2} \right] ,
\end{equation}
where $L$ is given by \eqref{L number def} and $[\cdot]$ is the integer part function. We have the following lemma.

\begin{lemma} \label{lemma harnack chain}
Let $\de \in (0,\rho)$, $\ep \in (0,\ep_0)$, with $\ep_0$ given by \eqref{epsilon 0}, and set
\begin{equation}\label{r_i}
r_i=(1-\ep)^i \rho \sin\frac{\de}{2\rho},
\end{equation}
for $i \in \NN$. Let $p$ and $q$ be any two points in a connected component of $S_m^\de$ . Then
there exist an integer $N\leq N_0$, with $N_0$ given by \eqref{n 0}, and a sequence of points  $\{p_1,\dots, p_N\}$ in  $S_m^{\de/2}$ 
such that
%
\begin{eqnarray}
&& p,q \in \bigcup_{i=0}^n \overline{\mathcal U}_{r_i/4}(p_i);,\\
&& \mathcal U_{r_0}(p_i) \subset S_m, \quad i=0,\ldots,N, \\
\label{Vr0 subset}&& p_{i+1} \in\overline{\mathcal U_{r_i/4}(p_i)}, \quad i=0,\ldots,N-1,
\end{eqnarray}
where $\mathcal{U}_{r_i}(p_i)$ are defined as in \eqref{urp}.
\end{lemma}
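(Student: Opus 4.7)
The plan is to discretize a piecewise geodesic path joining $p$ and $q$ inside $S_m^{\delta/2}$. By Lemma~\ref{lemma piecewise geodesic}, applied to the connected component of $S_m^{\delta}$ containing $p$ and $q$, there is a piecewise geodesic $\gamma\colon[0,T]\to S_m^{\delta/2}$ parametrized by arclength, with $\gamma(0)=p$, $\gamma(T)=q$ and $T\le L$. The idea is to walk along $\gamma$ and drop the points $p_0,\dots,p_N$ on it so that the $i$-th step has intrinsic length at most $r_i/4$; then Lemma~\ref{lemma bound on d Ga} will convert this intrinsic bound into the projection bound built into the definition of $\mathcal{U}_{r_i/4}(p_i)$.

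Explicitly, I would set $s_0=0$, $s_{i+1}=\min(s_i+r_i/4,\,T)$, $p_i=\gamma(s_i)$, and take $N$ to be the first index with $s_N=T$. The first bullet is then immediate, since $p=p_0$ and $q=p_N$. For the second bullet, each $p_i$ lies on $\gamma([0,T])\subset S_m^{\delta/2}$, so Lemma~\ref{lemma Ga t}(i) applied with $\delta/2$ in place of $\delta$ (note $\rho\sin(\delta/(2\rho))=r_0$) gives $\mathcal{U}_{r_0}(p_i)\subset S_m$; in particular the graph parametrization of Lemma~\ref{lemma bounds rho} around $p_i$ is well defined on $B_{r_0}\cap T_{p_i}S$, hence on $B_{r_i/4}\cap T_{p_i}S$. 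For the third bullet, arclength gives $d_S(p_i,p_{i+1})\le s_{i+1}-s_i\le r_i/4$, and Lemma~\ref{lemma bound on d Ga} then forces the orthogonal projection $x$ of $p_{i+1}$ onto $T_{p_i}S$ to satisfy $|x|\le d_S(p_i,p_{i+1})\le r_i/4$, so that $p_{i+1}\in\overline{\mathcal{U}}_{r_i/4}(p_i)$.

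The remaining point, which I expect to be the main technical obstacle, is the bound $N\le N_0$. The minimality of $N$ forces $s_{N-1}<T\le L$, and summing the geometric series rewrites this as
\[
(1-\varepsilon)^{N-1}\;>\;1-\frac{4L\varepsilon}{r_0}.
\]
The choice of $\varepsilon_0$ in \eqref{epsilon 0} guarantees $4L\varepsilon/r_0\le 1/4$, so both sides are positive; taking logarithms yields
\[
N-1\;<\;\frac{\log(1-4L\varepsilon/r_0)}{\log(1-\varepsilon)}.
\]
I then have to show that the right-hand side is majorized, uniformly for $\varepsilon\in(0,\varepsilon_0]$, by $\log(1/2)/\log(1-\varepsilon_0)=\log_{1-\varepsilon_0}(1/2)$. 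My plan is a short monotonicity-in-$\varepsilon$ argument based on the elementary bounds $x\le -\log(1-x)\le x/(1-x)$ for $x\in[0,1/2]$: the factor $1/16$ in the definition of $\varepsilon_0$ is tailored precisely so that this ratio is controlled both in the limit $\varepsilon\to 0^+$ (where it is of order $4L/r_0$) and at $\varepsilon=\varepsilon_0$ (where the numerator is at most $\log(4/3)$). Once this estimate is established, the integrality of $N-1$ gives $N-1\le[\log_{1-\varepsilon_0}(1/2)]=N_0-1$, and therefore $N\le N_0$.
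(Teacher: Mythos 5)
Your proposal is correct and follows essentially the same route as the paper: there too one discretizes the piecewise geodesic of Lemma \ref{lemma piecewise geodesic} into steps of length $r_i/4$ and deduces the three listed properties from Lemma \ref{lemma Ga t} and the projection bound \eqref{bound on d Ga above}. The only difference is the bookkeeping for $N\le N_0$: instead of your log-ratio majorization (which does hold, and can be completed with exactly the bounds $x\le-\log(1-x)\le x/(1-x)$ you cite), the paper argues more directly that $\sum_{i=0}^{N_0-1} r_i/4 > 2L \ge 2\,\mathrm{length}(\gamma)$, using that the geometric sum $\sum_{i=0}^{N_0-1}(1-\ep)^i$ is decreasing in $\ep$ together with $(1-\ep_0)^{N_0}\le 1/2$ and $\ep_0\le \rho\sin(\de/(2\rho))/(16L)$, which bypasses the logarithmic comparison you left as a plan.
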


\begin{proof}
Let $\ga$ be a path as in Lemma \ref{lemma piecewise geodesic} and denote by $s$ its arclength.
Set $p_0=p$ and define $p_i = \ga (r_i/4)$, for each $i=1,\ldots,N-1$, and $p_N=q$. Here, $N$ is the largest integer such that
\begin{equation*}
\sum_{i=0}^{N-1} \frac{r_i}{4} \leq L.
\end{equation*}
Since $\ep < \ep_0$, we have
\begin{equation*}
\sum_{i=0}^{N_0-1} \frac{r_i}{4} > 2L,
\end{equation*}
and hence such $N$ exists and we can assume that $N\leq N_0$, where $N_0$ is defined by \eqref{n 0}.
Since $\ga \subset S_m^{\de/2}$, the assertion of the theorem easily follows from \eqref{bound on d Ga above}.
\end{proof}

For a fixed direction $\ell \in \Sbb^n$, we denote by $\ell^\perp$ the orthogonal subspace to $\ell$, i.e.
\begin{equation*}
\ell^\perp = \{z \in \RR^{n+1}:\ z \cdot \om  = 0\}.
\end{equation*}

\begin{lemma} \label{lemma change normal}
Let $p \in S$ and $u\colon B_r\cap T_pS\to \mathbb R$ be a $C^2$ map as in \eqref{urp}, with $r <  \rho$.
Let $\ell \in \Sbb^n$ be such that
\begin{equation}\label{omega assumptions}
\nu_p\cdot \ell >0 \quad \textmd{and }\ \ \ |\ell - \nu_p|<\ep,
\end{equation}
for some $0\leq \ep < 1$.
%
%
There exists a $C^2$ function $v: B_{r\sqrt{1-\ep^2}}\cap \ell^{\perp} \to \RR$ such that the set
\begin{equation}\label{V change normals}
V = \{p+ y + v(y) \ell : \ y \in  B_{r\sqrt{1-\ep^2}}\cap \ell^\perp\}
\end{equation}
is contained in $\mathcal U_{r}(p)$. 
Moreover, the estimate
\begin{equation}\label{u estimates v}
\| v\|_{\infty} \leq \| u\|_{\infty} + \sqrt 2 \ep r
\end{equation}
holds.
\end{lemma}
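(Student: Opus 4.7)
The plan is to reparametrize $\mathcal{U}_r(p)$ as a graph over $\ell^\perp$ instead of $T_pS$. First, decompose $\ell = \tilde\ell + \lambda\nu_p$ with $\tilde\ell = \ell-(\ell\cdot\nu_p)\nu_p\in T_pS$ and $\lambda = \ell\cdot\nu_p$. The hypothesis $|\ell-\nu_p|^2 = 2(1-\lambda) < \epsilon^2$ yields $\lambda > 1-\epsilon^2/2 \geq \sqrt{1-\epsilon^2}$, and setting $\mu := |\tilde\ell|$ the identity $\lambda^2+\mu^2 = 1$ gives $\mu<\epsilon$. Pick an orthonormal basis $\{e_1,\ldots,e_{n-1}\}$ of $T_pS\cap\ell^\perp$, set $e_n := \tilde\ell/\mu$ (the case $\mu = 0$ forces $\ell=\nu_p$ and one takes $v=u$), and complete $\{e_1,\ldots,e_{n-1}\}$ to an orthonormal basis of $\ell^\perp$ via the unit vector $f_n := \lambda e_n - \mu\nu_p$ (one checks $|f_n|=1$ and $f_n\cdot\ell=0$). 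Writing $q(x) := p + x + u(x)\nu_p$ with $x = \sum_i x_i e_i \in B_r\cap T_pS$, a direct computation yields
\[
\Psi(x) := q(x) - p - \bigl[(q(x)-p)\cdot\ell\bigr]\ell = \sum_{i<n} x_i e_i + \bigl(\lambda x_n - \mu u(x)\bigr) f_n,\qquad (q(x)-p)\cdot\ell = \mu x_n + \lambda u(x).
\]
It therefore suffices to invert $\Psi$ on $B_{r\sqrt{1-\epsilon^2}}\cap\ell^\perp$ and to set $v(y) := \mu x_n(y) + \lambda u(x(y))$.

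For the inversion, fix $y$ with coordinates $(y_1,\ldots,y_n)$ in $\{e_1,\ldots,e_{n-1},f_n\}$ satisfying $|y|^2 < r^2(1-\epsilon^2)$ and put $x_i = y_i$ for $i<n$, leaving the scalar equation $F(x_n) := \lambda x_n - \mu u(x_{<n},x_n) = y_n$. By Lemma~\ref{lemma bounds rho}, $|\nabla u(x)|\leq |x|/\sqrt{\rho^2-|x|^2}$, so $F'(x_n) = \lambda - \mu\,\pa_{e_n}u > 0$ on the relevant range and $F$ is strictly monotone. In the linear regime $u\equiv 0$ one reads off $x_n = y_n/\lambda$ with $|x|^2 = |y_{<n}|^2 + y_n^2/\lambda^2 \leq |y|^2/\lambda^2 \leq r^2(1-\epsilon^2)/\lambda^2 \leq r^2$, so the preimage lies in $B_r$; the nonlinear correction is controlled by the quadratic decay $|u(x)|\leq |x|^2/\rho$ coming from \eqref{bound on u}, so the inclusion $|x|<r$ persists. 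The inverse function theorem then produces a $C^2$ map $y\mapsto x(y)$, and hence the desired $C^2$ function $v$.

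For the sup bound, write $v(y) = (q(x)-p)\cdot\ell = (q(x)-p)\cdot\nu_p + (q(x)-p)\cdot(\ell-\nu_p) = u(x) + (q(x)-p)\cdot(\ell-\nu_p)$. Using \eqref{bound on u} together with $\rho-\sqrt{\rho^2-r^2}\leq r$ (valid since $r<\rho$) yields $\|u\|_\infty\leq r$, hence $|q(x)-p|^2 = |x|^2 + u(x)^2\leq 2r^2$; combined with $|\ell-\nu_p|<\epsilon$ this gives $|v(y)|\leq \|u\|_\infty + \sqrt{2}\,\epsilon\,r$. The main obstacle is the invertibility step: confirming that the preimage of $B_{r\sqrt{1-\epsilon^2}}$ under $\Psi$ remains inside $B_r\cap T_pS$ once the nonlinear term $\mu u(x)$ is switched on. The key ingredient is again Lemma~\ref{lemma bounds rho}, which supplies both the gradient bound needed for monotonicity of $F$ and the quadratic decay of $u$ near the origin that keeps the perturbation from the linear model small.
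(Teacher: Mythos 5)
Your setup is correct and in fact more explicit than the paper's: the projection formula $\Psi(x)=\sum_{i<n}x_ie_i+\bigl(\lambda x_n-\mu u(x)\bigr)f_n$ with $(q(x)-p)\cdot\ell=\mu x_n+\lambda u(x)$ is right, and your derivation of \eqref{u estimates v} from $v(y)=u(x)+(q(x)-p)\cdot(\ell-\nu_p)$, $\|u\|_\infty\le r$, $|q(x)-p|\le\sqrt2\,r$ is clean (the paper instead shows $\nu_q\cdot\ell>0$ for $|x|<r\sqrt{1-\ep^2}$, invokes the implicit function theorem, and estimates $\|v\|_\infty$ through a rotation $A$ with $A\nu_p=\ell$, $|I-A|\le\sqrt2\,\ep$). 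The genuine gap is exactly the step you flag and then wave through: ``the inclusion $|x|<r$ persists.'' This cannot be established from \eqref{bound on u}--\eqref{bound on nabla u}, because it fails for an admissible extremal configuration. Take $u(x)=\rho-\sqrt{\rho^2-|x|^2}$ (i.e.\ $S$ a sphere of radius $\rho$, which saturates \eqref{bound on u}), $r=\rho/2$, $\ep<r/(4\rho)$, and $|\ell-\nu_p|=\ep/\sqrt2$, so that $\mu\ge\ep/2$ and $\lambda\rho>r$. For $y=y_nf_n$, the $f_n$-coordinate of any point of $\mathcal U_r(p)$ is $\lambda x_n-\mu u(x)\le g(x_n):=\lambda x_n-\mu\bigl(\rho-\sqrt{\rho^2-x_n^2}\bigr)\le g(r)$, and since $\lambda\le1$, $\rho-\sqrt{\rho^2-r^2}\ge r^2/(2\rho)$ and $r-r\sqrt{1-\ep^2}\le r\ep^2$, one gets $g(r)\le r-\ep r^2/(4\rho)<r\sqrt{1-\ep^2}$. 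So no point of $\mathcal U_r(p)$ projects onto $y$ when $y_n$ is close to $r\sqrt{1-\ep^2}$: the available margin $r(\lambda-\sqrt{1-\ep^2})=O(\ep^2 r)$ cannot absorb the shift $\mu\|u\|_\infty=O(\ep r^2/\rho)$ produced by the nonlinear term, and the quadratic decay $|u(x)|\le|x|^2/\rho$ only helps for $|y|$ well inside the disk, not near its edge. (This same boundary-coverage issue is hidden in the paper's own argument by the identification of $y$ with $Ax$, which is not the orthogonal projection onto $\ell^\perp$.)

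Consequently the inversion step requires weakening the conclusion rather than sharpening your estimate: either shrink the target ball, e.g.\ to radius $r\bigl(\sqrt{1-\ep^2}-\sqrt2\,\ep\bigr)$ or $r\sqrt{1-\ep^2}-C\ep r^2/\rho$, so that the intermediate value argument for $F(x_n)=\lambda x_n-\mu u(x_{<n},x_n)=y_n$ closes inside $B_r$; or keep the radius but use Lemma \ref{lemma bounds rho} to extend $u$ to $B_\rho\cap T_pS$ and conclude only $V\subset\mathcal U_{r'}(p)$ for some $r<r'\le\rho$, under a smallness relation between $\ep$ and $r/\rho$. Either variant is all that the applications in Section \ref{section proof main} need, since there $\ep$ is of order $\oscH$ and the radius actually used is much smaller than $\rho$. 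A secondary point: your monotonicity claim $F'(x_n)=\lambda-\mu\,\pa_{e_n}u>0$ follows from \eqref{bound on nabla u} only where $|x|<\lambda\rho$ (it is equivalent to $\nu_q\cdot\ell>0$), hence it is not automatic on all of $B_r$ under the bare hypotheses $r<\rho$, $0\le\ep<1$; this becomes harmless once the radius is reduced as above, but it must be stated, since as written ``the relevant range'' is unjustified.
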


\begin{proof} 
Let $q=p+x+u(x)\nu_p $ be a point in $\mathcal U_{r}(p)$, with
\begin{equation}\label{q leq}
|x| < r \sqrt{1-\ep^2}.
\end{equation}
By the implicit function theorem, if  $\nu_q\cdot \ell >0$, then $S$ can be locally represented as a graph  of function near $q$ over the hyperplane $\ell^\perp$. Let $A \in {\rm SO}(n+1)$ be a special orthogonal matrix such that
\begin{equation*}
A \,\nu_p=\ell,
\end{equation*}
and let $y \in \ell^\perp$ be such that
\begin{equation*}
y=Ax.
\end{equation*}
Since $A \in {\rm SO}(n+1)$ we have
$|x|= |y|$
and then
$$
|y| < r \sqrt{1-\ep^2}\,.
$$
From triangle and Cauchy-Schwarz inequalities we have that
\begin{equation*}
\nu_q \cdot \ell \geq \nu_q\cdot \nu_p - |\ell -\nu_p|\,;
\end{equation*}
\eqref{bound on nu N+1} and \eqref{omega assumptions} yield that
\begin{equation*}
\nu_q \cdot \ell \geq \sqrt{1-\frac{|x|^2}{\rho^2}} - \ep,
\end{equation*}
which implies that $\nu_q \cdot \ell>0$ on account of \eqref{q leq}. Therefore any point $q \in V$ can be written both as $q=p+x+u(x)\nu_p$ and as $q=p+ y + v(y) \ell$ for some $x\in T_pS$ and $y\in \ell^{\perp}$. In particular
\begin{equation*}
y + v(y) \ell =x+u(x)\nu_p
\end{equation*}
and, since $y=Ax$, we have
\begin{equation*}
(I-A)x + u(x) \nu_p = v(y)\ell.
\end{equation*}
By taking the scalar product with $\ell$, we readily obtain
\begin{equation}\label{v leq I-A}
|v(\xi)| \leq |I-A| |x| + |u(x)|.
\end{equation}
The matrix $A$ can be choosen such that
\begin{equation*}
| I-A| \leq 2 \sqrt{1-\ell \cdot \nu_p} \leq \sqrt{2} \ep,
\end{equation*}
and \eqref{v leq I-A} implies the last part of the statement.
\end{proof}

It will be important to compare the normal vectors to two surfaces which are graphs of function over the same domain. We have the following Lemma.

\begin{lemma} \label{lemma diff normali}
Let $u_1,u_2\in C^{1}(B_r \cap e_{n+1}^\perp)$ and assume that
\begin{equation*}
|\nabla u_2(x_0) -  \nabla u_1(x_0)| < \ep,
\end{equation*}
for some $x_0 \in B_r \cap e_{n+1}^\perp$. Let $p_i = x_0 + u_i(x_0) e_{n+1}$, $i=1,2$. Then 
\begin{equation}\label{diff normali}
|\nu_{p_1} - \nu_{p_2}| \leq\frac{\sqrt{5}}{2} \ep,
\end{equation}
where
\begin{equation*}
\nu_{p_i}= \frac{-\nabla u_i(x_0) + e_{n+1}}{\sqrt{1+|\nabla u_i(x_0)|^2}},
\end{equation*}
is the inward normal to the graph of $u_i$ at $p_i$, $i=1,2$.
\end{lemma}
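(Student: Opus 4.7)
The normals satisfy $\nu_{p_i} = F(\nabla u_i(x_0))$, where
\begin{equation*}
F\colon e_{n+1}^\perp \to \Sbb^n, \qquad F(v) := \frac{-v + e_{n+1}}{\sqrt{1+|v|^2}}.
\end{equation*}
My plan is thus to establish the Lipschitz estimate $|F(a)-F(b)|\leq |a-b|$ on all of $e_{n+1}^\perp$; applied to $a=\nabla u_1(x_0)$ and $b=\nabla u_2(x_0)$, combined with the hypothesis $|a-b|<\ep$, this immediately yields $|\nu_{p_1}-\nu_{p_2}|<\ep\leq\tfrac{\sqrt{5}}{2}\ep$, which is in fact stronger than the stated bound.

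Setting $\alpha:=\sqrt{1+|a|^2}$, $\beta:=\sqrt{1+|b|^2}$, and using $|F(a)|=|F(b)|=1$ together with $a\cdot e_{n+1}=b\cdot e_{n+1}=0$, I would start from
\begin{equation*}
|F(a)-F(b)|^2 = 2\bigl(1-F(a)\cdot F(b)\bigr) = \frac{2(\alpha\beta - 1 - a\cdot b)}{\alpha\beta}.
\end{equation*}
Rationalizing (multiplying numerator and denominator by $\alpha\beta + 1 + a\cdot b$) and using the algebraic identity
\begin{equation*}
(\alpha\beta)^2 - (1+a\cdot b)^2 = |a-b|^2 + \bigl(|a|^2|b|^2 - (a\cdot b)^2\bigr),
\end{equation*}
I would rewrite
\begin{equation*}
|F(a)-F(b)|^2 = \frac{2\bigl(|a-b|^2 + |a|^2|b|^2 - (a\cdot b)^2\bigr)}{\alpha\beta\,(\alpha\beta + 1 + a\cdot b)}.
\end{equation*}

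The final step is to estimate numerator and denominator. The Lagrange identity together with $a\wedge b = a\wedge(b-a)$ gives $|a|^2|b|^2-(a\cdot b)^2 \leq \min(|a|,|b|)^2\,|a-b|^2$, while $(|a|-|b|)^2\geq 0$ yields $\alpha\beta\geq 1+|a||b|$ and hence $\alpha\beta+1+a\cdot b\geq 2$. Assuming without loss of generality $|a|\leq |b|$, so that $\alpha\leq\beta$, these estimates collapse to
\begin{equation*}
|F(a)-F(b)|^2 \leq \frac{|a-b|^2(1+|a|^2)}{\alpha\beta} = \frac{\alpha}{\beta}\,|a-b|^2 \leq |a-b|^2,
\end{equation*}
completing the argument. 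The main subtlety is to obtain a constant independent of $|\nabla u_i(x_0)|$: naive triangle-inequality decompositions such as $\nu_{p_1}-\nu_{p_2} = (b-a)/\alpha + (e_{n+1}-b)(1/\alpha - 1/\beta)$ produce a spurious factor of $2$, and it is the Lagrange-type bound on the numerator combined with the lower bound $\alpha\beta+1+a\cdot b\geq 2$ that drives the cancellation. An alternative route would be to differentiate $F$, verify $\|DF(v)\|\leq 1/\sqrt{1+|v|^2}\leq 1$ by a direct computation, and apply the mean value inequality; both routes arrive at the same conclusion and trivially imply the factor $\sqrt{5}/2$ in the statement.
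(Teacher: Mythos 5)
Your proof is correct, and it takes a genuinely different route from the paper's. The paper splits $\nu_{p_1}-\nu_{p_2}$ into its component in $e_{n+1}^\perp$ and its component along $e_{n+1}$, and bounds them separately: the map $v\mapsto v/\sqrt{1+|v|^2}$ is $1$-Lipschitz (being the gradient of $v\mapsto\sqrt{1+|v|^2}$, whose Hessian has eigenvalues at most $1$), while $t\mapsto 1/\sqrt{1+t^2}$ is $\tfrac12$-Lipschitz; combining the two orthogonal contributions gives exactly the constant $\sqrt{1+\tfrac14}=\tfrac{\sqrt5}{2}$ in \eqref{diff normali}. You instead treat the full Gauss map $F(v)=(-v+e_{n+1})/\sqrt{1+|v|^2}$ at once and show it is $1$-Lipschitz on $e_{n+1}^\perp$, via the identity $|F(a)-F(b)|^2=2(\alpha\beta-1-a\cdot b)/(\alpha\beta)$, the algebraic expansion $(\alpha\beta)^2-(1+a\cdot b)^2=|a-b|^2+|a|^2|b|^2-(a\cdot b)^2$, and the Lagrange-identity bound $|a|^2|b|^2-(a\cdot b)^2\le\min(|a|,|b|)^2|a-b|^2$ (your alternative via $\|DF(v)\|\le(1+|v|^2)^{-1/2}\le 1$ and the mean value inequality on the convex domain $e_{n+1}^\perp$ also checks out). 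I verified the key steps: $\alpha\beta\ge 1+|a||b|$ gives $\alpha\beta+1+a\cdot b\ge 2$, and with $|a|\le|b|$ the numerator bound collapses to $(\alpha/\beta)|a-b|^2\le|a-b|^2$. Your argument therefore yields the sharper constant $1$ in place of $\tfrac{\sqrt5}{2}$, which trivially implies the stated estimate; what the paper's decomposition buys is brevity, since it only invokes two standard one-variable Lipschitz facts, at the price of the slightly larger constant coming from adding the two components in quadrature.
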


\begin{proof}
Since the eigenvalues of the Hessian of the function $x\mapsto \sqrt{1+|x|^2}$ are uniformly bounded by 1, then its gradient is Lipschitz continuous with constant 1 and we have that
\begin{equation} \label{eq lipsch gradient bound}
\Big{|} \frac{\nabla u_1(x)}{\sqrt{1+|\nabla u_1(x)|^2}} - \frac{\nabla u_2(x)}{\sqrt{1+|\nabla u_2(x)|^2}} \Big{|} \leq |\nabla u_1(x) - \nabla u_2(x)|.
\end{equation}
Moreover, we have that
\begin{equation}\label{eq lagrange bound}
\Big{|} \frac{1}{\sqrt{1+|\nabla u_1(x)|^2}} - \frac{1}{\sqrt{1+|\nabla u_2(x)|^2}} \Big{|} \leq \frac{1}{2} \big{|} |\nabla u_1(x)| - |\nabla u_2(x)| \big{|}.
\end{equation}
From triangle inequality and from \eqref{eq lipsch gradient bound} and \eqref{eq lagrange bound} we readily obtain \eqref{diff normali}.
\end{proof}


\section{Proof of Theorem \ref{main}} \label{section proof main}
The proof of Theorem \ref{main} relies upon a quantitative study of the method of moving planes and it consists of several steps, as we sketch in the following.
\begin{itemize}
\item[Step 1.] We fix a direction $\om$, apply the method of moving planes, and find a critical position which defines a critical hyperplane $\pi_m$, as described in Subsection \ref{subsection Alexandrov}. By using the smallness of $\oscH$, we can prove that (up to a connected component) the surface $S$ and the reflected cap $S_m$ are close. Hence, the union of the cap and the reflected cap provides a symmetric set in the direction $\om$ which gives information about the approximate symmetry of $S$ in the direction $\om$. It is important to notice that the estimates do not depend on the chosen direction.
\item[Step 2.] We apply Step 1 in $n+1$ orthogonal directions and we obtain a point $\mathcal{O}$ as the intersection  of the corresponding $n+1$ critical hyperplanes. Since the estimates in Step 1 do not depend on the direction, the point $\mathcal{O}$ can be chosen as an approximate center of symmetry. Moreover, any critical hyperplane in any other direction is far from $\mathcal{O}$ less than some constant times $\oscH$.
\item[Step 3.] Again by using the estimates in Step 1, we can define two balls centered at $\mathcal{O}$ such that estimate \eqref{stability radii} holds. 
\end{itemize}

We notice that once we have the approximate symmetry in one direction, i.e. Step 1, then the argument for proving Steps 2 and 3 is well-established (see \cite[Section 4]{ABR}). In the following we will prove Step 1, which is our main result of this section and, for the sake of completeness, we give a sketch of the proof for Steps 2 and 3.

%
%
%

\subsection{Step 1. Approximate symmetry in one direction}
We apply the moving plane procedure as described in Subsection \ref{subsection Alexandrov}.
Let $\omega\in \Sbb^n$ be a direction in $\RR^{n+1}$ and let
$S_m$,  $\hat S_m$ be defined as in \eqref{definitions}. Let 

\noindent \begin{center} $p_0$ be a tangency point between $S_m$ and $\hat S_m$, \end{center} 

\noindent and denote by $\Sigma$ and $\hat \Sigma$ the connected components of $S_m$ and $\hat S_m$, respectively,  containing $p_0$ or having $p_0$ on their boundary. Let $S^*$ be the reflection of $S$ about $\pi_m$.
For a point $p$ in $S$ (or $S^*$), we denote by $\nu_p$ the normal vector to $S$ (or to $S^*$) at $p$. We will use this notation when it does not create ambiguity: the choice of the vector normal and of the surface is implied by the point itself. When $p \in S \cap S^*$ is a point of tangency between $S$ and $S^*$, then the normal vector at $p$ is the same for both the surfaces, and the notation is coherent. When this notation creates an ambiguity, i.e. for nontangency points in $S \cap S^*$, we will specify the dependency on the surface. For points on $\pa \Sigma$ (or $\pa \hat \Sigma$) we will denote by $\nu$ the Gauss map on $\pa \Sigma$ (or $\pa \hat \Sigma$) which is induced by the one on $S^*$ (or $S$).


The main goal of Step 1 is to prove the following result of approximate symmetry in one direction.

\begin{theorem} \label{thm approx symmetry 1 direction}
There exists a positive constant $\ep$ such that if
$$
{\rm osc}( H) \leq \ep,
$$
then for any $p \in\Sigma$ there exists $\hat p\in \hat\Sigma$ such that
\begin{equation}\label{bound on dist}
|p-\hat p| + |\nu_p-\nu_{\hat p}| \leq C\, \oscH  . 
\end{equation}
Here, the constants $\ep$ and $C$ depend only on $n$, $\rho$, $|S|$ and do not depend on the direction $\om$.
\end{theorem}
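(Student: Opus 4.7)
The plan is to implement a quantitative version of Alexandrov's symmetry argument in direction $\omega$: after running the method of moving planes to the critical position $\pi_m$ with tangency at $p_0$, I would produce a first quantitative $C^1$-closeness estimate between the graph representations of $\Sigma$ and $\hat\Sigma$ near $p_0$, then propagate it to the whole cap via a Harnack chain, and finally convert it into the pointwise closeness of points and normals required by \eqref{bound on dist}. Concretely, near $p_0$ I would write $\Sigma$ and $\hat\Sigma$ locally as graphs $u_1,u_0$ over $B_\rho\cap T_{p_0}S$ (over a half-ball in Case~(ii)) with a common inward normal $\nu_{p_0}$. The difference $w = u_1-u_0$ is non-negative and satisfies a uniformly elliptic linear equation $Lw = n(H_1-H_0)$ with $\|H_1-H_0\|_{C^0}\leq \oscH$, the ellipticity and boundedness of the coefficients being guaranteed by Lemma \ref{lemma bounds rho}. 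In Case~(i), the interior tangency gives $w(0)=0$, so Lemma \ref{lemma Harnack interior} yields $\|w\|_{C^1(B_{\rho/4})}\leq K\,\oscH$ directly. In Case~(ii), additionally $\nabla w(0)=0$ and $w\equiv 0$ on the flat portion of the half-ball boundary, and combining the Carleson estimate of Lemma \ref{lemma Carleson} with the quantitative Hopf estimate of Lemma \ref{lemma hopf} gives the same $C^1$-bound.

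Next I would propagate this closeness across the cap. Fix a small $\delta\in(0,\rho)$ (to be chosen independently of $\oscH$) and, for any $p$ in the connected component of $S_m^\delta$ containing $p_0$, invoke Lemma \ref{lemma harnack chain} to connect $p_0$ to $p$ through a chain of centers $p_0,p_1,\ldots,p_N=p$ with $N\leq N_0$ depending only on $n,\rho,|S|$, and with consecutive balls $\mathcal{U}_{r_i/4}(p_i)$ overlapping inside $\Sigma$. At each $p_i$ I would simultaneously reparametrize $\Sigma$ and $\hat\Sigma$ as graphs $u_1^{(i)},u_0^{(i)}$ over $T_{p_i}\Sigma$; the $C^1$-closeness obtained at step $i-1$ controls $|\nu^\Sigma-\nu^{\hat\Sigma}|$ at the transition point via Lemma \ref{lemma diff normali}, and Lemma \ref{lemma change normal} then gives the new graph representation of $\hat\Sigma$ over $T_{p_i}\Sigma$ with an explicit $C^0$-bound of the form $\|u_1^{(i)}-u_0^{(i)}\|_\infty \leq C(\eta_{i-1}+\oscH)$, where $\eta_{i-1}$ is the $C^1$-bound at the previous step. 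A fresh application of Lemma \ref{lemma Harnack interior} at $p_i$ restores a bound $\eta_i\leq K\eta_{i-1}+K'\oscH$. Since the Harnack constants $K,K'$ depend only on $n,\rho,|S|$ and $N\leq N_0$ is uniformly bounded, iterating gives $\eta_N\leq C\,\oscH$ at every point of the connected component of $S_m^{\delta/2}$ containing $p_0$.

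For the remaining boundary strip $\Sigma\setminus S_m^{\delta/2}$, i.e.\ within intrinsic distance $\delta$ of $\partial\Sigma\subset\pi_m$, I would anchor the graph representation at a boundary point $q\in\partial\Sigma$ over a half-ball whose flat side corresponds to $\partial\Sigma$; this is where Propositions \ref{prop Luigi I}--\ref{prop Luigi II} come in, as they furnish the uniform $C^2$-regularity of the boundary curve and control the principal curvatures of the projected boundary in terms of those of $S$ and of $|\nu_q\cdot\omega|$, which is the required input for the boundary Harnack of Lemma \ref{lemma Carleson}. Lemma \ref{lemma Carleson} then extends the $C^1$-bound on $w$ up to $\partial\Sigma$, with the degenerate configuration $\nu_q\cdot\omega\to 0$ corresponding precisely to Case~(ii), already handled in the first phase. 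Pasting together the three phases, Lemma \ref{lemma diff normali} converts the uniform $C^1$-bound on the graph representations into the pointwise estimate $|p-\hat p|+|\nu_p-\nu_{\hat p}|\leq C\,\oscH$, with $\hat p\in\hat\Sigma$ the counterpart of $p$ via the common graph parametrization and $C$ uniform in $\omega$ since it depends only on $n$, $\rho$, and $|S|$. The main obstacle is the propagation phase: the chain argument requires the graphability of $\hat\Sigma$ over the tangent plane to $\Sigma$ at each new chart, which is accessible only through the previous-step normal closeness (Lemma \ref{lemma change normal}), and care must be taken to keep the Harnack constants from compounding—this is precisely why the uniform bound $N\leq N_0$ on the chain length, provided by Lemma \ref{lemma harnack chain}, is crucial.
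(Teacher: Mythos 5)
Your overall strategy is the same as the paper's: a first quantitative estimate at the tangency point, propagation along the Harnack chain of Lemma \ref{lemma harnack chain} with chart changes controlled by Lemmas \ref{lemma change normal} and \ref{lemma diff normali}, and a Carleson-type extension to the strip near $\pa\Sigma$ using the curvature control of Propositions \ref{prop Luigi I}--\ref{prop Luigi II}. However, there are two genuine gaps. First, your treatment of the tangency point $p_0$ covers only the two extreme situations: interior tangency, where you apply interior Harnack on a ball of fixed radius, and orthogonal tangency $p_0\in\pi_m$, where you work over a half-ball with a flat boundary portion on which $w$ vanishes. The delicate intermediate case is when $p_0$ is an interior tangency point lying at distance strictly between $0$ and $\de$ from $\pa\Sigma$ (the paper's Case 3): there the two graphs with the ordering $u_1\geq u_0$ coexist only on the region of $T_{p_0}\Sigma$ bounded by the \emph{curved} projection $U''$ of $S^*\cap\pi_m$, so interior Harnack on a fixed-size ball is not available, and there is no flat boundary portion either. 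The paper resolves this by bounding the principal curvatures of $U''$ (Lemma \ref{coroll curvatures}, which is where Propositions \ref{prop Luigi I} and \ref{prop Luigi II} really enter) and applying the quantitative Hopf estimate \eqref{Hopf A} of Lemma \ref{lemma hopf} with an interior ball $B_{2\de}(y)$ tangent to $U''$, using only $u(0)=\hat u(0)$; this transfers the smallness to a point at distance $\geq 2\de$ from $\pa\Sigma$, after which the interior and boundary cases apply. As written, your phase 1 silently assumes $d_\Sigma(p_0,\pa\Sigma)$ is bounded below by a fixed fraction of $\rho$.

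Second, your propagation only reaches the connected component of $\Sigma^\de$ containing $p_0$, and your boundary-strip phase anchors at points whose estimate comes from that same component; a priori $\Sigma^\de$ could have several connected components, in which case your three phases would not cover all of $\Sigma$ and \eqref{bound on dist} would remain unproved for the points you cannot reach. The paper closes this hole explicitly: the estimate already established on the component containing $p_0$ forces $\nu_q\cdot\om\leq 1/8$ at its boundary points (Lemma \ref{lemma bound on nu1}, applied with $\al=C_1\oscH$ and $\ep$ small), and then Lemmas \ref{lemma connected} and \ref{lemma connected II} show that $\Sigma^\de$ is in fact connected. Without this step, or some substitute for it, the chain argument plus the Carleson extension does not yield the conclusion for every $p\in\Sigma$, which is what the theorem requires.
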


Before giving the proof of Theorem \ref{thm approx symmetry 1 direction}, we provide two preliminary results
about the geometry of $\Sigma$. For $t> 0$ we set  
$$ 
\Sigma^t = \{p \in \Sigma:\ d_\Sigma(p,\pa \Sigma) > t \}\,.
$$
The following two lemmas show some conditions implying that $\Sigma^t$ is connected for $t$ small enough.

\begin{lemma} \label{lemma connected}
Assume that there exists $\mu\leq \frac12 $ such that 
\begin{equation}\label{trasversale}
\nu_p \cdot \om \leq \mu
\end{equation}
for every $p$ on the boundary of $\Sigma$.
Then $\Sigma^t$ is connected for any $0<t \leq t_0$, where
$$
t_0=\frac{\rho}{2\sqrt{n}}\sqrt{1-2\mu^2}.
$$
\end{lemma}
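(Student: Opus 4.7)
The plan is to show that, for every $0 < t \leq t_0$, the set $N_t := \Sigma \setminus \Sigma^t = \{q \in \Sigma : d_\Sigma(q, \partial \Sigma) \leq t\}$ is a tubular (collar) neighborhood of $\partial \Sigma$ inside $\Sigma$. Since $\Sigma$ is connected and removing a collar of the boundary from a connected manifold does not change the number of connected components, this will yield the connectedness of $\Sigma^t$.

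First, I would use the analogue of Lemma~\ref{lemma Ga t}(ii) for $\Sigma$ to represent any $q \in N_t$ (with $t < \rho$) as $q = p + x + u(x)\nu_p$ for some $p \in \partial \Sigma$ and $|x| \leq t$, with $u$ the graph function from Lemma~\ref{lemma bounds rho}. Combining the hypothesis $\nu_p \cdot \omega \leq \mu$ on $\partial \Sigma$ with the Lipschitz estimate $|\nu_p - \nu_q| \leq \sqrt{2}\, |x|/\rho$ of Lemma~\ref{lemma bounds rho}, I would deduce the propagated transversality
\begin{equation*}
|\nu_q \cdot \omega| \leq \mu + \frac{\sqrt{2}\, t}{\rho}, \qquad q \in N_t.
\end{equation*}
The specific value $t_0 = \rho \sqrt{1 - 2\mu^2}/(2\sqrt{n})$ is calibrated so that, after squaring and using the sharper second-order information (through a Hilbert--Schmidt bound $\|\mathrm{Hess}\,u\| \leq \sqrt{n}/\rho$ coming from the $n$ principal curvatures of $S$, each bounded by $1/\rho$ via the uniform touching ball condition), the quantity $(\nu_q \cdot \omega)^2$ remains strictly less than $1$ with a quantitative gap on all of $N_{t_0}$.

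Second, I would consider the height function $h(q) := q \cdot \omega$ on $\Sigma$. It satisfies $h \equiv m$ on $\partial \Sigma$, $h < m$ elsewhere, and $|\nabla_\Sigma h|^2 = 1 - (\nu_q \cdot \omega)^2$. By the previous step, $|\nabla_\Sigma h|$ is bounded below on $N_{t_0}$, so the gradient flow of $\nabla_\Sigma h / |\nabla_\Sigma h|^2$ is smooth on this region, makes $h$ increase at unit rate, and carries every $q \in N_t$ to $\partial \Sigma$ in finite time $m - h(q) \leq t$. This flow identifies $N_t$ diffeomorphically with $\partial \Sigma \times [0, t]$, i.e.\ as a collar; consequently $\Sigma^t = \Sigma \setminus N_t$ has the same number of connected components as $\Sigma$, namely one.

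The main obstacle is the careful tracking of constants needed to recover the sharp value of $t_0$: both the propagation of transversality (keeping $|\nabla_\Sigma h|$ uniformly bounded away from zero on $N_{t_0}$) and the absence of focal points of $\partial \Sigma$ in $\Sigma$ within distance $t_0$ depend on the principal curvatures of $\partial \Sigma$ as a submanifold of $\Sigma$, which one controls via Propositions~\ref{prop Luigi I}--\ref{prop Luigi II} together with the bound $1/\rho$ on the principal curvatures of $S$ encoded by the $C^2$-regularity.
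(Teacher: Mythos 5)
Your overall strategy (propagate transversality into a collar of $\pa \Sigma$, then use a flow to retract) is in the same spirit as the paper's proof, but two steps that carry the real content are not actually established. First, the uniform lower bound on $|\nabla_\Sigma h|$, i.e.\ the bound of $|\nu_q\cdot\omega|$ strictly away from $1$ on all of $N_{t_0}$, does not follow from what you write: the Lipschitz propagation gives $|\nu_q\cdot\omega|\le \mu+\sqrt{2}\,t_0/\rho=\mu+\tfrac{1}{\sqrt{2n}}\sqrt{1-2\mu^2}$, which equals exactly $1$ in the borderline case $n=1$, $\mu=1/2$, so no quantitative gap survives. Your proposed rescue via ``second-order information'' with a Hessian bound $\|\mathrm{Hess}\,u\|\le \sqrt n/\rho$ is not carried out and is itself dubious (the Hessian of a graph function is the second fundamental form only up to factors involving $\nabla u$, so the bound $1/\rho$ on the principal curvatures does not directly give it). In fact the specific value $t_0=\tfrac{\rho}{2\sqrt n}\sqrt{1-2\mu^2}$ in the paper does not arise from refining an interior transversality estimate at all: the hypothesis $\nu_p\cdot\omega\le\mu$ is used only on $\pa\Sigma$, to get $|\nu'_p-\nu_p|\le\sqrt2\mu$ and hence, via Lemma \ref{lemma change normal}, a graph representation of the reflected surface $S^*$ over $B_{\rho\sqrt{1-2\mu^2}}\cap(\nu'_p)^\perp$, together with the curvature bound $|\kappa_i'|\le \frac{1}{\rho(1-\mu^2)}$ on $\pa\Sigma$ from Proposition \ref{prop Luigi I}; the factor $\tfrac{1}{2\sqrt n}$ comes from fitting a tubular box of half-width $t_0$ inside that graph domain, not from a Hessian estimate.

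Second, the identification ``the flow identifies $N_t$ diffeomorphically with $\pa\Sigma\times[0,t]$'' conflates the intrinsic-distance collar $N_t=\{d_\Sigma(\cdot,\pa\Sigma)\le t\}$ with a height slab. The gradient flow of $h$ naturally trivializes sets of the form $\{m-s\le h\le m\}$, and one only has the one-sided inequality $m-h(q)\le d_\Sigma(q,\pa\Sigma)$; the reflected cap can stay uniformly close to $\pi_m$ in height while being intrinsically far from $\pa\Sigma$, so $N_t$ is in general a proper, non flow-saturated subset of the corresponding height slab, and the claimed product structure on $N_t$ (hence the ``removing a collar preserves connectedness'' conclusion) is unjustified as written. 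Even the weaker pushing argument has a gap: a flow line starting at a point of $\Sigma^t$ is only guaranteed to keep $d_\Sigma\ge\max(t-s/c,\,s)$, so it may dip below intrinsic distance $t$ before the height-based bound takes over. What is actually needed (and what the paper provides) is that the normal exponential map of $\pa\Sigma$ inside $S^*$ is a genuine one-sided tubular neighborhood of width $t_0$ — no focal points and injectivity at that scale — controlled through the curvature bound on $\pa\Sigma$ and the graph representation; you flag this in your last paragraph as ``the main obstacle'' but the height-flow argument you give does not substitute for it.
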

\begin{proof}
Let $S^*$ be the reflection of $S$ about $\pi_m$. We notice that, by construction of the method of moving planes, $\Sigma$ and $\pi_m$ enclose a bounded simply connected domain of $\RR^{n+1}$. Moreover, $\nu_p \cdot \om \geq 0$ on $\pa \Sigma$ and equation \eqref{trasversale} implies that $\pi_m$ intersects $S^*$ transversally. Hence, the boundary of $\Sigma$ is a manifold of class $C^2$. We prove that the boundary of $\Sigma^t$ lies in a tubular neighbourhood of the boundary of $ \Sigma$ in $S^*$.  Then, since $\Sigma$ is connected, every two points in $\Sigma^t$ can be joined by a curve in $\Sigma$ which can be pushed into $\Sigma^t$ by using the normal vector field to the boundary $\Sigma$.


According to Section \ref{subsect Luigi}, we denote the boundary of $\Sigma$ by $\Sigma'$ and we orient $\Sigma'$ by the Gauss map satisfying
$$
\nu_p \cdot \nu_p' = 1 - (\nu_p \cdot \om)^2
$$
(see formula \eqref{nu primo}). Hence, from \eqref{trasversale}, we have that
$$
\nu_p \cdot \nu_p' \geq 1 - \mu^2.
$$
Since the principal curvatures of $S$ are bounded by $\rho^{-1}$, from Proposition \ref{prop Luigi I} the principal  curvatures $\kappa_i'$ of $\Sigma'$ satisfy
\begin{equation} \label{ki prime 42}
|\kappa_i'| \leq \frac{1}{\rho(1- \mu^2)}, \quad i=1,\ldots,n-1.
\end{equation}
From Lemma \ref{lemma change normal}, we can write $S^*$ as a graph of function $u: B_{r} \cap (\nu_p')^\perp \to \RR$, with $r=\rho \sqrt{1-2\mu^2}$.  Moreover, \eqref{ki prime 42} and Lemma \ref{lemma bounds rho} yield that $\Sigma'$ is locally the graph of $u$ restricted to $B_{r} \cap T_p {\Sigma'}$. 
Taking into account that  $(\nu_p')^\perp= T_p\Sigma' \oplus \langle \om \rangle$, we consider the subset of $S^*$ given by
$$
Q(p)=\{q=p+ \xi +  s \om + u( \xi +  s \om) \nu_p' \, : \ \ \xi \in B_{r} \cap T_p \Sigma', \ |s| \leq t_0  \},
$$
which contains a tubular neighborhood of $\Sigma' \cap B_{t_0}(p)$ of radius at least $t_0$.
Hence, the set
$$
\mathcal{Q} = \bigcup_{p \in \Sigma'} Q(p)
$$
contains a tubular neighborhood of $\Sigma'$ in $S^*$ of radius at least $t_0$ and we conclude. 
\end{proof}

\begin{lemma} \label{lemma connected II}
Let $0< \de \leq \rho(8\sqrt{n})^{-1}$. If we suppose that there exists a connected component $\Gamma^\de$ of $\Sigma^\de$ satisfying 
$$
0 \leq \nu_p \cdot \om \leq \frac{1}{8},
$$  
then, $\Sigma^\de$ is connected.
\end{lemma}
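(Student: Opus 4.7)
My plan is to reduce Lemma \ref{lemma connected II} to Lemma \ref{lemma connected} by upgrading the interior hypothesis $0\leq \nu_p\cdot\omega\leq 1/8$ on $\Gamma^\delta$ to a global bound $\nu_q\cdot\omega\leq 1/2$ on $\partial\Sigma$. Once this is in hand, Lemma \ref{lemma connected} applied with $\mu=1/4$ yields connectedness of $\Sigma^t$ for every $t\leq t_0=\frac{\rho}{2\sqrt{n}}\sqrt{1-1/8}=\frac{\rho}{2\sqrt{n}}\sqrt{7/8}$; since
\begin{equation*}
\delta \leq \frac{\rho}{8\sqrt{n}} \leq \frac{\rho}{2\sqrt{n}}\sqrt{7/8} = t_0
\end{equation*}
for every $n\geq 1$, the desired conclusion follows.

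\textbf{Propagation of the normal bound.} Since $S$ satisfies a uniform touching-sphere condition of radius $\rho$, all principal curvatures of $S^*$ (which contains $\Sigma$) are bounded in absolute value by $1/\rho$, so the Gauss map on $\Sigma$ is $1/\rho$-Lipschitz continuous with respect to the intrinsic distance $d_\Sigma$. For any $q\in \partial\Sigma$ lying at intrinsic distance $\leq \delta$ from some $p\in \overline{\Gamma^\delta}$, this immediately gives
\begin{equation*}
\nu_q\cdot\omega \;\leq\; \nu_p\cdot\omega + \frac{d_\Sigma(p,q)}{\rho} \;\leq\; \frac{1}{8}+\frac{\delta}{\rho} \;\leq\; \frac{1}{8}+\frac{1}{8\sqrt{n}} \;\leq\; \frac{1}{4},
\end{equation*}
valid for every $n\geq 1$. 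Thus the bound $\nu_q\cdot\omega\leq 1/4$ holds on the "base" of $\overline{\Gamma^\delta}$, i.e.\ on $\{q\in\partial\Sigma: d_\Sigma(q,\overline{\Gamma^\delta})\leq \delta\}$.

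\textbf{Main obstacle: covering all of $\partial\Sigma$.} The delicate step, which I expect to be the crux of the proof, is showing that the base of $\overline{\Gamma^\delta}$ is in fact all of $\partial\Sigma$. I would argue by contradiction: if a second component $\Gamma'\neq \Gamma^\delta$ of $\Sigma^\delta$ existed, an adjacent portion of $\partial\Sigma$ would lie outside the $\delta$-neighborhood of $\overline{\Gamma^\delta}$. Since $\Sigma$ is connected, one can build a piecewise-geodesic path in $\Sigma$ joining $\Gamma^\delta$ to $\Gamma'$ (analogous to the chain construction of Lemma \ref{lemma harnack chain}), and one then exploits the nearly "vertical" structure forced by $\nu_p\cdot\omega\leq 1/8$ on $\Gamma^\delta$---together with the control on the curvature of $\partial\Sigma$ provided by Proposition \ref{prop Luigi I}---to show that the thin tubular neighborhood of $\partial\Sigma$ in which such a path must travel cannot fit between two separated components of $\Sigma^\delta$ when $\delta\leq \rho/(8\sqrt{n})$. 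This "covering" argument, which turns the pointwise bound on $\Gamma^\delta$ into a global bound on $\partial\Sigma$, is the step I am least sure about and the one where some of the tubular-neighborhood construction from the proof of Lemma \ref{lemma connected} will likely have to be adapted to the present setting.
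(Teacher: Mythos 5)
Your overall strategy coincides with the paper's: propagate the bound $\nu\cdot\om\le 1/8$ from $\Gamma^\de$ to the boundary of $\Sigma$ using the curvature bound $1/\rho$ (the paper does this via \eqref{bound on nu N+1} and Lemma \ref{lemma bound on d Ga}, essentially your Lipschitz--Gauss-map argument), and then invoke Lemma \ref{lemma connected}. But the step you yourself flag as the one you are ``least sure about'' is precisely the crux of the lemma, and your proposal does not prove it. You only obtain the normal bound at boundary points $q$ with $d_\Sigma(q,\overline{\Gamma^\de})\le\de$, whereas Lemma \ref{lemma connected} requires the bound at \emph{every} point of $\pa\Sigma$; so everything hinges on showing that all of $\pa\Sigma$ (indeed all of $\Sigma\setminus\Gamma^\de$) lies within intrinsic distance comparable to $\de$ of $\Gamma^\de$. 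Your sketched contradiction argument (a second component $\Gamma'$ of $\Sigma^\de$, a piecewise-geodesic chain, a tubular-neighborhood ``cannot fit'' claim) does not establish this: the obstruction is not only a putative second component of $\Sigma^\de$, but the possibility of a region of $\Sigma$ at distance less than $\de$ from $\pa\Sigma$ on which $\nu\cdot\om$ is large, and nothing in your sketch rules this out or quantifies the ``thin tubular neighborhood'' claim.

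The paper closes this gap with a concrete geometric observation that is absent from your proposal: at every boundary point of $\Gamma^\de$ the interior and exterior touching balls of radius $\rho$ intersect the critical hyperplane $\pi_m$ (such points are within distance about $\de$ of $\pi_m$ and their normals are nearly orthogonal to $\om$, since $\nu\cdot\om\le\mu_0=1/8$). Because $\Sigma$ lies between these touching balls and on one side of $\pi_m$, an elementary (if tedious) computation then yields the quantitative covering estimate
\begin{equation*}
d_\Sigma(q,\Gamma^\de)\;\le\;\rho\,\arcsin\!\left((1+2\mu_0)\,\frac{\de}{\rho}\right)
\qquad\text{for every } q\in\Sigma\setminus\Gamma^\de ,
\end{equation*}
which gives the normal bound on all of $\pa\Sigma$ and allows the application of Lemma \ref{lemma connected} with an admissible $\mu$ and $\de\le t_0$. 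Without this (or an equivalent trapping argument exploiting the touching-sphere condition and the position of $\pi_m$), your reduction to Lemma \ref{lemma connected} is incomplete, so the proposal as written has a genuine gap at its central step.
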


\begin{proof}
In order to simplify the notation we let $\mu_0=1/8$. Notice that the interior and exterior touching balls at every boundary points of $\Gamma^\de$ intersects $\pi_m$. By using this argument and after elementary but tedious calculations, we can prove that any $q \in \Sigma \setminus {\Gamma}^\de$ is such that 
$$d_\Sigma(q, \Gamma^\de) \leq \rho \arcsin \left((1+ 2\mu_0) \frac{\de}{\rho}\right).$$
In particular, for any $q \in \pa \Sigma$ there exists $p\in \pa \Sigma^\de$ such that 
$$d_\Sigma(q, p) \leq \rho \arcsin \left((1+ 2\mu_0) \frac{\de}{\rho}\right),$$
and from Lemma \ref{lemma bounds rho} we obtain that 
$$
|\nu_p-\nu_q| \leq \sqrt{2} \arcsin \left((1+ 2\mu_0) \frac{\de}{\rho}\right).
$$
By writing $\nu_q \cdot \om = \nu_p \cdot \om - (\nu_q-\nu_p) \cdot \om$ and by triangle inequality we get
$$
|\nu_q \cdot \om| \leq \mu_0 +  \sqrt{2} \arcsin \left((1+ 2\mu_0) \frac{\de}{\rho}\right);
$$
our assumptions on $\de$ implies the following (rougher but simpler) bound: 
$$
|\nu_q \cdot \om| \leq 2\mu_0 +  \frac{1}{2}.
$$
Now we use Lemma \ref{lemma connected} by setting $\mu=2 \mu_0+1/2$ and imposing that $\de \leq t_0$, and we conclude.
\end{proof}

Now, we focus on the proof of Theorem \ref{thm approx symmetry 1 direction}. It will be divided in four cases, which we study in the following subsections. In each case, $\de$ will be fixed to be  
$$ \de= \min \left(\frac{\rho}{2^6}, \frac{\rho}{8\sqrt{n}}\right).$$
Moreover, the constants $\ep$ and $C$ can be chosen as 
$$
\ep= \min \{\ep_0,\ep_1,\ep_2, \ep_3 \},
$$
and 
$$
C=\frac{5}{4} C_1 K_1 K_2 K_3,
$$
respectively. Here, $\ep_0$ is given by \eqref{epsilon 0}, and $\ep_1, \ep_2, \ep_3$ and $C_1$ will be defined in the following. Moreover, $K_1, K_2, K_3$ are given by Lemmas \ref{lemma Harnack interior}, \ref{lemma Carleson}, \ref{lemma hopf}, respectively, where $M$ is chosen accordingly to Lemma \ref{lemma bounds rho} by assuming that $|x| \leq \rho/2$. Hence, the constants $\ep$ and $C$ depend only on $n$ and upper bounds on $\rho^{-1}$ and $|S|$.

\subsubsection{Case 1. $d_{\Sigma}(p_0,\pa \Sigma) > \de$ and $d_{\Sigma}(p,\pa \Sigma) \geq \de$} \label{subsec case1}

In this first case we assume that $p_0$ and $p$ are interior points of $\Sigma$, which are far from $\pa \Sigma$ more than $\de$. 
We remark that in this case, $p_0$ is an interior touching point between $\Sigma$ and $\hat\Sigma$, so that case (i) in the method of moving planes occurs. We first assume that $p_0$ and $p$ are in the same connected component of $\Sigma^\de$; then, Lemma \ref{lemma connected II} will be used in order to show that $\Sigma^\de$ is in fact connected. 

Let
\begin{equation*}
    r_0=\rho \sin \frac{\de}{2\rho}.
\end{equation*}
Since $p$ and $p_0$ are in a connected component of $\Sigma^\de$, there exist: $\{p_1,\dots,p_N\}$ in the connected component of $\Sigma^{\delta/2}$ containing $p_0$, a chain $\{\mathcal U_{r_0}(p_i)\}_{\{i=0,\ldots,N\}}$ of open sets of $\Sigma$ and a sequence of maps $u_i: \ B_{r_0}\cap T_{ p_i}\Sigma \to \RR,\ i=0,\ldots,N$,
as in Lemma \ref{lemma harnack chain}, where $r_i=(1-\ep)^i r_0$. We notice that $\Sigma$ and  $\hat\Sigma$ are tangent at
$p_0$ and that in particular the two normal vectors to $\Sigma$ and $\hat \Sigma$ at $p_0$ coincide. We stress that $\hat \Sigma \subset S$ and that, since $r_0<\rho$, from Lemma \ref{lemma bounds rho} we have that $S$  is locally represented near $p_0$  as a graph of a map
$\hat u_0\colon B_{r_0}\cap T_{p_0}S\to \RR$. 

%
Lemma \ref{lemma bounds rho} implies that $| \nabla u_0|, |\nabla \hat u_0 | \leq M$ in $B_{r_0} \cap T_{p_0} \Sigma$, where $M$ is some constant which depends only on $r_0$, i.e. only on $\rho$. Now, we use Lemma \ref{lemma Harnack interior}: since $u_0(0)=\hat u_0(0)$
and $u_0 \geq \hat u_0$, \eqref{eq Harnack C1} gives
\begin{equation}\label{harnack step 1}
\|u_0- \hat u_0\|_{C^1(B_{r_0/4}\cap T_{p_0} \Sigma)} \leq K_1 \,{\rm osc}( H),
\end{equation}
where $K_1$ depends only on $n$ and $M$. We notice that from \eqref{Vr0 subset} we have that $p_1\in\overline{ \mathcal U}_{r_0/4}(p_0)$.  Let $x_1$ be the projection of $p_1$ onto $T_{p_0}\Sigma$ and let
$$
\hat p_1:=p_0+x_1+\hat u_{0}(x_1)\nu_{p_0}\in \hat \Sigma\,.
$$
From \eqref{harnack step 1} we obtain that
\begin{equation*}
|\nabla u_0(x_1)-\nabla \hat u_0(x_1)| \leq K_1\,{\rm osc}(H),
\end{equation*}
and therefore
Lemma \ref{lemma diff normali} yields
\begin{equation}\label{norm diff step 1}
|\nu_{p_1} - \nu_{\hat p_1} | \leq  \frac{\sqrt{5}}{2}K_1\, \oscH .
\end{equation}

As already mentioned, we have a local parametrization of $\Sigma$ in a neighborhood of $p_1$ as a graph of the $C^2$ function $u_1\colon B_{r_0}\cap T_{p_1}\Sigma\to \RR$. Lemma \ref{lemma change normal} and \eqref{norm diff step 1} imply that $S$ can be locally parameterized by a graph of function $\hat u_1\colon B_{r_1}\cap T_{p_1} \Sigma \to \RR$, being $r_1 < r_0 \sqrt{1-\frac{5}{4} K_1^2 \ep^2}$, since $\ep \leq \ep_1$ with 
\begin{equation} \label{ep 1}
\ep_1 = \left( 1 + \frac 5 4 K_1^2 \right)^{-1}.
\end{equation}
Moreover, \eqref{u estimates v} yields that
\begin{equation*}
|u_1(0)-\hat u_1(0) | \leq \|u_0-\hat u_0\|_{C^0(B_{r_0/4}\cap T_{p_0}\Sigma)} + \sqrt{5} r_0 K_1 \oscH;
\end{equation*}
from \eqref{harnack step 1} and since $u_1-\hat u_1 \geq 0$ by construction, we find that
\begin{equation*}
0 \leq u_1(0) - \hat u_1(0) \leq (1+r_0\sqrt{5}) K_1 \oscH.
\end{equation*}
We use Lemma \ref{lemma Harnack interior} and obtain that
\begin{equation}\label{harnack step 2}
\|u_1-\hat u_1\|_{C^1(B_{r_1/4}\cap T_{p_1} \Sigma)} \leq K_1[(1+r_0\sqrt{5}) K_1 +1] \,\oscH.
\end{equation}
Now, \eqref{harnack step 2} is the analogue of \eqref{harnack step 1} with $p_1$ instead of $p_0$, and we can iterate until we obtain two functions
\begin{equation*}
u_N,\hat u_N:\ B_{r_N}\cap T_p \Sigma \to \RR,
\end{equation*}
such that
\begin{equation} \label{harnack step final}
\| u_N- \hat u_N\|_{C^1(B_{r_N /4}\cap T_p \Sigma)} \leq C_1 \,{\rm osc}(H).
\end{equation}
A choice of $\hat p$ as in the statement of Theorem \ref{thm approx symmetry 1 direction} is then given by  
$$
\hat p=p + \hat u_N(0) \nu_p\,,
$$ 
since \eqref{bound on dist} is implied by \eqref{harnack step final} and Lemma \ref{lemma diff normali}.

We notice that a choice of the constant $C_1$ in \eqref{harnack step final} is given by
\begin{equation}\label{C1}
C_1=\Big( (1+r_0\sqrt{5})K_1 + 1 \Big)^{N_0+1},
\end{equation}
where $N_0$ is given by \eqref{n 0}. Hence the constant $C_1$ depends only on $n$, $\de/\rho$, and an upper bound on $|S|$.

Once we have \eqref{harnack step final} for any $p$ in a connected component of $\Sigma^\de$, we have in fact that 
$$
\nu_q \cdot \om \leq \frac 1 8,
$$
for any point $q$ at the boundary of such a connected component, as it is implied by the following Lemma.

\begin{lemma} \label{lemma bound on nu1}
Let $q\in \Sigma $ be such that $d_\Sigma(q,\pa \Sigma) \leq \de$. Assume that the point
\begin{equation*}
\hat q=q-\alpha \nu_q
\end{equation*}
is on $\hat \Sigma$ and is such that
\begin{equation}\label{nu p - nu q}
|\nu_q-\nu_{\hat q}| \leq \alpha,
\end{equation}
with $\alpha + 2\de < \rho$. Then we have that
\begin{equation} \label{nu p cdot om leq}
0 \leq \nu_q \cdot \om \leq \sqrt{8 \frac{\de^2}{\rho^2} + \frac{\alpha}{2}}.
\end{equation}
\end{lemma}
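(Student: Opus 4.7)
The strategy is to exploit the proximity of $q$ to $\pi_m$ (a consequence of its closeness to $\partial\Sigma$) together with the quantitative touching-ball condition on $S$ in order to compare $\nu_q$, the normal to $S^\ast$ at $q$, with a normal to $S$ at a nearby point.

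First I pick a nearest boundary point $q^\ast\in\partial\Sigma$, so that $|q-q^\ast|\leq d_\Sigma(q,q^\ast)\leq\delta$. Since $\partial\Sigma\subset\pi_m$, this shows that $q$ lies within Euclidean distance $\delta$ of $\pi_m$. Let $q^m$ denote the reflection of $q$ across $\pi_m$; because $q\in\Sigma\subset S^\ast$, the reflected point $q^m$ lies on $S$, and $|q-q^m|=2\dist(q,\pi_m)\leq 2\delta$. The assumption $\hat q=q-\alpha\nu_q\in S$ together with the triangle inequality then gives
$$
|q^m-\hat q|\;\leq\;2\delta+\alpha\;<\;\rho,
$$
so $q^m$ and $\hat q$ are two points of $S$ within mutual distance less than $\rho$.

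Next I compare the corresponding normals to $S$. By Lemma~\ref{lemma bounds rho}, $S$ is a $C^2$ graph over $B_\rho\cap T_{\hat q}S$, and since $|q^m-\hat q|<\rho$ we may write $q^m=\hat q+x+u(x)\nu_{\hat q}$ with $|x|\leq|q^m-\hat q|\leq 2\delta+\alpha$. Inequality~\eqref{bound on nu N+1} immediately produces
$$
|\nu_{q^m}-\nu_{\hat q}|\;\leq\;\sqrt{2}\,\frac{2\delta+\alpha}{\rho}.
$$
On the other hand, since $q^m$ is the mirror image of $q$ across $\pi_m$ and reflection is an orientation-matching isometry mapping $S^\ast$ onto $S$, the normal to $S$ at $q^m$ is the mirror image of $\nu_q$:
$$
\nu_{q^m}\;=\;\nu_q-2(\nu_q\cdot\omega)\,\omega,\qquad\text{hence}\qquad|\nu_q-\nu_{q^m}|\;=\;2|\nu_q\cdot\omega|.
$$
Combining these two displays with the hypothesis $|\nu_q-\nu_{\hat q}|\leq\alpha$ via a single triangle inequality yields
$$
2|\nu_q\cdot\omega|\;\leq\;\alpha+\sqrt{2}\,\frac{2\delta+\alpha}{\rho}.
$$
Squaring and absorbing the cross terms using $\alpha+2\delta<\rho$ produces the stated upper bound $(\nu_q\cdot\omega)^2\leq 8\delta^2/\rho^2+\alpha/2$.

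The lower bound $\nu_q\cdot\omega\geq 0$ is a direct consequence of the moving-plane stopping configuration: at the critical position $\Omega_m\subset\Omega$, and near $\partial\Sigma$ the reflected cap $\Sigma$ lies on the appropriate side of $\hat\Sigma$, which forces the inward normal to $S^\ast$ at $q$ to have nonnegative $\omega$-component. The main technical obstacle is the final squaring step, where the constants must be tracked carefully and the hypothesis $\alpha+2\delta<\rho$ is used twice---first to guarantee that $q^m$ lies in $\mathcal U_\rho(\hat q)$ so that Lemma~\ref{lemma bounds rho} can be invoked, and second to dominate the mixed term of order $\alpha\delta/\rho$ by $\alpha/2$. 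No analytic input beyond the quantitative touching-sphere estimate of Lemma~\ref{lemma bounds rho} is required.
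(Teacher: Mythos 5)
Your construction is, in substance, the paper's own: you reflect $q$ to $q^m\in S$, observe that $|q-q^m|\le 2\de$ and hence $|q^m-\hat q|\le \alpha+2\de<\rho$, compare the normals of $S$ at $q^m$ and at $\hat q$ via the touching-ball estimate \eqref{bound on nu N+1}, use that $\nu_{q^m}$ is the mirror image of $\nu_q$ (so $|\nu_q-\nu_{q^m}|=2\,\nu_q\cdot\om$, equivalently $\nu_q\cdot\nu_{q^m}=1-2(\nu_q\cdot\om)^2$), and read off $\nu_q\cdot\om\ge 0$ from the moving-plane configuration. All of this matches the paper's argument and is fine.

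The weak point is the final sentence, ``squaring and absorbing the cross terms using $\alpha+2\de<\rho$''. Your chain yields the linear estimate $2\,\nu_q\cdot\om\le \alpha+\sqrt{2}\,\frac{\alpha+2\de}{\rho}$, and squaring it gives $(\nu_q\cdot\om)^2\le \frac{(\alpha+2\de)^2}{2\rho^2}+\frac{\alpha(\alpha+2\de)}{\sqrt{2}\,\rho}+\frac{\alpha^2}{4}$. Under the sole hypothesis $\alpha+2\de<\rho$ the cross term can be as large as roughly $\alpha/\sqrt{2}>\alpha/2$, and $\alpha^2/4$ is not dominated by $\alpha/2$ either, so \eqref{nu p cdot om leq} does not follow by the absorption you describe: for instance $\rho=64$, $\de=0.99$, $\alpha=1.98$ satisfies the hypotheses, your right-hand side exceeds $1$, while the right-hand side of \eqref{nu p cdot om leq} is less than $1$. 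The paper avoids this loss by never squaring the hypothesis \eqref{nu p - nu q}: it writes $1-2(\nu_q\cdot\om)^2=\nu_q\cdot\nu_{q^m}\ge \nu_{\hat q}\cdot\nu_{q^m}-|\nu_q-\nu_{\hat q}|\ge \sqrt{1-\left(\frac{\alpha+2\de}{\rho}\right)^2}-\alpha$, using the dot-product form of \eqref{bound on nu N+1}, so that $\alpha$ enters linearly with coefficient exactly $\tfrac12$ and one gets $(\nu_q\cdot\om)^2\le \frac{(\alpha+2\de)^2}{2\rho^2}+\frac{\alpha}{2}$. (Admittedly, the last absorption of $\frac{(\alpha+2\de)^2}{2\rho^2}$ into $8\de^2/\rho^2$ implicitly uses $\alpha\le 2\de$, which holds in every application of the lemma; in that regime your argument also gives the conclusion, only with slightly worse constants. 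Still, as written, the ``squaring and absorbing'' step is the genuine gap: to recover the stated constant you should keep the hypothesis linear, as the paper does.)
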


\begin{proof}
Let $q^m$ be the reflection of $q$ about $\pi_m$ and let
$$
t=\nu_q \cdot \om.
$$
By construction of the method of moving planes, it is clear that $t \geq 0$ and the first inequality in \eqref{nu p cdot om leq} follows.
We denote by $\nu_{q^m}$ the inner normal vector to $S$ at $q^m$. 
Since $\nu_q \cdot \om = - \nu_{q^m} \cdot \om$ and $\nu_q-\nu_{q^m}= 2 t \om$, we have that
\begin{equation}\label{nu p nu bar p}
\nu_q \cdot \nu_{q^m} = 1 -2t^2.
\end{equation}
We notice that $q^m$ and $\hat q$ both lie in  $S$ and  $|q^m- \hat q| \leq \alpha + 2 \de$, which implies that $\hat q \in \mathcal{U}_{\rho}(q^m)$ provided that $\alpha + 2 \de < \rho$. Hence, \eqref{bound on nu N+1} yields 
that
\begin{equation*}
\nu_{\hat q}\cdot \nu_{q^m} \geq \sqrt{1- \left(\frac{\alpha +2\de}{\rho}\right)^2}.
\end{equation*}
From \eqref{nu p - nu q} and \eqref{nu p nu bar p} we find that
\begin{equation*}
1-2t^2 \geq \sqrt{1- \left(\frac{\alpha +2\de}{\rho}\right)^2} - \alpha,
\end{equation*}
which gives
$$t^2 \leq \frac{1}{2} \Big( \frac{\alpha + 2\de}{\rho} \Big)^2 + \frac{\alpha}{2},$$
and we obtain the second inequality in \eqref{nu p cdot om leq}.
\end{proof}

The conclusion of Case 1 follows from the following argument. From  \eqref{harnack step final} we know that for any $q$ on the boundary of the connected component of $\Sigma^\de$ containing $p_0$ there exists $\hat{q} \in \hat{\Sigma}$ such that 
$$
|q-\hat q| + |\nu_q - \nu_{\hat q}| \leq C_1 \oscH. 
$$
We apply Lemma \ref{lemma bound on nu1} by letting $\al=C_1 \oscH$ and, since $\ep \leq \ep_2$, with
$$
\ep_2 \leq \frac{1}{2^6 C_1},
$$ 
we obtain that $0 \leq \nu_q \cdot \om \leq 1/8$. Hence, from Lemma \ref{lemma connected II} we have that $\Sigma^\de$ is connected and we conclude.

\medskip
\subsubsection{Case 2: $d_\Sigma(p_0,\pa \Sigma) \geq \de$ and $d_\Sigma(p,\pa \Sigma) < \de$} \label{subsec case2}
Here the idea consists in extending the estimate in Subsection \ref{subsec case1} to the whole $\Sigma$. This will be done by using Carleson type estimates given by Lemma \ref{lemma Carleson}. We remark that its application is not trivial, since we need more information on how $S$ intersects $\pi_m$.

\medskip 
Accordingly to \eqref{definitions} in Subsection \ref{subsection Alexandrov}, for a given point $p \in\Sigma$ such that $d_\Sigma (p, \pa \Sigma) \leq \de$, we denote  by  $p^m$ the point of $S$ obtained by reflecting $p$ about  $\pi_m$. $S$ can be locally written as a graph of function $u\colon  B_{\rho}\cap T_p S \to\RR$. For $0<r<\rho$,  we define ${U}_r^*(p)$ as the reflection of $\mathcal{U}_r(p^m)$ about $\pi_m$ and we denote by $U_r(p)$ the subset of $\Sigma$ obtained by
$$U_r(p) = {U}_r^*(p) \cap  \{q\in \RR^{n+1}\,\,:\,\, q\cdot \omega<m\}.$$ 
Moreover, we denote by $E_r$ the open subset of  $B_r\cap T_p\Sigma$ such that 
\begin{equation} \label{U al bordo}
U_r(p) = \{ p + x + u(x) \nu_p:\ x \in E_r\}\,.
\end{equation}


The next result is a consequence of Propositions \ref{prop Luigi I}, \ref{prop Luigi II} in Subsection \ref{subsect Luigi}. 

\begin{lemma} \label{coroll curvatures}
Let $q\in \Sigma $ be such that $d_\Sigma(q,\pa \Sigma) = \de$ and $ 0 \leq \nu_q \cdot \om \leq  1/4$. Let
$U'={U}^*_{\sqrt{2} \rho /8}(q)\cap \pi_m$ and $U''$ be the orthogonal projection of $U'$ onto $T_q\Sigma$.
Then $U''$ is a hypersurface of class $C^2$ of $T_q\Sigma$ whose principal curvatures are bounded by 
$$ \mathcal{K} =  \frac{4\de}{\rho^2}.  $$
\end{lemma}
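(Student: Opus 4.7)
The plan is to apply Propositions \ref{prop Luigi I} and \ref{prop Luigi II} in sequence along the chain $U^*_{\sqrt{2}\rho/8}(q)\to U'\to U''$. Since the reflected surface $S^*$ is a rigid motion of $S$, it inherits the uniform touching ball condition of radius $\rho$, so its principal curvatures satisfy $|\kappa_j|\leq 1/\rho$, and $U^*_{\sqrt{2}\rho/8}(q)$ is a $C^2$ patch of $S^*$.

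First I would establish that $U'$ is a $C^2$ hypersurface of $\pi_m$. Applying \eqref{bound on nu N+1} to $S^*$, every $p\in U^*_{\sqrt{2}\rho/8}(q)$ satisfies $|\nu_p-\nu_q|\leq\sqrt{2}\cdot(\sqrt{2}/8)=1/4$; combined with the hypothesis $\nu_q\cdot\om\leq 1/4$ this gives $|\nu_p\cdot\om|\leq 1/2$, so $\pi_m$ is transverse to $U^*_{\sqrt{2}\rho/8}(q)$. Orienting $U'$ by the Gauss map \eqref{nu primo}, Proposition \ref{prop Luigi I} in the form \eqref{bound curv Iuigi} yields
\begin{equation*}
|\kappa_i'(p)|\leq\frac{1/\rho}{1-(\nu_p\cdot\om)^2}\leq\frac{4}{3\rho}, \qquad p\in U'.
\end{equation*}

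Next I would apply Proposition \ref{prop Luigi II} with $\om_1=\om$ and $\om_2=\nu_q$. Since $\nu_p'\in\pi_m$ and $\om\perp\pi_m$, the vectors $\om$ and $\nu_p'$ are orthonormal, so $(\om\cdot\nu_q)^2+(\nu_q\cdot\nu_p')^2$ is the squared length of the projection of $\nu_q$ onto $\mathrm{span}(\om,\nu_p')$. Using $\nu_p\cdot\nu_q\geq 1-1/32$, $\nu_p\cdot\nu_p'\geq 3/4$, together with the formula $\nu_q\cdot\nu_p'=(\nu_q\cdot\nu_p-(\nu_q\cdot\om)(\nu_p\cdot\om))/\sqrt{1-(\nu_p\cdot\om)^2}$, one gets $\nu_q\cdot\nu_p'\geq 27/32$; this ensures both that $\nu_q$ is nowhere tangent to $U'$ (so $U''$ is indeed a $C^2$ hypersurface of $T_q\Sigma$) and that the denominator in Proposition \ref{prop Luigi II} is bounded away from zero. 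Combining with the previous curvature bound then yields
\begin{equation*}
|\kappa_i''(\mathrm{pr}(p))|\leq C\,\frac{|\om\cdot\nu_q|}{\rho}, \qquad p\in U'.
\end{equation*}

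The main obstacle, and the step producing the factor $\de$ in the conclusion $\mathcal{K}=4\de/\rho^2$, is upgrading the qualitative hypothesis $\nu_q\cdot\om\leq 1/4$ into the quantitative bound $|\om\cdot\nu_q|\lesssim \de/\rho$. I would exploit $d_\Sigma(q,\pa\Sigma)=\de$: choosing a nearest boundary point $p_0\in\pa\Sigma\subset\pi_m$ and propagating \eqref{bound on nu N+1} along an intrinsic curve from $q$ to $p_0$ of length at most $\de$ gives $|\nu_q-\nu_{p_0}|\lesssim \de/\rho$, reducing the task to estimating $|\nu_{p_0}\cdot\om|$ at a point where $S$ meets $\pi_m$. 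The latter is then controlled by the touching ball geometry at $p_0$ together with the inclusion $\Omega_m\subset\Omega$ coming from the moving planes construction, which forces $\nu_{p_0}$ to be nearly parallel to $\pi_m$ at the same scale $\de/\rho$. Inserting this into the estimate above closes the proof with the claimed constant.
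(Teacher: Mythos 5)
Your first three steps reproduce the paper's argument almost verbatim: the bound $|\nu_\zeta-\nu_q|\leq 1/4$ on $U^*_{\sqrt2\rho/8}(q)$ from \eqref{bound on nu N+1}, the consequence $|\nu_\zeta\cdot\om|\leq 1/2$ giving transversality of $\pi_m$, the curvature bound on $U'$ via Proposition \ref{prop Luigi I}, and the application of Proposition \ref{prop Luigi II} with $\om_1=\om$, $\om_2=\nu_q$, with the denominators bounded below ($\nu_\zeta\cdot\nu'_\zeta\geq 3/4$, $\nu_q\cdot\nu'_\zeta\geq 1/2$ in the paper). Up to this point the proposal is sound and matches the paper, which indeed ends with a bound of the form $|\kappa''_i|\leq \dfrac{\om\cdot\nu_q}{\rho\,|\nu_\zeta\cdot\nu'_\zeta|\,|\nu_q\cdot\nu'_\zeta|^3}$, i.e.\ a bound proportional to $\om\cdot\nu_q$.

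The genuine gap is your last step. You correctly observe that the hypothesis $0\leq\nu_q\cdot\om\leq 1/4$ by itself cannot produce the factor $\de$ in $\mathcal K=4\de/\rho^2$, but your proposed remedy — deriving $|\nu_q\cdot\om|\lesssim\de/\rho$ from the nearest boundary point $p_0\in\pa\Sigma$, the touching balls at $p_0$ and the inclusion $\Om_m\subset\Om$ — does not work. The moving-plane inclusion only yields the sign condition $\nu\cdot\om\geq 0$ on the cap and its boundary; it does not force $S$ to cross $\pi_m$ nearly perpendicularly. When the critical position is attained through an interior tangency (case (i)), the crossing angle of $S$ with $\pi_m$ at points of $\pa\Sigma$ can be bounded away from $\pi/2$, so $\nu_{p_0}\cdot\om$ can be of order one no matter how small $\de$ is; note also that $\de$ measures the distance of $q$ from $\pa\Sigma$ and has nothing to do with the crossing angle at $p_0$. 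In the paper, the needed smallness of $\nu_q\cdot\om$ is \emph{not} proved inside this lemma at all: it is imported from Lemma \ref{lemma bound on nu1}, whose hypotheses require a point $\hat q\in\hat\Sigma$ with $\hat q=q-\al\nu_q$ and $|\nu_q-\nu_{\hat q}|\leq\al$ — that is, the quantitative $C^1$-closeness of $\Sigma$ and $\hat\Sigma$ obtained in Case 1 via the Harnack chain, hence ultimately the smallness of $\oscH$. Its proof compares $\nu_q$ with the normal at the reflected point $q^m\in S$ through $\nu_q\cdot\nu_{q^m}=1-2(\nu_q\cdot\om)^2$ and uses \eqref{bound on nu N+1} between $\hat q$ and $q^m$; the touching-ball geometry alone, without the $\al$-closeness input, gives nothing of this kind. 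So your plan as written would fail at exactly the step you identified as the main obstacle; the correct structure is to keep the conclusion of your Proposition-II step (a bound proportional to $\om\cdot\nu_q$) and let the $\de$-scale bound on $\om\cdot\nu_q$ enter from Lemma \ref{lemma bound on nu1} when the lemma is applied in Case 2.
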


\begin{proof} We notice that since $d_\Sigma(q,\pa \Sigma) = \de$ then $U' \neq \emptyset$.  Let $\zeta \in U'$ be arbitrary.  
Since the projection ${\rm pr}(\zeta)$ of $\zeta$ on $T_q\Sigma$ is in $\overline{B}_{\sqrt{2} \rho /8}$, from \eqref{bound on nu N+1} in Lemma \ref{lemma bounds rho} we know that
\begin{equation} \label{nup - nuq coroll}
 |\nu_q-\nu_\zeta| \leq \frac{1}{4} .
\end{equation}
Since $\nu_\zeta \cdot \om = \nu_q \cdot \om + (\nu_\zeta -\nu_q) \cdot \om$, we have that
\begin{equation} \label{nuq cdot om cor}
|\nu_\zeta \cdot \om | \leq \frac{1}{2} ,
\end{equation}
which implies that $\pi_m$ intersects $U^*_{\sqrt{2} \rho /8}(q)$ transversally,
and so $U''$ is a hypersurface of $T_q\Sigma$. 
Since the principal curvatures of $S$ are bounded by $1/\rho$, \eqref{bound curvatures II} implies that the principal curvatures of $U''$ satisfy
\begin{equation*}
|\kappa''_i({\rm pr}(\zeta))|  \leq \frac{1}{\rho |\nu_\zeta \cdot \nu_\zeta'|} \cdot \frac{ \om\cdot \nu_q }{ [(\omega\cdot\nu_q)^2+(\nu_q\cdot \nu'_\zeta)^2]^{3/2}}\,,\quad i=1,\dots,n-1,
\end{equation*}
where $\nu'$ is the Gauss map of $U'$ viewed as a hypersurface of $\pi_m$ satisfying 
\begin{equation}
\label{eq lemma curv A}
\nu_\zeta\cdot \nu'_\zeta  = 1-  (\nu_\zeta\cdot \om )^2\, .
\end{equation} 
Hence,
\begin{equation} \label{k'' leq}
|\kappa''_i({\rm pr}(\zeta))|  \leq \frac{ \om\cdot \nu_q }{\rho |\nu_\zeta \cdot \nu_\zeta' | \,  |\nu_q\cdot \nu'_\zeta|^{3}}\,,\quad i=1,\dots,n-1\,. 
\end{equation}
From \eqref{nuq cdot om cor} and \eqref{eq lemma curv A}, we obtain that 
\begin{equation} \label{nuova}
\nu_\zeta\cdot \nu'_\zeta \geq \frac 3 4  \, .
\end{equation}
By writing 
\begin{equation*}
\nu_q \cdot \nu'_\zeta= (\nu_q-\nu_\zeta) \cdot \nu'_\zeta + \nu_\zeta \cdot \nu'_\zeta,
\end{equation*}
and using \eqref{nup - nuq coroll} and \eqref{nuova}
we get
\begin{equation*}
\nu_q \cdot \nu'_\zeta \geq  \frac{1}{2} ,
\end{equation*}
and from \eqref{k'' leq} and \eqref{nuova} we conclude. 
\end{proof}


In the next lemma
we give a bound which will be useful in the sequel.

\begin{lemma} \label{lemma bound on nu1 II}
Let $q$ and $\al$ be as in Lemma \ref{lemma bound on nu1}. Then, we have that
\begin{equation} \label{nu ell cdot om leq}
0 \leq \nu_\zeta \cdot \om \leq \sqrt{8 \frac{\de^2}{\rho^2} + \frac{\alpha}{2}} + \frac{\sqrt{2}}{\rho} d_\Sigma(q,\zeta),
\end{equation}
for any $\zeta \in\overline{ U}_{\rho}(q)$, where $U_\rho(q)$ is defined as in \eqref{U al bordo}.
\end{lemma}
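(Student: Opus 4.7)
The plan is to write $\nu_\zeta \cdot \omega = \nu_q \cdot \omega + (\nu_\zeta - \nu_q) \cdot \omega$, control the first term via Lemma~\ref{lemma bound on nu1} (whose hypotheses are exactly the hypotheses I am assuming here), and control the deviation $|\nu_\zeta - \nu_q|$ by the uniform touching-ball estimates of Lemma~\ref{lemma bounds rho} applied to the reflected surface $S^*$.

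For the lower bound in \eqref{nu ell cdot om leq}, $\zeta \in U_\rho(q) \subset \Sigma$ is a point of the reflected cap produced by the moving plane procedure at the critical position $m$. The same argument that gives $\nu_q \cdot \omega \geq 0$ in the proof of Lemma~\ref{lemma bound on nu1} applies verbatim to $\zeta$: the inclusion $\Omega_m \subset \Omega$ forces the inward normal to the reflected surface to have nonnegative $\omega$-component at every point of $\Sigma$.

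For the upper bound, I first invoke Lemma~\ref{lemma bound on nu1} to get $\nu_q \cdot \omega \leq \sqrt{8\delta^2/\rho^2 + \alpha/2}$. Next, since reflection about $\pi_m$ is an isometry of $\RR^{n+1}$, the surface $S^*$ enjoys the same uniform touching ball condition of radius $\rho$ as $S$, so Lemma~\ref{lemma bounds rho} applies to $S^*$ at $q$. By construction $U_\rho(q) \subset \mathcal{U}_\rho(q)$ (the latter taken inside $S^*$), so $\zeta$ can be written as $q + x + u(x)\nu_q$ with $x \in T_q\Sigma = T_q S^*$ and $|x| < \rho$. The bound \eqref{bound on nu N+1} then yields $|\nu_q - \nu_\zeta| \leq \sqrt{2}\,|x|/\rho$. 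Combining this with the trivial inequality $|x| \leq d_{S^*}(q,\zeta)$ from Lemma~\ref{lemma bound on d Ga} and the obvious $d_{S^*}(q,\zeta) \leq d_\Sigma(q,\zeta)$ (any path in $\Sigma \subset S^*$ is a path in $S^*$) gives
\[
|\nu_q - \nu_\zeta| \;\leq\; \frac{\sqrt{2}}{\rho}\, d_\Sigma(q,\zeta).
\]
A single application of Cauchy--Schwarz, $(\nu_\zeta - \nu_q) \cdot \omega \leq |\nu_\zeta - \nu_q|$, then assembles the two bounds into \eqref{nu ell cdot om leq}.

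I do not expect any real obstacle. The only point that needs attention is that $\Sigma \subset S^*$ rather than $\Sigma \subset S$, so the touching-ball estimates must be applied to the reflected surface; but since reflection preserves the value of $\rho$, this is automatic.
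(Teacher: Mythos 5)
Your proposal is correct and follows essentially the same route as the paper: the paper's proof is exactly the decomposition $\nu_\zeta\cdot\om \leq \nu_q\cdot\om + |\nu_\zeta-\nu_q|$, with the first term bounded by \eqref{nu p cdot om leq} and the second by \eqref{bound on nu N+1}, the lower bound being immediate from the moving-plane construction. Your extra remarks (that the touching-ball estimate is applied on the reflected surface $S^*$, which has the same $\rho$, and that $|x|\leq d_\Sigma(q,\zeta)$) simply make explicit what the paper leaves implicit.
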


\begin{proof}
%
Let $\zeta \in \overline{U}_\rho(q)$. By construction we have that $\nu_\zeta \cdot \om \geq 0$. Since 
\begin{equation*}
\nu_\zeta \cdot  \om \leq \nu_q \cdot \om + |\nu_\zeta - \nu_q|,
\end{equation*}
from  \eqref{bound on nu N+1} and \eqref{nu p cdot om leq} we conclude.
\end{proof}

\noindent Now we are ready to prove Theorem \ref{thm approx symmetry 1 direction} for Case 2.
Let 
$$
\ep_3= \frac{\de}{\rho C_1} 
$$
where $C_1$ is given by \eqref{C1}.
We assume that $d_\Sigma (p_0, \pa \Sigma) \geq \de$ and $d_\Sigma (p, \pa \Sigma) < \de$. By arguing as in Case 1, we have that $\Sigma^\de$ is connected. Let $q \in \Sigma$ and $\bar p \in \pa \Sigma$ be such that
$$d_\Sigma (p, q) +  d_\Sigma (p,\pa \Sigma)  = \de ,$$
and
$$d_\Sigma (p, \bar p) = d_\Sigma (p,\pa \Sigma)$$
(we notice that our choice of $\de$ implies that $q$ and $\bar p$ exist).

Since $ d_\Sigma (q,\pa \Sigma) = \de$, from Case 1 we have that there exists $\hat q \in \hat \Sigma$ such that
\begin{equation} \label{case 2 diff q}
|q-\hat q| + |\nu_q-\nu_{\hat q}|\leq C_1 \oscH
\end{equation}
(see formula \eqref{harnack step final}).
From the proof of Case 1, it is clear that $\hat q$ can be chosen as
\begin{equation*}
\hat q= q-\al \nu_q,
\end{equation*}
for some $0 \leq \al \leq C_1 \oscH$. Let
\begin{equation} \label{r case 2}
r= \frac \rho 8.
\end{equation}
We define the sets $U_{r}(q)\subset \Sigma$, $E_{r}\subseteq B_{r}\cap  T_{q}\Sigma$,  and the map $u:E_r \to \RR$ as in \eqref{U al bordo} with $q$ in place of $p$. Since $\hat q \in \hat \Sigma \subset S$ and $|\nu_q-\nu_{\hat q}|\leq C_1 \oscH$, from Lemma \ref{lemma change normal} we have that $S$ can be locally written (around $\hat q$) as a graph of function $\hat u$ over $T_q \Sigma \cap B_{\rho \sqrt{1-C_1^2 \ep_3^2}}$ and in particular over $T_q \Sigma \cap B_{r}$ (which is justified by our choice of $\ep_3$). 

We notice that Lemma \ref{lemma bound on d Ga} implies that $p,\bar p \in \overline{U}_r(q)$.
Let $ \pa E_r$ be the boundary of $E_r$ in $T_q \Sigma$  and let $\bar x \in \pa E_r$ be the projection of $\bar p$. Since $d_\Sigma(q , \bar p) = \de$, from Lemma \ref{lemma bound on d Ga} we have that
\begin{equation} \label{x0 geq leq}
\rho\sin \frac{\de}{\rho} \leq |\bar x| \leq \de.
\end{equation}
Let $U'=U^*_r(q) \cap \pi_m$ and let $U''$ be  the projection of $U'$ onto $T_{q}\Sigma$ (as in Lemma \ref{coroll curvatures}). Notice that by definition $U''$  is contained in $\partial E_r$ and, in particular, $u=\hat u$ on $U''$. 
From Lemma \ref{lemma bound on nu1} and Lemma \ref{coroll curvatures}, we have that the principal curvatures of $U''$ are uniformly bounded by $\mathcal{K}$.
We notice that our choice of $\de$ implies that $\mathcal{K} \leq \frac{1}{16 \rho} $.

\begin{figure}
\begin{tikzpicture} [scale=1.5]
\fill[gray!20!white] ({0.28+0.4*cos(88)}, {0.15+0.4*sin(88)})-- ({0.28+0.4*cos(260)}, {0.15+0.4*sin(260)}) ..  controls({0.28+0.4*cos(180)}, {0.15+0.4*sin(180)},0.15)..({0.28+0.4*cos(88)}, {0.15+0.4*sin(88)}) ;
\fill[gray!20!white] ({0.28+0.4*cos(260)}, {0.15+0.4*sin(260)})-- ({0.28+0.4*cos(88)}, {0.15+0.4*sin(88)}) arc  (88:260:4mm);
\node at (-2,1.8) { $T_q\Sigma$};
\node at (-1.5,0.6) { $E_r$};
\node at (0.2,1.6) { $U''$};
\node at (-0.43,1.3) {\footnotesize $4\delta$};
\node at (-0.38,0.4) {\footnotesize $2\delta$};
\filldraw [black] (0,0) circle (0.5 pt) node[left] {\footnotesize $O$};
\filldraw [black] (0.28,0.15) circle (0.5 pt) node[right] {\footnotesize $\bar x$};
\filldraw [black] ({0.28+0.8*cos(171)}, {0.15+0.8*sin(171)}) circle (0.5 pt) node[left] {\footnotesize $\bar y$};
\node at (0.16,-0.1) {\footnotesize $x$};
\filldraw [black] (0.15,0.05) circle (0.5 pt);
\draw [domain=100:260] plot ({2*cos(\x)}, {2*sin(\x)});
\draw [domain=88:260] plot ({0.28+0.4*cos(\x)}, {0.15+0.4*sin(\x)});
\draw [domain=107:255] plot ({0.28+1.6*cos(\x)}, {0.15+1.6*sin(\x)});
\draw (0.28,0.15) -- ({0.28+1.5*cos(115)}, {0.15+1.5*sin(105)});
\draw (0.28,0.15) --({0.28+0.8*cos(171)}, {0.15+0.8*sin(171)});
\draw ({2*cos(100)}, {2*sin(100)}) ..  controls(1/2,1/2).. ({2*cos(260)}, {2*sin(260)});
\draw({0.28+0.8*cos(171)}, {0.15+0.8*sin(171)}) circle (0.8);
\end{tikzpicture}
\caption{Case 2 in the proof of Theorem \ref{thm approx symmetry 1 direction}. The shadow region is the set $B_{\de}(\bar x) \cap E_r$.}
\label{fig case 2}
\end{figure}


Let $x$ be the projection of $p$ over $T_q\Sigma $. From \eqref{x0 geq leq} we have that $B_{4\de}(\bar x) \cap \pa E_r \subset U''$ and we can apply Lemma \ref{lemma Carleson} and obtain that
\begin{equation} \label{vivaLuigi}
\sup_{B_{\de}(\bar x) \cap E_r} (u-\hat u) \leq K_2 \left((u-\hat u)(\bar y)+ \oscH \right),
\end{equation}
with $\bar y = \bar x + 2\de \nu''_{\bar x}$, where $\nu''_{\bar x}$ is the interior normal to $U''$ at $\bar x$  (see Figure \ref{fig case 2}). We notice that $x \in B_{\de}(\bar x) \cap E_r$ and then from \eqref{vivaLuigi} we have that
\begin{equation} \label{bound case2 final}
(u-\hat u)(x) \leq K_2 \left((u-\hat u)(\bar y)+ \oscH \right).
\end{equation}
Since $2\de < \mathcal{K}^{-1}$, the point $\bar y$ has distance $2\de$ from the boundary of $E_r$ and, from Lemma \ref{lemma bound on d Ga} we have that the point 
$$
\bar q = q + \bar y + u(\bar y) \nu_q
$$ 
is such that 
$$
d_\Sigma(\bar q,\pa \Sigma) \geq 2\de \, .
$$ 
Hence, from Case 1 (applied to $p_0$ and $\bar q$) we obtain the estimate
$$
(u-\hat u)(\bar y) \leq  C_1 \oscH,
$$
and from \eqref{bound case2 final} we get 
\begin{equation*} 
(u-\hat u)(x) \leq C_1  K_2 \oscH.
\end{equation*}
By letting $\hat p = q + x + \hat u(x) \nu_q$, and since $d_\Sigma(p,\pa \Sigma) >0$, a standard application of Lemma \ref{lemma Harnack interior} and Lemma \ref{lemma diff normali} yield the estimate
$$
|p-\hat p| + |\nu_p - \nu_{\hat p}| \leq \frac{\sqrt{5}}{2} C_1 K_1 K_2 \oscH,
$$
and we complete the proof of Case 2.

\subsubsection{Case 3: $0 < d_\Sigma(p_0,\pa \Sigma) < \de$.}
Since $p_0$ is the tangency point, it is easy to show that the center of the interior touching sphere of radius $\rho$ to $S$ at $p_0$ lies in the half-space $\{q\in \RR^{n+1}\,:\,q\cdot \om \leq m\}$ (see for instance \cite[Lemma 2.1]{CMV}). From this, and being
$$|p_0 \cdot \om - m | \leq d_{\Sigma}(p_0, \pa \Sigma) \leq \de,$$
from Lemma \ref{lemma bound on nu1} (with $\al=0$) we have that
\begin{equation*}
\nu_{p_0} \cdot \om \leq 3\frac  \de\rho.
\end{equation*}

As for Case 2 (with $q$ replaced by $p_0$), we locally write $\Sigma$ and $\hat \Sigma$ as graphs of function $u, \hat u\colon E_r\to \RR$, respectively, where $E_r \subseteq T_{p_0} \Sigma$ is defined as in the introduction to this subsection, and $r$ is given by \eqref{r case 2}. Moreover, we denote by $U''$ the portion of $\pa E_r$ which is obtained by projecting ${U}^*_r(p_0) \cap \pi_m$ onto $T_{p_0} \Sigma$. We remark that $u=\hat u$ on $U''$ and that the principal curvatures of $U''$ are bounded by $\mathcal{K}$.

\begin{figure}
\begin{tikzpicture} [scale=1.5]
\fill[gray!20!white] ({-0.51}, {0}) circle (0.8);
\fill[white]({-0.51}, {0}) circle (0.4);
\node at (-2,1.8) { $T_{p_0}\Sigma$};
\node at (-1.6,0.6) { $E_r$};
\node at (0.1,1.6) {$U''$};
\node at (-0.7,0.6) {\footnotesize $2\delta$};
\filldraw [black] (0,0) circle (0.5 pt) node[above] {\footnotesize $O$};
\filldraw [black] (0.29,0) circle (0.5 pt) node[right] {\footnotesize $\bar x$};
\filldraw [black] ({0.29-0.8}, {0}) circle (0.5 pt) node[below] {\footnotesize $ y$};
\draw [domain=100:260] plot ({2*cos(\x)}, {2*sin(\x)});
\draw (0.29,0)  --({0.29-0.8}, {0});
\draw ({2*cos(100)}, {2*sin(100)}) ..  controls(1/2,0).. ({2*cos(260)}, {2*sin(260)});
\draw({-0.51}, {0}) circle (0.8);
\draw({-0.51}, {0}) -- ({-0.51+0.8*cos(130)},{0.8*sin(130)});
\draw({-0.51}, {0}) circle (0.4);
\draw({-0.51}, {0}) -- ({-0.51+0.4*cos(200)},{0.4*sin(200)})node[left] {\footnotesize $\delta$};;
\end{tikzpicture}
\caption{Case 3 in the proof of Theorem \ref{thm approx symmetry 1 direction}.}
\label{fig case 3}
\end{figure}

Let $\bar x \in U''$ be a point such that 
$$|\bar x| = \min_{x \in U''} |x|.$$
Notice that $|\bar x| \leq d_\Sigma (p_0, \pa \Sigma) < \de$. Let $\nu''_{\bar x}$ be the interior normal to $U''$ at $\bar x$,  and set
$$
y=\bar x +2\de \nu''_{\bar x}
$$
(see Figure \ref{fig case 3}). We notice that the principal curvatures of $U''$ are bounded by $\mathcal{K}$ and $2\de \leq \mathcal{K}^{-1}$ and the ball $B_{2\de}(y)\cap T_{p_0}\Sigma$ is contained in $E_r$ and tangent to $U''$ at $\bar x$, with $\nu_{\bar x}''= - \bar x / |\bar x|$. Hence, the origin $O$ of $T_{p_0} \Sigma$ (i.e. the projection of $p_0$ over $T_{p_0} \Sigma$) lies in the annulus $ (B_{2\de}(y) \setminus B_{\de}(y))\cap T_{p_0}\Sigma$. Hence, we can apply \eqref{Hopf A} in Lemma \ref{lemma hopf} (there we set: $x_0=\bar x$, $c=y$ and $r=2\de$) and, since $u(0)=\hat u (0)$, we find that
\begin{equation} \label{u - hat u case 3}
\|u-	\hat u\|_{C^1(B_{\de /2}(y) \cap T_{p_0} \Sigma)} \leq K_3 \oscH.
\end{equation}
Let 
$$
q=p_0+y+u(y) \nu_{p_0},\quad \textmd{and }\ \hat q=p_0+y+\hat u (y) \nu_{p_0}.
$$ 
We notice that from \eqref{u - hat u case 3} and Lemma \ref{lemma diff normali} we have that 
$$ |q-\hat q| + |\nu_q - \nu_{\hat q}| \leq \frac{\sqrt 5}{2} K_3 \oscH.$$
Since $y$ has distance $2\de$ from $\pa E_{r}$, then $d_\Sigma (q,\pa \Sigma) \geq 2 \de$, and we can apply Cases 1 and 2 to conclude.

\subsubsection{Case 4: $p_0 \in \pa \Sigma$.}
This case is the limiting case of Case 3 for $d_\Sigma(p_0,\pa \Sigma) \to 0$. Indeed, in this case we can write $\Sigma$ and $\hat \Sigma$ as graphs of functions over a half-ball on $T_{p_0} \Sigma$. Hence the argument used in Case $3$ can be adapted easily by using \eqref{Hopf B} instead of \eqref{Hopf A}. 

\subsection{Steps 2-3. Approximate radial symmetry and conclusion}
We consider $n+1$ orthogonal directions $e_1,\ldots,e_{n+1}$, and we denote by $\pi_1,\ldots,\pi_{n+1}$ the corresponding critical hyperplanes. Let 
$$
\mathcal{O} = \bigcap_{i=1}^{n+1} \pi_i,
$$
and denote by $\mathcal{R}(p)$ the reflection of $p$ in $\mathcal{O}$. We have the following Lemma which extends Theorem \ref{thm approx symmetry 1 direction}.

\begin{lemma} \label{lemma approx radial symmetry}
For any point $p \in S$ there exists a point $q\in S$ such that 
$$|\mathcal{R}(p) - q|  \leq (n+1)C \oscH. $$
\end{lemma}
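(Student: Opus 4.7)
The plan is to build $q$ by iterating the one-direction approximate symmetry from Theorem \ref{thm approx symmetry 1 direction} across each of the $n+1$ orthogonal hyperplanes $\pi_1,\ldots,\pi_{n+1}$. The key algebraic observation is that, since these hyperplanes are mutually orthogonal and all pass through $\mathcal{O}$, the reflections $R_1,\ldots,R_{n+1}$ about them pairwise commute and their composition is the point reflection in $\mathcal{O}$:
\[
\mathcal{R} = R_{n+1}\circ R_n\circ \cdots \circ R_1.
\]
This reduces the problem to iterating a single-hyperplane closeness statement.

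First I would promote Theorem \ref{thm approx symmetry 1 direction} to the following uniform one-hyperplane statement: for each $i\in\{1,\ldots,n+1\}$ and every $p\in S$ there exists $q'\in S$ with $|q'-R_i(p)|\leq C\,\osc(H)$. When $p$ lies in the right-hand cap of $S$ with respect to $e_i$, the point $R_i(p)$ is precisely a point $p^{m_i}$ of the reflected cap $\Sigma_i$, and Theorem \ref{thm approx symmetry 1 direction} directly furnishes a point $\hat p\in\hat\Sigma_i\subset S$ within $C\,\osc(H)$ of $R_i(p)$. When $p$ lies instead in the complementary part $\hat S_{m_i}\subset S$, the analogous statement is obtained by applying the theorem with the opposite direction $-e_i$; the constants $\ep$ and $C$ in Theorem \ref{thm approx symmetry 1 direction} are independent of orientation, so this exchange of the roles of the two caps again produces a point of $S$ within $C\,\osc(H)$ of $R_i(p)$.

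Next I would iterate. Set $q_0:=p$ and, for $k=1,\ldots,n+1$, use the previous step applied to the point $q_{k-1}\in S$ to pick $q_k\in S$ with
\[
|q_k-R_k(q_{k-1})|\leq C\,\osc(H).
\]
Since every $R_j$ is an isometry, a routine induction based on the triangle inequality gives
\[
|q_k-R_k\circ R_{k-1}\circ\cdots\circ R_1(p)|\leq k\,C\,\osc(H),\qquad k=0,1,\ldots,n+1.
\]
At $k=n+1$ the composition on the right collapses to $\mathcal{R}(p)$, and setting $q:=q_{n+1}\in S$ yields the desired bound $|q-\mathcal{R}(p)|\leq (n+1)\,C\,\osc(H)$.

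The main obstacle is the first step, namely the uniform \emph{two-sided} one-hyperplane statement. Theorem \ref{thm approx symmetry 1 direction}, as formulated, transports a point of the reflected cap $\Sigma$ to a nearby point of $S$ in the complementary portion $\hat\Sigma$, but to run the iteration one must be able to start from an arbitrary point of $S$ and produce a point of $S$ close to each of its $n+1$ reflections. Verifying that the theorem yields the symmetric statement with constants independent of the orientation of $\om$, and that these constants mesh with the construction of the common intersection point $\mathcal{O}$ in Step 2, is the technical heart of the argument.
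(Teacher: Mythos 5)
Your argument is essentially the paper's own proof: the paper's entire argument consists of writing $\mathcal{R}=\mathcal{R}_{n+1}\circ\cdots\circ\mathcal{R}_{1}$ and iterating Theorem \ref{thm approx symmetry 1 direction} $n+1$ times, and your triangle-inequality induction, using that each reflection is an isometry, is exactly the intended bookkeeping and yields the stated constant $(n+1)C$. The one place where you go beyond the paper --- and where your specific device does not work as stated --- is the reduction to a two-sided one-hyperplane statement. For a point $p$ lying in the left portion $\hat S_{m_i}$ you propose to apply Theorem \ref{thm approx symmetry 1 direction} in the direction $-e_i$; but the moving-plane procedure in the direction $-e_i$ produces in general a \emph{different} critical hyperplane from $\pi_i$, so that application controls the distance to the reflection about that other hyperplane, not to $\mathcal{R}_i(p)$. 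Closeness of the two critical hyperplanes associated with $\pm e_i$ is, in the scheme of \cite{ABR} followed here, a consequence of estimate \eqref{estimate conclusion}, which is derived after (and using) the present lemma, so it cannot be invoked at this stage without circularity. What the iteration actually requires is the statement that for each $i$ \emph{every} point of $S$ has a point of $S$ within $C\oscH$ of its reflection about $\pi_i$; Theorem \ref{thm approx symmetry 1 direction} as stated provides this only for points whose reflection lies in the connected component $\Sigma$ of the reflected cap containing the tangency point, and the reverse closeness (every point of $\hat\Sigma$ lies within $C\oscH$ of $\Sigma$, equivalently that the symmetric set $\Sigma\cup\mathcal{R}_i(\Sigma)$ approximates $S$ from both sides) needs a separate, if routine, justification. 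The paper's two-line proof leaves exactly this point, together with the issue of other connected components, implicit; you correctly identify it as the remaining verification, but the $-e_i$ shortcut you suggest is not the way to close it. Apart from this shared gap, your proof coincides with the paper's.
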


\begin{proof}
We write $\mathcal{R}$ as 
$$
\mathcal{R}= \mathcal{R}_{n+1} \circ \cdots \circ \mathcal{R}_{1}, 
$$
where $\mathcal{R}_i$ is the reflection about $\pi_i$, $i=1,\ldots,N+1$. By iterating Theorem \ref{thm approx symmetry 1 direction} $n+1$ times, we conclude. 
\end{proof}

As in \cite[Proposition 6]{ABR} we have that, for every direction $\om$, it holds that 
\begin{equation} \label{estimate conclusion}
\dist(\mathcal{O},\pi_m) \leq C \oscH,
\end{equation}
where $\pi_m$ is the critical hyperplane in the direction $\om$ and $C$ is a constant that depends only on $\rho$ and $\diam S$, where
$$
\diam S = \max_{p,q \in S} |p-q|.
$$
We notice that $\diam S$ can be bounded in terms of $|S|$ and $\rho^{-1}$. Indeed, let $p,q \in S$ be such that $|p-q|= \diam S$. By arguing as in the proof of Lemma \ref{lemma piecewise geodesic}, we can find a piecewise geodesic path on $S$ joining $p$ and $q$, and with length bounded by \eqref{L number def} (with $\de=\rho/2$ there), and then
\begin{equation*}
\diam S \leq \frac{|S| 2^{2n}}{\om_n \rho^n}.
\end{equation*}
Hence, the constant $C$ in \eqref{estimate conclusion} can be bounded in terms of the dimension $n$ and upper bounds on $\rho^{-1}$ and $|S|$.

%

Finally, the bound on the difference of the radii \eqref{stability radii} of the approximating balls is obtained by arguing as in \cite[Proposition 7]{ABR}. Indeed, we define 
$$
r_i= \min_{p \in S} |p-\mathcal{O}|, \quad  \textmd{and }\ \ r_e= \max_{p \in S} |p-\mathcal{O}|,
$$
assume that the minimum and maximum are attained at $p_i$ and $p_e$, respectively, we obtain that 
$$
r_e-r_i \leq 2 \dist(\mathcal{O},\pi),
$$
where $\pi$ is the critical hyperplane in the direction 
$$\frac{p_e-p_i}{|p_e-p_i|}.$$
From \eqref{estimate conclusion} we conclude.

\section{Proof of Corollary \ref{main2}} \label{section proof main2}
\begin{lemma}\label{pre}
Let $S$ be a closed $C^2$ hypersurface embedded in $\RR^{n+1}$ and assume 
$$ 
S \subset \overline{B}_{r_e} \setminus B_{r_i},
$$
with $r_e-r_i \leq 2\rho$.
%
Then
$$
\frac{p}{|p|}\cdot \nu_p\leq -1+\frac{1}{\rho}(r_e-r_i)
$$
for every $p\in S$.
\end{lemma}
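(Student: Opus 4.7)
The plan is to exploit the interior touching ball of radius $\rho$ at $p$, combined with the inclusion $S\subset \overline{B}_{r_e}\setminus B_{r_i}$, and then to perform some elementary algebra.

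First, I would fix $p\in S$ and consider the interior touching ball $B_\rho(c^-)$. Since $\nu_p$ is the inward normal, $c^-=p+\rho\nu_p$. Because the relatively compact domain $\Omega$ bounded by $S$ is contained in $\overline{B}_{r_e}$ (indeed, the hypothesis of the lemma together with the fact that $\Omega$ is the interior of $S$ forces $\Omega\subset \overline{B}_{r_e}$), we have $B_\rho(c^-)\subset \overline{B}_{r_e}$. This implies $|c^-|\leq r_e-\rho$, that is,
\begin{equation*}
|p+\rho\nu_p|^2 \leq (r_e-\rho)^2 .
\end{equation*}
Expanding the left-hand side and simplifying yields the key inequality
\begin{equation*}
p\cdot \nu_p \leq \frac{r_e^2-|p|^2}{2\rho}-r_e .
\end{equation*}

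Next, I would use the two-sided bound $r_i\leq |p|\leq r_e$ coming from $p\in \overline{B}_{r_e}\setminus B_{r_i}$ to estimate
\begin{equation*}
r_e^2-|p|^2=(r_e-|p|)(r_e+|p|)\leq (r_e-r_i)(r_e+|p|) .
\end{equation*}
Substituting this into the inequality above and dividing by $|p|>0$ (which is legitimate since $|p|\geq r_i>0$), the problem reduces to showing that
\begin{equation*}
\frac{(r_e-r_i)(r_e+|p|)}{2\rho|p|}-\frac{r_e}{|p|} \leq -1+\frac{r_e-r_i}{\rho} .
\end{equation*}
A direct computation of the difference of the two sides gives the numerator
\begin{equation*}
(r_e-|p|)\bigl[(r_e-r_i)-2\rho\bigr] ,
\end{equation*}
which is non-positive because $|p|\leq r_e$ and the hypothesis $r_e-r_i\leq 2\rho$. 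This closes the argument.

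There is no serious obstacle: the only subtlety is remembering that $\nu_p$ points inward so that the interior touching ball is centred at $p+\rho\nu_p$, and noticing that the hypothesis $r_e-r_i\leq 2\rho$ is exactly what is needed in the last step to absorb the factor $(r_e-r_i)-2\rho$. The remainder is a squared-triangle inequality plus routine algebra.
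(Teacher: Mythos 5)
Your proof is correct, and it takes a genuinely leaner route than the paper. The paper's argument uses \emph{both} touching balls at $p$: it writes $c^\pm=p\mp\rho\nu_p$ (exterior and interior centers), derives $|c^-|+\rho\le r_e$ from $B_\rho(c^-)\subset\overline B_{r_e}$ and $\inf_{q\in B_\rho(c^+)}|q|\ge r_i$ from the exterior ball, subtracts the squared inequalities so that the cross terms produce $4\rho\, p\cdot\nu_p$, and then estimates $|c^-|+|c^+|\ge 2|p|$ and $r_e\le |p|+(r_e-r_i)$ before invoking $r_e-r_i\le 2\rho$. You instead use only the interior ball: from $B_\rho(p+\rho\nu_p)\subset\Omega\subset\overline B_{r_e}$ you get $|p+\rho\nu_p|\le r_e-\rho$ (squaring is legitimate since $r_e-\rho\ge|c^-|\ge 0$ is automatic), expand to get $p\cdot\nu_p\le\frac{r_e^2-|p|^2}{2\rho}-r_e$, and then feed in only the elementary facts $r_i\le|p|\le r_e$; the hypothesis $r_e-r_i\le 2\rho$ enters exactly once, in the sign of the final numerator $(r_e-|p|)\bigl[(r_e-r_i)-2\rho\bigr]$, and I checked that computation — it is right. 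What your approach buys is that you never need the exterior touching ball, and in particular you never need the inequality $\inf_{q\in B_\rho(c^+)}|q|\ge r_i$, which is the one step of the paper's proof that tacitly uses the position of $B_{r_i}$ relative to the surface rather than just $p\in\overline B_{r_e}\setminus B_{r_i}$; your only input from the annulus condition is $|p|\ge r_i$ applied to $p$ itself. Two cosmetic remarks: the containment $\Omega\subset\overline B_{r_e}$ deserves the one-line justification (the complement of $\overline B_{r_e}$ is connected, disjoint from $S$, and unbounded, hence lies in the exterior component), and you do not actually need $r_i>0$ — the statement already presupposes $|p|>0$ through $p/|p|$, which is all the division requires.
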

\begin{proof}
Let $p\in S$ and let $c^-$ and $c^+$ be the centers of the interior and the exterior touching balls of radius $\rho$ tangent at $p$, respectively. Then
$$
\left|c^-+\frac{c^-}{|c^-|}\rho\right|=\sup_{q\in B_\rho(c^-)}|q|\leq r_e \,,\quad \left|c^+-\frac{c^+}{|c^+|}\rho\right|=\inf_{q\in B_\rho(c^+)}|q|\geq r_i,
$$
and so
$$
\left|c^-+\frac{c^-}{|c^-|}\rho\right|^2-\left|c^+-\frac{c^+}{|c^+|}\rho\right|^2\leq r_e^2-r_i^2\,.
$$
Therefore
$$
|c^-|^2+2\rho|c^-|-|c^+|^2+2\rho |c^+|\leq r_e^2-r_i^2\,.
$$
Taking into account that
$$
c^+=p-\rho \nu_p\,,\quad c^-=p+\rho \nu_p,
$$
we get
$$
4\rho\,p\cdot \nu_p+2\rho(|c^-|+|c^+|)\leq  r_e^2-r_i^2,
$$
and so
$$
\frac{p}{|p|}\cdot \nu(p)\leq - \frac{|c^-|+|c^+|}{2|p|}+\frac{r_e+r_i}{4\rho|p|}(r_e-r_i)\,.
$$
Since
$$
|c^-|+|c^+|\geq |c^-+c^+|=2|\rho|,
$$
and
$$
r_e=r_i+(r_e-r_i)\leq |p|+(r_e-r_i),
$$
we have that 
$$
\frac{p}{|p|}\cdot \nu_p\leq -1 +\frac{r_e-r_i}{2\rho}+\frac{(r_e-r_i)^2}{4\rho^2}\leq -1 +\frac{r_e-r_i}{\rho}\,,
$$
as required.
\end{proof}

Now we are ready to prove Corollary \ref{main2}.

\smallskip
\begin{proof}
\emph{Step 1: $S$ is diffeomorphic to a sphere}. 
In view of Theorem \ref{main}, there exists $\tilde \ep$ and $C$ such that if 
${\rm osc}(H)<\tilde\ep$, then \eqref{Bri Om Bre} and \eqref{stability radii} hold. We may assume the concentric balls $B_{r_e}$ and $B_{r_i}$ centred in the origin.
Let 
\begin{equation}\label{SUKA}
\ep =\min\left\{\tilde \ep,\frac{\rho}{2C}\right\}\,.
\end{equation}
Hence the assumptions in Lemma \ref{pre} are satisfied. We consider the map
$
\varphi\colon S\to \partial B_{r_i},
$
defined by
$$
\varphi(p)=r_i\frac{p}{|p|}\,.
$$
We show that $\varphi$ a diffeomorphism.  It is clear that $\varphi$ is
smooth. Since $B_{r_i}$ is contained in the bounded domain enclosed by $S$, then $\varphi$ is surjective. Indeed, if $\zeta \in \partial B_{r_i}$, then
$$
\begin{cases}
\dist_S(\zeta) \leq 0\,, \\
\dist_S((r_e-r_i)\zeta) \geq 0, 
\end{cases}
$$
and, by continuity, there exists a $t\geq 0$ such that $\dist_S((1+t)\zeta)=0$, i.e. $\zeta \in \varphi(S)$. Hence, assumption \eqref{H quasi const} plays a role only for proving the injectivity of $\varphi$. Let $p,q\in S$ and assume by contradiction that $\varphi(p)=\varphi(q)$. Then we may assume that $|p|<|q|$. Let $c^+=p-\rho \nu_p$ be the center of the exterior touching ball to $S$ at $p$. Since $p/|p|=q/|q|$, we have
$$
|q-c^+|^2=\left|(|q|-|p|)\frac{p}{|p|}+\rho \nu(p)\right|^2=(|q|-|p|)^2+\rho^2+2\rho (|q|-|p|)\frac{p}{|p|}\cdot \nu_p\,.
$$
From Lemma \ref{pre} and since $|q|-|p|\leq r_e-r_i$, we have that
$$
|q-c^+|^2\leq (r_e-r_i)^2+\rho^2+2\rho(r_e-r_i)\left(-1+\frac{r_e-r_i}{\rho}\right)=\rho^2-(r_e-r_i)\left(2 \rho - 3 (r_e-r_i)\right).
$$
The choice of $\ep$, as in \eqref{SUKA} implies that $|q-c^+|<\rho$ which gives a contradiction.

\smallskip 
\emph{Step 2: proof of \eqref{Lipschitz_bound}}. We denote by $F\colon \partial B_{r_i}\to S$ the inverse of the map $\varphi\colon S\to \partial B_{r_i}$ considered in the first step.  We can write $F(\zeta)=\zeta+\Psi(\zeta)\frac{\zeta}{r_i}$ for every $\zeta$ in $\partial B_{r_i}$ and from Step 1 and Theorem \ref{main} it follows that $\|\Psi\|_{C^0(\partial B_{r_i})} \leq C \oscH$. In order to prove a quantitative bound on the $C^0$-norm of the derivatives of $\Psi$, we work in the same fashion as in the proof of Lemma \ref{lemma change normal}. 

Let $\zeta$ be a fixed point on $\partial B_{r_i}$ and set $p=F(\zeta)$ (i.e. $\zeta=r_ip/|p|$). Let $T_{\zeta}$ and $T_{p}$ be the tangent spaces to $\partial B_{r_i}$ at $\zeta$ and to $S$ at $p$, respectively.  
 We can locally write $S$ around $p$ as 
$$
q=p+x+u(x)\nu_p \,,
$$ 
where $x$ belongs to a small neighborhood of the origin $O$ and $u$ is a $C^2$ map satisfying $u(O)=0$ and $\nabla u(O)=0$.  Without loss of generality, we can assume that $\zeta=r_{i}e_{n+1}$ so that 
$$
T_{\zeta}=\{x\in \RR^{n+1}\,\,:\,\, x_{n+1}=0\} \,,
$$
and we locally write $\partial B_{r_i}$ as $\zi'=\zi + x + \eta(x) \nu_\zi$, where $\eta(x)=r_i-\sqrt{r_i^2-|x|^2}$. 

As in the proof of Lemma \ref{lemma change normal}, we can chose an orthogonal matrix $A\in {\rm SO}(n+1)$ satisfying $A(\zeta)=-r_i\nu_p$ (we recall that $\nu_\zi= -\zi/r_i$), and we can locally write 
\begin{equation}\label{finaleligure}
p+Ax+u(Ax)\nu_p=p+x+v(x)\nu_{\zeta}\,;
\end{equation}
furthermore, $A$ is such that 
\begin{equation} \label{A-Id}
|A-I| \leq 2 \sqrt{1-\nu_\zi \cdot \nu_p} \,.
\end{equation}
We firstly prove that 
\begin{equation} \label{abba}
\partial_{x_k}\psi(O)=-\frac{1}{r_i}\partial_{x_k} v(O) \,,\quad k=1,\dots,n \,.
\end{equation}
Indeed, by setting $\psi=\Psi\circ \eta$, we have  
$$
p+x+v(x) \nu_\zi =\eta(x) - \psi(x) \nu_{\eta(x)} \,,
$$ 
which implies 
$$
p\cdot  \nu_{\eta(x)}+x\cdot \nu_{\eta(x)}+v(x) \nu_\zi \cdot \nu_{\eta(x)}-\eta(x)\cdot \nu_{\eta(x)}=-\psi(x)\,,
$$
i.e.
$$ 
\frac{1}{r_i}p\cdot \eta(x)+\frac{1}{r_i}x\cdot \eta(x)+\frac{1}{r_i}v(x) \nu_\zi \cdot \eta(x)-r_i=\psi(x)\,,
$$
where we have used that $\nu_{\eta(x)}=-\eta(x)/r_i$.
From $\eta(O)=\zeta$ and $v(O)=0$ we obtain \eqref{abba}. 

Now, we give a bound on the derivatives of $v$ at $O$ and in terms of the difference $r_e-r_i$. We notice that \eqref{finaleligure} implies 
$$
v(x)=(A-I)x\cdot \nu_{\zeta}+u(Ax)\nu_p\cdot\nu_\zeta \,,
$$
and, since $|\nabla u(O)|=0$, we obtain that 
$$
|\partial_{x_k} v(O)| \leq |A-I |  \,,\quad k=1,\dots,n \,.
$$
From \eqref{A-Id} and Lemma \ref{pre} we obtain that
$$
|\partial_{x_k} v(O)| \leq 2 \sqrt{\frac{r_e-r_i}{\rho}} \,,\quad k=1,\dots,n \,,
$$
and from \eqref{stability radii} and \eqref{abba} we find \eqref{Lipschitz_bound} and we conclude.

\end{proof}

\begin{remark} \label{remark ros} \normalfont
As emphasized in the Introduction, if we assume that $\rho$ is not bounded from below, it is possible to construct a family of closed surfaces embedded in $\RR^3$, not diffeomorphic to a sphere, with $\osc(H)$ arbitrarly small and such that \eqref{stability radii} fails. For instance one can consider the following example, suggested us by A. Ros, done by gluing almost pieces of unduloids. 

\begin{figure}[h]
\begin{tikzpicture}
\draw [thick,domain=8:262] plot ({cos(\x)}, {sin(\x)});
\draw [thick,domain=278:352] plot ({cos(\x)}, {sin(\x)});

\draw [thick,domain=98:352] plot ({cos(\x)}, {-2.1+sin(\x)});
\draw [thick,domain=8:82] plot ({cos(\x)}, {-2.1+sin(\x)});

\draw [thick,domain=-82:172] plot ({2.1+cos(\x)}, {sin(\x)});
\draw [thick,domain=188:262] plot ({2.1+cos(\x)}, {sin(\x)});

\draw [thick,domain=98:172] plot ({2.1+cos(\x)}, {-2.1+sin(\x)});
\draw [thick,domain=188:360+82] plot ({2.1+cos(\x)}, {-2.1+sin(\x)});

\draw [thick] ({cos(8)}, {sin(8)}) ..  controls(2.1/2,0).. ({2.1+cos(172)},{sin(172)});
\draw [thick] ({cos(-8)}, {sin(-8)}) ..  controls(2.1/2,0).. ({2.1+cos(188)},{sin(188)});

\draw [thick] ({cos(262)}, {sin(262)}) ..  controls(0,-2.1/2).. ({cos(98)},{-2.1+sin(98)});
\draw [thick] ({cos(-8)}, {sin(-8)}) ..  controls(2.1/2,0).. ({2.1+cos(188)},{sin(188)});
\draw [thick] ({cos(278)}, {sin(278)}) ..  controls(0,-2.1/2).. ({cos(82)},{-2.1+sin(82)});
\draw [thick] ({cos(8)}, {-2.1+sin(8)}) ..  controls(2.1/2,-2.1).. ({2.1+cos(172)},{-2.1+sin(172)});
\draw [thick] ({cos(-8)}, {-2.1+sin(-8)}) ..  controls(2.1/2,-2.1).. ({2.1+cos(188)},{-2.1+sin(188)});
\draw [thick] ({2.1+cos(82)}, {-2.1+sin(82)}) ..  controls(2.1,-2.1/2).. ({2.1+cos(-82)},{sin(-82)});
\draw [thick] ({2.1+cos(98)}, {-2.1+sin(98)}) ..  controls(2.1,-2.1/2).. ({2.1+cos(-98)},{sin(-98)});
\end{tikzpicture}
\end{figure}

\end{remark}


\begin{thebibliography}{99}


\bibitem{ABR} {\sc A.~Aftalion, J.~Busca, W.~Reichel},
Approximate radial symmetry for overdetermined boundary value problems,
{\em Adv. Diff. Eq.}  {\bf 4} no. 6 (1999), 907--932.

\bibitem{Al0} {\sc A.~D.~Alexandrov}, Uniqueness theorems for surfaces
in the large II, {\em Vestnik Leningrad Univ.} {\bf 12}, no. 7 (1957), 15--44.
(English translation: {\em Amer. Math. Soc. Translations, Ser. 2}, {\bf 21} (1962),
354--388.) 

\bibitem{Al1} {\sc A.~D.~Alexandrov}, Uniqueness theorems for surfaces
in the large V, {\em Vestnik Leningrad Univ.} {\bf 13}, no. 19 (1958), 5--8.
(English translation: {\em Amer. Math. Soc. Translations, Ser. 2}, {\bf 21} (1962),
412--415.)

\bibitem{Al2}  {\sc A.~D.~Alexandrov}, A characteristic property of spheres, {\em Ann. Mat. Pura Appl.}, {\bf 58} (1962), 303--315.

\bibitem{All} {\sc W.~K.~Allard}, On the first variation of a varifold, {\em Ann. Math.}, {\bf 95} (1972), 417--491.

\bibitem{Alm} {\sc F.~J.~Jr.~Almgren}, {\em Plateaus problem. An invitation to varifold geometry}, Mathematics
Monograph Series, W. A. Benjamin, Inc., New York-Amsterdam, 1966.

\bibitem{Ar} {\sc R.~Arnold}, On the Alexandrov-Fenchel Inequality and the Stability of the Sphere, {\em Monatsh. Math.}, {\bf 155 (1993)}, 1--11.

\bibitem{BCN} {\sc H.~Berestycki, L.~A.~Caffarelli, L.~Nirenberg}, Inequalities for second-order elliptic equations with applications to unbounded domains I, {\em Duke Math. J.}, {\bf 81} (1996), no. 2, 467--494.

\bibitem{BNST} {\sc B.~Brandolini, C.~Nitsch, P.~Salani, C.~Trombetti}, On the stability of the Serrin problem, {\em J. Diff. Equations} {\bf 245} (2008), 1566--1583.


\bibitem{Br} {\sc S.~Brendle}, Constant mean curvature surfaces in warped product manifolds, {\em Publ. Math. Inst. Hautes \'Etudes Sci.} {\bf 117} (2013), 247--269.

\bibitem{BE} {\sc S.~Brendle, M.~Eichmair}, Isoperimetric and Weingarten surfaces in the Schwarzschild manifold, {\em J. Diff. Geom.}, {\bf 94} (2013), 387--407.

\bibitem{CFSW} {\sc X.~Cabr\'e, M.~Fall, J.~Sola-Morales, T.~Weth}, Curves and surfaces with constant nonlocal
mean curvature: meeting Alexandrov and Delaunay. Preprint, 2015. {\tt Arxiv:1503.00469.}

\bibitem{CGS} {\sc L.~Caffarelli, B.~Gidas, J.~Spruck}, Asymptotic symmetry and local behavior of semilinear elliptic equations
with critical Sobolev growth. {\em Comm. Pure Appl. Math.}, {\bf 42} (1989), 271--297.

\bibitem{CS}  {\sc L.~Caffarelli, S.~Salsa}, {\em A geometric approach to free boundary problems.} Graduate Studies in Mathematics, 68. American Mathematical Society, Providence, RI, 2005.

\bibitem{CY} {\sc D.~Christodoulou, S.~T.~Yau}, Some remarks on the quasi-local mass, {\em Contemp. Math.} {\bf 71} (1988), 9--14.

\bibitem{CFMN} {\sc G.~Ciraolo, A.~Figalli, F.~Maggi, M.~Novaga}, Rigidity and sharp stability estimates for hypersurfaces with constant and almost-constant nonlocal mean curvature. To appear in {\em J. Reine Angew. Math.} (Crelle's Journal). {\tt Arxiv:1503.00653.}

\bibitem{CirMag} {\sc G.~Ciraolo, F.~Maggi}, On the shape of compact hypersurfaces with almost constant mean curvature. Preprint. {\tt ArXiv:1503.06674.}

\bibitem{CMS} {\sc G.~Ciraolo, R.~Magnanini, S.~Sakaguchi}, Symmetry of solutions of elliptic and parabolic equations with a level surface parallel to the boundary, {\em J. Eur. Math. Soc. (JEMS)} {\bf 17} (2015), 2789--2804.

\bibitem{CMS2} {\sc G.~Ciraolo, R.~Magnanini, S.~Sakaguchi}, Solutions of elliptic equations with a level surface parallel to the boundary: stability of the radial configuration, to appear in {\em J. Analyse Math}. {\tt ArXiv:1307.1257.}

\bibitem{CMV}  {\sc G.~Ciraolo, R.~Magnanini, V.~Vespri}, H\"{o}lder stability for Serrin's overdetermined problem. Preprint. {\tt ArXiv:1410.7791.}

\bibitem{CMV2}  {\sc G.~Ciraolo, R.~Magnanini, V.~Vespri}, Symmetry and linear stability in Serrin's overdetermined problem via the stability of the parallel surface problem. Preprint. 

\bibitem{DeLMul1} {\sc C.~De Lellis, S.~M\"{u}ller}, Optimal rigidity estimates for nearly umbilical surfaces.
{\em J. Differential Geom.} {\bf 69} (2005), no. 1, 75--110.  

\bibitem{DeLMul2}  {\sc C.~De Lellis, S.~M\"{u}ller}, A $C^0$ estimate for nearly umbilical surfaces.
{\em Calc. Var. Partial Differential Equations} {\bf 26} (2006), no. 3, 283--296.

\bibitem{DCL} {\sc M.~P.~Do Carmo, H.~B.~Lawson}, On the Alexandrov-Bernstein Theorems in
Hyperbolic Space, {\em Duke Math Journal} {\bf 50} (1983), 995--1003.

\bibitem{GNN} {\sc B.~Gidas, W.~M.~Ni, L.~Nirenberg}, Symmetry and related properties via the maximum principle, {\em Comm. Math. Phys.}, {\bf 68} (1979), 209--243.

\bibitem{GT} {\sc D.~Gilbarg, N.~S.~Trudinger}, {\em Elliptic partial differential equations of second order}, Springer-Verlag, Berlin-New York, 1977.

\bibitem{Gr} {\sc M.~Gromov}, Stability and pinching. {\em Geometry Seminars. Sessions on Topology and Geometry of Manifolds},  (Bologna, 1990), Univ. Stud. Bologna, Bologna, 1992, 55--97. 

\bibitem{HTY} {\sc W.-Y.~Hsiang, Z.-H.~Teng, W.-C.~Yu}, New examples of constant mean curvature immersions of $(2k-
1)$-spheres into Euclidean $2k$-space, {\em Ann. of Math. (2)} {\bf 117} (1983), 609--625.

\bibitem{HY} {\sc G.~Huisken, S.-T.~Yau}, Definition of center of mass for isolated physical systems and unique foliations by stable spheres with constant mean curvature, {\em Invent. Math.} {\bf 124} (1996), no. 1-3, 281--311.

\bibitem{Ko} {\sc N.~J.~Korevaar}, Sphere theorems via Alexsandrov for constant Weingarten curvature hypersurfaces - Appendix to a note of A.~Ros, {\em J. Diff. Geom.}, {\bf 27} (1988), 221--223.

\bibitem{Kol} {\sc P.~Kohlmann}, Curvature measures and stability, {\em J. Geom.}, {\bf 68} (2000), 142--154.

\bibitem{Kou} {\sc D.~Koutroufiotis}, Ovaloids which are almost Spheres, {\em Comm. Pure Appl. Math.}, {\bf 24} (1971), 289--300.
    
\bibitem{KKS} {\sc N.~J.~Korevaar, R.~Kusner, B.~Solomon}, The structure of complete embedded surfaces with constant
mean curvature, {\em J. Diff. Geom.}, {\bf 30} (1989), 465--503.
    
\bibitem{KMPS} {\sc N.~J.~Korevaar, R.~Mazzeo, F.~Pacard, R.~Schoen.} Refined asymptotics for constant scalar curvature metrics with isolated singularities, {\em Invent.Math.}, {\bf 135} (1999), 233--272 .

\bibitem{La} {\sc U.~Lang}, Diameter Bounds for Convex Surfaces with Pinched Mean Curvature, {\em Manuscripta Math.}, {\bf 86} (1995), 15--22.


\bibitem{Li1} {\sc C.~Li}, Mononicity and symmetry of solutions of fully nonlinear elliptic equations on bounded domains, 
{\em Commun. Part. Diff. Eq.}, {\bf 16} (1991), 491--526.

\bibitem{Li2} {\sc C.~Li}, Mononicity and symmetry of solutions of fully nonlinear elliptic equations on unbounded domains,
{\em Commun. Part. Diff. Eq.}, {\bf 16} (1991), 585--615.

\bibitem{Me} {\sc W.~Meeks III}, The topology and geometry of embedded surfaces of constant mean curvature, {\em J. Diff. Geom.}
{\bf 27} (1988), 539--552.

\bibitem{Mo} {\sc S.~Montiel}, Unicity of constant mean curvature hypersurfaces in some Riemannian
manifolds, {\em Indiana Univ. Math. J.}, {\bf 48} (1999), 711--748.

\bibitem{Moo} {\sc J.~D.~Moore}, Almost Spherical Convex Hypersurfaces, {\em Trans. Amer. Math. Soc.}, {\bf 180} (1973), 347--358.

\bibitem{MR} {\sc S.~Montiel, A.~Ros}, {\em Compact hypersurfaces: The Alexandrov theorem for higher order mean curvatures},
{\rm Pitman Monographs and Surveys in Pure and Applied Mathematics} {\bf 52} (1991), 279--296.

\bibitem{Re} {\sc R.~Reilly}, Applications of the Hessian operator in a Riemannian manifold, {\em Indiana Univ. Math. J.}, {\bf 26} (1977), 459--472.


\bibitem{Ro1} {\sc A.~Ros}, Compact hypersurfaces with constant scalar curvature and a congruence theorem, {\em J. Diff. Geom.}, {\bf 27} (1988), 215--220.

\bibitem{Ro2} {\sc A.~Ros}, Compact hypersurfaces with constant higher order mean curvatures, {\em Rev. Math. Iber.}, {\bf 3} (1987), 447--453.

\bibitem{Se} {\sc J.~Serrin}, A symmetry problem in potential theory, {\em Arch. Rational Mech. Anal.} {\bf 43} (1971), 304-318.

\bibitem{Schn} {\sc R.~Schneider}, A Stability Estimate for the Alexandrov-Fenchel Inequality, with an Application to Mean Curvature, {\em Manuscripta Math.}, {\bf 69} (1990), 291--300.

\bibitem{Sc} {\sc R.~Schoen}, Uniqueness, symmetry, and embeddedness of minimal surfaces, {\em J. Diff. Geom.}, {\bf 18} (1983), 791--809.

\bibitem{We} {\sc H.~C.~Wente}, Counterexample to a conjecture of H. Hopf, {\em Pacific J. Math.} {\bf 121} (1986), 193--243.

\bibitem{Ya} {\sc S.-T.~Yau}, Submanifolds with constant mean curvature I, {\em Amer. J. Math.} {\bf 96} (1974), 346--366





\end{thebibliography}
\end{document}